\def\grad{\text{grad}}
\def\bar{\overline}
\def\tilde{\widetilde}
\def\inter{\text{int}}
\def\Src{\mathcal S}
\def\Rec{\mathcal R}
\title[Inverse problem for wave equation]{Inverse problem for the Riemannian wave equation 
with Dirichlet data and Neumann data on disjoint sets}
\author{Matti Lassas and Lauri Oksanen}
\address{University of Helsinki, Department of Mathematics and Statistics, P.O. Box 68 FI-00014}
\date{\today}
\subjclass{Primary: 35R30}
\keywords{Inverse problems, Riemannian wave equation, Dirichlet-to-Neumann map, partial data}
\begin{document}
\begin{abstract}
We consider the inverse problem to determine 
a smooth compact Riemannian manifold with boundary $(M, g)$ 
from a restriction $\Lambda_{\Src, \Rec}$ of the Dirichlet-to-Neumann operator
for the wave equation on the manifold.
Here $\Src$ and $\Rec$ are open sets in $\p M$ and 
the restriction $\Lambda_{\Src, \Rec}$ corresponds to the case 
where the Dirichlet data is supported on $\R_+\times \Src$
and the Neumann data is measured on $\R_+\times \Rec$.
In the novel case where $\bar \Src \cap \bar \Rec = \emptyset$,
we show that $\Lambda_{\Src, \Rec}$ determines the manifold $(M,g)$ uniquely,
assuming that the wave equation is exactly controllable from the set of sources $\Src$. 
Moreover, we show that the exact controllability can be replaced by the Hassell-Tao condition
for eigenvalues and eigenfunctions, that is,
\begin{align*}
\lambda_j \le C \norm{\p_\nu \phi_j}_{L^2(\Src)}^2, \quad j =1, 2, \dots,
\end{align*}
where $\lambda_j$ are the Dirichlet eigenvalues and 
$(\phi_j)_{j=1}^\infty$ is an orthonormal basis of the corresponding eigenfunctions.
\end{abstract}
\maketitle

\section{Introduction}

Let $(M, g)$ be a smooth, connected and compact Riemannian manifold with boundary $\p M$.
We consider the wave equation with Dirichlet data $f \in C_0^\infty((0, \infty) \times \p M)$,
\begin{align}
\label{eq_wave}
&(\p_t^2 - \Delta_g )u(t,x) = 0, &\text{in $(0, \infty) \times M$},
\\\nonumber &u|_{(0, \infty) \times \p M} = f, &\text{in $(0, \infty) \times \p M$},
\\\nonumber &u|_{t = 0} = \p_t u|_{t = 0} = 0, &\text{in $M$},
\end{align}
and denote by $u^f = u(t,x)$ the solution of (\ref{eq_wave}).
For open and nonempty sets $\Src, \Rec \subset \p M$ and $T \in (0, \infty]$
we define the restricted Dirichlet-to-Neumann operator,
\begin{align*}
\Lambda_{M,g,\Src, \Rec}^T : f \mapsto \p_\nu u^f|_{(0,T) \times \Rec}, 
\quad f \in C_0^\infty((0, T) \times \Src).
\end{align*}
Often we write $\Lambda_{\Src, \Rec}^T=\Lambda_{M,g,\Src, \Rec}^T$
and $\Lambda_{\Src, \Rec} = \Lambda_{\Src, \Rec}^\infty$.
When $f$ is regarded as a boundary source, the operator $\Lambda_{\Src, \Rec}^T$ models 
boundary measurements for the wave equation with sources producing
the waves on $(0, T) \times \Src$
and the waves being observed on $(0, T) \times \Rec$.  
We consider the inverse boundary value problem to determine $(M,g)$ from $\Lambda_{M,g,\Src, \Rec}^T$.

%Depending on if $T=\infty$ or $T<\infty$, the inverse problem is called the infinite or finite time
%problem. 
%The largest possible data set for the inverse problem,
%called the complete data, is the case when
%$\Src$ and $\Rec$ are both equal to $\p M$ and $T=\infty$. 
%The case when
%$\Src=\Rec$ is a proper subset of $\p M$,
%that is, one
%can observe fields on the same area where one can control sources, is called the local data
%inverse problem.  In the partial data inverse problems,
% $\Src$ and $\Rec$ are  assumed to be non-intersecting.
%All these type of problems are encountered in mathematical
%physics and various applications.  For example, in medical imaging
%and geophysical imaging of Earth measurements can often
%be done only a part of the boundary
%and thus the inverse problem needs to be solved with local data.
%Very often one can not make observation on the boundary values fields
%on the set  on can control sources, implying that one has to solve
%the inverse problem only with the partial data.
%For example in oil exploration, one uses explosives as sources,
%and hence it is difficult to measure waves very near the sources.

A problem of this type is often called a complete boundary data problem 
if $\Src = \Rec = \p M$ and 
a partial boundary data problem if $\Src \ne \p M$ or $\Rec \ne \p M$. A sub-class of the partial boundary data problems
are the local data problems where  $\Src= \Rec \ne \p M$. 
The inverse problems with local data and 
the analogous partial boundary data problems with 
 $\Src \cap  \Rec \ne \emptyset$
have been studied broadly and
we will give below a brief review of this literature.
On the contrary, problems with $\overline \Src \cap \overline \Rec = \emptyset$,
that is, problems with disjoint partial data,
have remained open to large extent. 
We are aware of only two previous results:
in \cite{Rakesh2000} Rakesh proved that the coefficients of a wave equation 
on a one-dimensional interval are determined by 
boundary measurements with 
sources supported on one end of the interval and the waves observed on the other end,
and in \cite{Imanuvilov2011a} Imanuvilov, Uhlmann, and Yamamoto proved that 
the potential of a Schr\"odinger equation 
on a two-dimensional domain homeomorphic to a disc, where the boundary
is partitioned into eight clockwise-ordered parts $\Gamma_1,\Gamma_2,\dots,\Gamma_8$, is determined by boundary measurements with sources supported on $\Src=\Gamma_2\cup\Gamma_6$ and fields observed on $\Rec=\Gamma_4\cup\Gamma_8$.

Inverse problems with  partial boundary data  are encountered in mathematical
physics and in various applications.  For example in medical imaging
and in the geophysical imaging of the Earth, measurements can usually
be done only a part of the boundary.
Often it is not possible to observe fields on the same area where sources are controlled.
For example in oil exploration, explosives are used as sources
and hence it is difficult to measure waves near the sources.
Also, many inverse scattering problems, such as the transmission problems
on a line, are equivalent to disjoint partial data problems.

%The study of problems with disjoint partial data is motivated by applications
%as it is not always possible to observe fields on the same area where sources are controlled.
%For example in oil exploration, explosives are used as sources
%and hence it is difficult to measure waves near the sources.

In this paper we consider the problem with $\overline \Src \cap \overline \Rec = \emptyset$
and show that the inverse problem to determine $(M, g)$ given
%the differentiable structure of a neighborhood of $\bar \Src \cup \bar \Rec$ in $\p M$ and the operator 
$\Lambda_{\Src, \Rec}$
has unique solution if the wave equation (\ref{eq_wave})
is exactly controllable from $\Src$.
We say that $(M,g)$ is exactly controllable from $\Src$ in time $T_0 > 0$ if the map
\begin{eqnarray}
\label{def_exact_controllability}
& &\mathcal U:
L^2((0,T_0) \times \Src) \to L^2(M) \times H^{-1}(M),
\\ & &\nonumber \mathcal U(f )=(u^f(T_0), \p_t u^f(T_0))
\end{eqnarray}
is surjective.
The condition by Bardos, Lebeau and Rauch gives a geometric characterization 
of exact controllability \cite{Bardos1992, Burq1997}.
In particular, if $M$ has a strictly convex boundary, 
then exact controllability is valid when every geodesic, 
continued by normal reflection on the boundary and having
length $T_0$, intersects $\Src$. 
We refer to \cite{Bardos1992} for the precise formulation of the geometric condition in the case that $\p M$ is non-convex.

For our purposes the exact controllability can be replaced by a spectral condition
that is strictly weaker in terms of the size of $\Src$.
Namely, let us denote the Dirichlet eigenvalues of $-\Delta_g$ by
\begin{align*}
0 < \lambda_1 < \lambda_2 \le \lambda_3 \dots \to \infty
\end{align*}
and the corresponding $L^2(M)$-normalized 
eigenfunctions by $\phi_j$. 
That is,
$(\phi_j,\phi_k)_{L^2(M)} = \delta_{jk}$ and 
\begin{align*}
- \Delta_g \phi_j = \lambda_j \phi_j\quad\hbox{on }M, 
\quad \phi_j|_{\p M} = 0.
\end{align*}
We say that the manifold $(M,g)$
satisfies the Hassell-Tao condition
for eigenvalues and eigenfunctions with the set $\Src\subset \p M$ if 
 there is $C_0>0$ such that for all orthonormal bases $(\phi_j)_{j=1}^\infty$ of eigenfunctions
\begin{align}
\label{def_spectral_cond}
\lambda_j \le C_0 \norm{\p_\nu \phi_j}_{L^2(\Src)}^2, \quad \hbox{for all }j =1, 2, \dots. 
\end{align}
%Let us point out that the
%condition (\ref{def_spectral_cond}) is assumed to hold for all orthonormal bases of
%eigenfunctions in the case that some of the eigenvalues $\lambda_j$ are degenerated.
%
%Our main result is that when the Hassell-Tao condition
% is satisfied, 
%the differentiable structure of $\bar \Src \cup \bar \Rec$ in $\p M$ and the operator  
%$\Lambda_{M,g,\Src, \Rec}^\infty$ determine the isometry type of the Riemannian
%manifold 
%$(M,g)$ uniquely.

\begin{figure}[t]
\begin{subfigure}[t]{3cm}
\centering
\def\svgwidth{3cm}
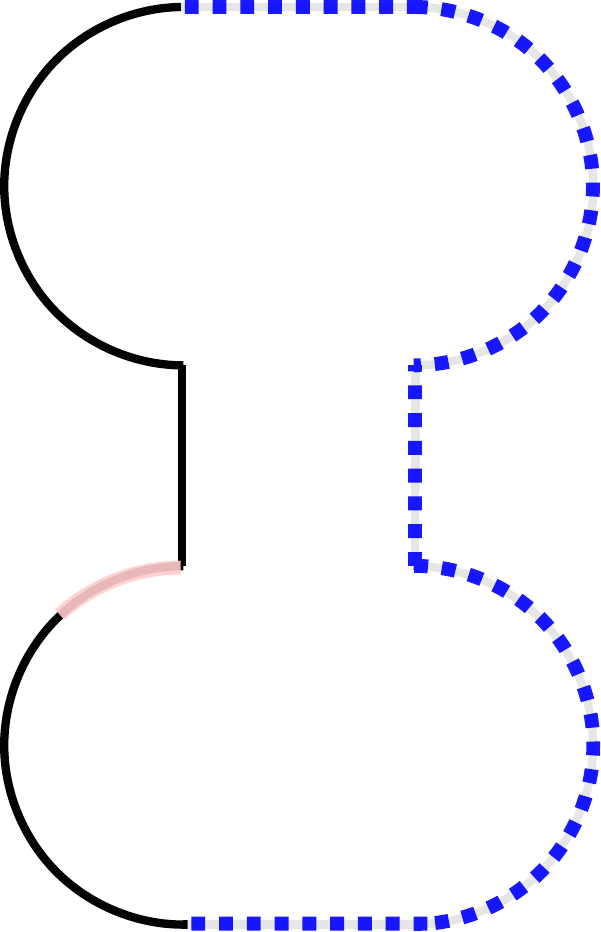
\end{subfigure}
\qquad
\begin{subfigure}[t]{3cm}
\centering
\def\svgwidth{2cm}
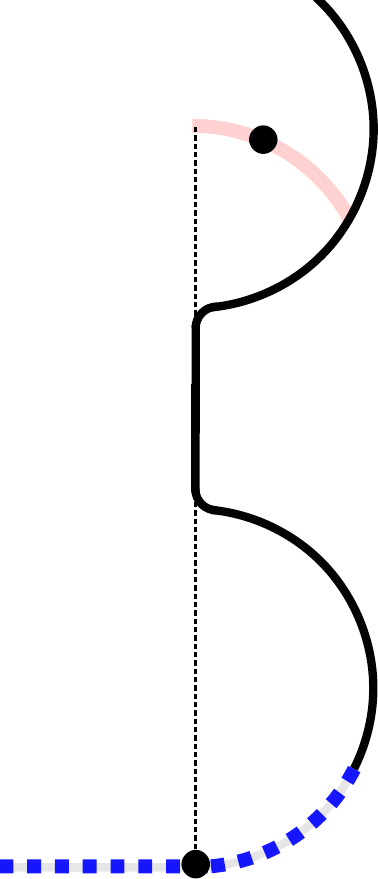
\end{subfigure}
\caption{
{\em On left}, the dogbone region with the Euclidiean metric satisfies the assumptions of Theorem \ref{thm_main} when $\Src$ is the dashed blue part of the boundary and 
$\Rec$ is the light red part of the boundary, see \cite[Fig. 6]{Bardos1992}.
{\em On right}, a detail of the dogbone region: 
for any point $y$ on the light red arc, 
the point $x$ is the closest point to $y$ on the dashed blue part of the boundary.
We overcome the difficulties arising from non-convexity
by using modified boundary distance functions in Section \ref{sec_cut_locus}.
}
\end{figure}

We denote by $d(x, y)$, $x,y \in M$, the Riemannian distance function of $(M, g)$.
Our main result is the following. 
\begin{theorem}
\label{thm_main}
Let $(M_1,g_1)$ and $(M_2,g_2)$ be $C^\infty$-smooth compact and connected
Riemannian manifolds with boundary 
and let $\Src_j\subset \p M_j$ and $\Rec_j\subset \p M_j$
be open non-empty sets with smooth boundaries for $j=1,2$. 
Suppose that there are diffeomorphisms $\Phi: \bar \Src_1 \to \bar \Src_2$ and
$\Psi:\bar \Rec_1 \to \bar \Rec_2$ and that
(H) or (H') holds, where
\begin{itemize}
\item[(H)] $(M_j,g_j)$, $j=1,2$, is exactly controllable from $\Src_j$ in time $T_0 > 0$ 
and there is $T > T_0 + 2 \max_{j=1,2} \max_{x \in M_j} d(x, \Rec_j)$ such that
\begin{align*}
\Lambda_{M_1,g_1,\Src_1, \Rec_1}^{T} f=\Psi^*(\Lambda_{M_2,g_2,\Src_2, \Rec_2}^{T}(\Phi_*f))
\end{align*}
for all $f\in C^\infty_0((0, T) \times \Src_1)$.
\item[(H')] $(M_j,g_j)$, $j=1,2$, satisfies the Hassell-Tao condition (\ref{def_spectral_cond}) with the set $\Src_j$ and
\begin{align*}
\Lambda_{M_1,g_1,\Src_1, \Rec_1}^\infty f=\Psi^*(\Lambda_{M_2,g_2,\Src_2, \Rec_2}^\infty(\Phi_*f))
\end{align*}
for all $f\in C^\infty_0((0, \infty) \times \Src_1)$.
\end{itemize}
Then $(M_1,g_1)$ and $(M_2,g_2)$ are isometric and there is such an isometry $F:M_1\to M_2$
that $F|_{\Src_1}=\Phi$ and $F|_{\Rec_1}=\Psi$. 
\end{theorem}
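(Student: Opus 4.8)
The plan is to reconstruct each $(M_j,g_j)$ from its boundary distance functions measured from the observation set $\Rec_j$, and to show that exact controllability from $\Src_j$ (or the Hassell--Tao condition) is exactly what is needed to extract these functions from $\Lambda_{\Src,\Rec}$ despite the Dirichlet and Neumann data living on disjoint sets. Throughout, for a source $f$ on $(0,T)\times\Src$ let $u^f$ solve \eqref{eq_wave}, and for a source $h$ on $(0,T)\times\Rec$ let $v^h$ denote the solution with Dirichlet data $h$ on $\Rec$. The first step is to record the one identity that survives the disjointness of $\Src$ and $\Rec$: a mixed Blagovestchenskii formula. Differentiating $(u^f(t),v^h(s))_{L^2(M)}$ in $t$ and $s$ and applying Green's identity, the interior terms cancel and only boundary integrals remain; since $u^f$ vanishes on $\Rec$ and $v^h$ vanishes on $\Src$, these collapse to $\int_{\Rec}(\p_\nu u^f)\,h$ and $\int_{\Src} f\,(\p_\nu v^h)$. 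The first factor is the measured data $\Lambda_{\Src,\Rec}f$, and the second is the reciprocal data $\Lambda_{\Rec,\Src}h$, which is determined by $\Lambda_{\Src,\Rec}$ through time reversal. Hence all cross inner products $(u^f(t),v^h(s))$, for every $t$ and $s$, are computable from the hypotheses, and by varying $t,s$ and using $\p_t^2u^f=\Delta_g u^f$ for the free evolution we access the full dynamical pairing between controlled and receiver waves.

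The decisive step, which I expect to be the main obstacle, is to upgrade these cross pairings into genuinely geometric quantities. The obstruction is structural: Green's identity never produces the self inner products $(u^f,u^{f'})$ or $(v^h,v^{h'})$, as these would require $\Lambda_{\Src,\Src}$ and $\Lambda_{\Rec,\Rec}$, which are unavailable; worse, the cross pairings alone are invariant under a spurious gauge $u^f\mapsto U u^f$, $v^h\mapsto (U^*)^{-1}v^h$. This is exactly where exact controllability enters. Since the map $\mathcal U$ of \eqref{def_exact_controllability} is surjective, the controlled states exhaust $L^2(M)$, so every receiver state $v^h(s)$ is pinned down as a bounded functional by its computable pairings with all of $L^2(M)$; combined with the dynamical ($\Delta_g$- and $t$-intertwining) structure of the pairing and with finite speed of propagation, this rigidifies the gauge and lets one run the boundary control method as observed from $\Rec$. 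Concretely, for weights $\tau:\Rec\to[0,\infty)$ I would compute the volumes of the domains of influence $M(\Rec,\tau)=\{x:\,d(x,y)\le\tau(y)\text{ for some }y\in\Rec\}$ from the cross data, using that receiver waves with appropriately supported sources approximate $L^2(M(\Rec,\tau))$ (Tataru's unique continuation gives this approximate controllability unconditionally) while controllability from $\Src$ supplies the complementary probing states. The bound $T>T_0+2\max_{j,x}d(x,\Rec_j)$ guarantees that the time window is long enough for waves to reach the deepest points of $M_j$ from $\Rec_j$ and return, so these volumes, hence the distances to $\Rec_j$, are determined throughout $M_j$.

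From the family of domain-of-influence volumes one reads off the boundary distance representation relative to $\Rec_j$, namely $x\mapsto(d(x,y))_{y\in\Rec_j}$, for all $x\in M_j$. Here the non-convexity of the dogbone example is a genuine difficulty: the naive boundary distance functions fail to determine $M$ near the cut locus, and a geodesic realizing $d(x,\Rec_j)$ need not meet $\Rec_j$ transversally. I would resolve this with the modified boundary distance functions of Section~\ref{sec_cut_locus}, adapted to the closest-point geometry between $\Rec_j$ and $\Src_j$ shown in the figure, and then invoke the Katchalov--Kurylev--Lassas reconstruction, by which the modified boundary distance data determine $(M_j,g_j)$ up to isometry. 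Because $\Phi$ and $\Psi$ identify $\Src_1\cong\Src_2$ and $\Rec_1\cong\Rec_2$, and the equality of the restricted Dirichlet-to-Neumann operators forces the two reconstructions to coincide, the resulting isometry $F:M_1\to M_2$ satisfies $F|_{\Src_1}=\Phi$ and $F|_{\Rec_1}=\Psi$.

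For hypothesis (H') I would pass to the frequency domain: the temporal Fourier--Laplace transform of $\Lambda^\infty_{\Src,\Rec}$ has poles at the Dirichlet eigenvalues $\lambda_j$ with residues built from $\p_\nu\phi_j|_{\Rec}$ and $\p_\nu\phi_j|_{\Src}$, so $\Lambda^\infty_{\Src,\Rec}$ determines this boundary spectral data. The Hassell--Tao bound \eqref{def_spectral_cond} guarantees that $\p_\nu\phi_j|_{\Src}$ is quantitatively nondegenerate for every $j$, which is precisely the spectral substitute for exact controllability: it permits normalization and recovery of the complete boundary spectral data from $\Rec$ and reproduces the observability used above. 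Expressing the same volume and distance reconstruction through the spectral data then yields $(M_j,g_j)$ and the isometry $F$ as before.
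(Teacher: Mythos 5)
Your opening moves match the paper's: the cross (Blagovestchenskii) pairings between waves sent from $\Rec$ and waves sent from $\Src$, the recognition that self inner products $(u^f(T),u^{f'}(T))$ for sources on $\Rec$ are structurally unavailable, and the use of exact controllability (or the Hassell--Tao bound) to compensate. But the core of your reconstruction has a genuine gap. You claim to read off the boundary distance representation $x\mapsto (d(x,y))_{y\in\Rec_j}$ \emph{for all} $x\in M_j$ and then to conclude by a Katchalov--Kurylev--Lassas type argument. Neither half of this works. First, the data only yields inclusion relations between domains of influence (the paper's relation (\ref{boundary_distance_relation_Rec}), obtained via Lemma \ref{lem_domi_test} and the weak-convergence tests), and from these one can extract the distances $d(\gamma(s;y,\nu),z)$ only for $(s,y)$ with $s<\sigma_\Rec(y)$, i.e.\ only on the collar $M_\Rec$ swept by normal geodesics from $\Rec$ before they hit the boundary or the cut locus --- not on all of $M_j$. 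Second, even granting the full family $(d(x,y))_{y\in\Rec_j}$, a partial-boundary distance representation does not determine a manifold: two distinct points lying far beyond the cut locus of $\Rec$ (e.g.\ in a lobe of a dumbbell admitting a symmetry that fixes the lobe carrying $\Rec$) have identical distance functions on $\Rec$. The paper closes exactly this hole with an iterative continuation that your proposal lacks: reconstruct $(M_\Rec,g)$ in boundary normal coordinates, use it with Lemma \ref{lem_blago} to manufacture \emph{interior} data $\Lambda_{\Src,B}$ on a small ball $B\subset M_\Rec^\inter$, reconstruct the geodesic-normal piece $(M_B,g)$, show $\Lambda_{\Src,B}$ determines $\Lambda_{\Src,B'}$ for balls $B'\subset M_B$, and then iterate and glue these local charts, exhausting $M^\inter$ by connectedness (Section \ref{sec_global_reconstruction}). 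Without this bootstrapping, no argument reaches the part of $M_j$ invisible from $\Rec_j$.

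Two further steps you assert are not available as stated. (i) Volumes of domains of influence $M(\Rec,\tau)$ cannot be computed from the cross data: a volume computation needs norms of waves launched from $\Rec$, which is precisely what disjointness forbids; the paper works instead with inclusion relations detected through the equivalence in Lemma \ref{lem_domi_test}, with boundedness of the test sequences certified by Proposition \ref{prop_weak_boundedness} --- the sole place where (\ref{def_spectral_cond}) enters. (ii) In your (H$'$) paragraph, ``recovery of the complete boundary spectral data from $\Rec$'' runs into the very gauge you identified: the residues of the time Fourier transform of $\Lambda_{\Src,\Rec}$ give only the products of $\p_\nu\phi_{jk}|_{\Src}$ with $\p_\nu\phi_{jk}|_{\Rec}$ (summed over each eigenspace), and the normalization cannot be split between the two disjoint factors; the Hassell--Tao bound supplies a one-sided lower bound used for the weak-boundedness test, not normalized spectral data on $\Rec$. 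Relatedly, the paper does not treat (H) and (H$'$) in parallel: it proves that (H) implies (H$'$) --- exact controllability yields (\ref{def_spectral_cond}), and the time bound $T>T_0+2\max_x d(x,\Rec)$ is what allows $\Lambda^T_{\Src,\Rec}$ to be continued to $\Lambda^\infty_{\Src,\Rec}$ by unique continuation --- so only one reconstruction argument is needed.
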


Notice that $M^\inter$ is not assumed to be known a priori and
a part of the proof is to construct $M$ as a smooth manifold.
The same is true for $\p M \setminus \bar{\Src \cup \Rec}$.

While proving the above theorem, we develop a new geometric technique that we 
call modified boundary distance functions. This technique allows us to overcome difficulties arising from possible non-trans\-versality between the geodesic flow and the boundary, see Figure 1. Such difficulties are present in various inverse problems, see e.g. \cite{Isozaki2010, Stefanov2008}.

\subsection{Previous results}

The first results for inverse problems for the wave equation and  the equivalent inverse
problems 
 the heat or the Schr\"odinger equations go back to  the end of 50's
 when  Krein studied  the one-dimensional inverse problem for an inhomogeneous string,
$u_{tt}-c^2(x)u_{xx}=0$, see e.g. \cite{Kreuin1951}. 
In his works, causality was transformed into analyticity 
of the Fourier transform of the solution. Later, 
in the late 60's,  Blagovestchenskii \cite{Blagovevsvcenskiui1969, Blagovevsvcenskiui1971}
developed a method  to solve one dimensional inverse problems that was
based on finite speed of wave propagation.
In the late 80's  Belishev \cite{Belishev1987} developed the boundary control method 
by combining the finite speed arguments with control theory to
solve  inverse problems for the  wave equation in domains
of the Euclidean space $\R^n$.  
A turning point in the study of inverse problems 
for wave equation happened in 1995 when  
 Tataru  proved 
a Holmgren-type uniqueness theorem for wave equations with non-analytic coefficients
\cite{Tataru1995, Tataru1999}.
%Tataru's uniqueness theorem is essential also for the techniques in the present paper.
This and the earlier results on the boundary control method by Belishev and 
Kurylev \cite{Belishev1992}  solved the inverse problem for the wave equation on
a Riemannian manifold with complete data.
 For later development of 
the geometric boundary control method, see \cite{Anderson2004,Belishev2007, Katchalov1998, Katchalov2001, Kurylev1997}.
%We note that the boundary control method can be generalized also for Maxwell and Dirac
%equations under appropriate geometric conditions \cite{Kurylev2009, Kurylev2006}
%and its stability has been analyzed \cite{, Katsuda2007}.
In all the above results measurements were assumed to be
given either on the whole boundary or it was assumed that waves are observed on the same
sets where sources are supported, that is, $\Src=\Rec$ in our notation.
 For the wave equation, the problem where the closures of the source 
 domain $\Src$ and  the observation
 domain $\Rec$ do not intersect have been studied only in few papers, see \cite{Rakesh2011} and the references therein, typically in the one-dimensional or
radially symmetric cases. 
 In \cite{Lassas2010} we studied the case when there are three disjoint
 domains $\Sigma_1,\Sigma_2,\Sigma_3\subset \p M$ and assumed
 that
 all three Dirichlet-to-Neumann maps $\Lambda_{\Sigma_1,\Sigma_2}$,
 $\Lambda_{\Sigma_2,\Sigma_3}$, and $\Lambda_{\Sigma_3,\Sigma_1}$
 are known.

The steady state solutions of the wave equation satisfy an elliptic
equations and thus the inverse problems for elliptic equations
can be in many cases considered as special cases of hyperbolic
inverse problems with restricted data. A paradigm problem of this type is  Calder\'on's inverse problem \cite{Calder'on1980} that concerns 
the determination of the conductivity from the elliptic Dirichlet-to-Neumann map.
A smooth isotropic conductivity in a bounded
domain of $\R^n$, $n\geq 3$, is determined  by the elliptic Dirichlet-to-Neumann map
as was shown in the seminal paper of Sylvester and Uhlmann \cite{Sylvester1987}.
 In two dimensions the first unique identifiability
result was proven in \cite{Nachman1996} for $C^2$ conductivities and in \cite{Astala2006}
the problem was solved for $L^\infty$ conductivities.
The corresponding inverse problems
for the elliptic Schr\"odinger equation has
been solved in \cite{Bukhgeim2008, Sylvester1987}.
%The local data problem when the sources are on supported on a subset $\Gamma$ 
%of the boundary where also the fields are observed,
%that is, $\Src=\Rec=\Gamma$ in out notation, has been
%studied for the elliptic equation in 
%\cite{Kenig2007, Imanuvilov2010a, Imanuvilov2011}
%%\cite{KSU,IUY_JAMS,IUY_PNAS}. 
%%using Carleman estimates and complex geometric optics for quite general
%%elliptic equations in subdomains of the Euclidean space. 
% The inverse problem for the conductivity equation in the case
% when the sources and observations are supported
%on disjoint sets has currently been solved only
%for bounded domains $\Omega\subset  \R^2$ homeomorphic to
%a disc where the boundary
%$\p \Omega$ is divided to eight parts $\Gamma_1,\Gamma_2,\dots,\Gamma_8$,
%ordered counterclockwise, and the sources are supported on 
%$\Src=\Gamma_2\cup\Gamma_6$
%and the observations are done on  $\Rec=\Gamma_4\cup\Gamma_8$ \cite{Imanuvilov2011a}.
%
%
For anisotropic smooth conductivity (corresponding to a general Riemannian metric)
 in $\Omega\subset \R^n$, 
Calder\'on's inverse problem was solved in two
dimensions in \cite{Sylvester1990} using
the isotropic result \cite{Nachman1996}. The needed regularity
was reduced to $L^\infty$ later in \cite{Astala2005}.
In the case of dimension $n\ge 3$,
Calder\'on's inverse problem is of geometrical nature and makes sense for general
compact Riemannian manifolds with boundary, as was pointed out in \cite{Lee1989}.
This problem remains open, however, and we refer to \cite{DosSantosFerreira2009, Henkin2008, Lassas2003a} for partial results.

% and references in \cite{Greenleaf2009a}.

The partial data problem for the isotropic elliptic equation 
in $\Omega\subset \R^n$, $n \ge 3$, 
 under certain geometrical
restrictions on the sets $\Src$ and $\Rec$, 
 has been
solved by Kenig, Sj\"ostrand and Uhlmann \cite{Kenig2007}. 
Also,  the local data problem for anisotropic elliptic equations
 in $\Omega\subset \R^2$ was recently solved
 by Imanuvilov, Uhlmann and Yamamoto 
\cite{Imanuvilov2010a, Imanuvilov2011}.
For two-dimensional manifolds it was shown in \cite{Lassas2001} that
 the local boundary data  for the Laplace-Beltrami equation with $\Src=\Rec\ne \p M$ 
 determines uniquely
the manifold and the 
conformal class of the metric. Later the amount of needed measurements
have been reduced in 
\cite{Henkin2007, Henkin2008, Henkin2010, Henkin2011}.
For Riemannian surfaces the determination 
of the potential in a Riemann-Schr\"odinger equation
 with local data 
corresponding to $\Src=\Rec\ne \p M$  has been
solved in \cite{Guillarmou2011}.
The above mentioned \cite{Imanuvilov2011a}
is the only result with $\Src \cap \Rec = \emptyset$ concerning elliptic equations
that we are aware of.

\section{Outline of the arguments and notations}

We begin by showing in Section \ref{sec_controllability} that the condition (H) in Theorem \ref{thm_main}
implies the condition (H'). That is, we show that
the Hassell-Tao condition (\ref{def_spectral_cond})
is implied by the exact controllability (\ref{def_exact_controllability}),
and that the operator $\Lambda_{\Src, \Rec}^\infty$
is determined by the operator $\Lambda_{\Src, \Rec}^T$ when 
\begin{align}
\label{hypothesis_T}
T > T_0 + 2 \max_{x \in M} d(x, \Rec)
\end{align}
and (\ref{eq_wave}) is exactly controllable from $\Src$ in time $T_0$.
If we take $\Src = \p M$ and assume that $M$ is non-trapping, then (\ref{def_spectral_cond}) was proved 
by Hassell and Tao \cite{Hassell2002} without using exact controllability.
In the erratum for \cite{Hassell2002}, 
it was noted that exact controllability yields (\ref{def_spectral_cond})
and this observation was attributed to Nicolas Burq.
We give a short proof for this fact in Section \ref{sec_controllability} and show that (\ref{def_spectral_cond})
is strictly weaker than exact controllability in terms of the size of $\Src$. 

We will prove Theorem \ref{thm_main} in five steps that we will describe next.
We denote $SM := \{\xi \in TM;\ |\xi|_g = 1\}$, that is, $SM$ is the unit sphere bundle, and define
$\p_\pm S M := \{\xi \in \p SM;\ (\xi, \mp \nu)_g > 0 \}$,
where $\nu$ is the interior unit normal vector on $\p M$.
We define the {\em exit time} for $(x, \xi) \in SM \setminus \bar{\p_+ S M}$,
\begin{align*}
\tau_M(x, \xi) := \inf \{ s \in (0, \infty);\ \gamma(s; x, \xi) \in \p M \},
\end{align*}
where $\gamma(\cdot; x, \xi)$ is the geodesic with the initial data $\gamma(0) = x$, $\dot \gamma(0) = \xi$.
Moreover, we define the strip
\def\NN{\mathcal N}
\begin{align}\label{Matti's new formula 1}
\NN_\Rec &:= \{(s, y) \in (0, \infty) \times \Rec;\ s < \sigma_\Rec(y) \},
\\ \nonumber
\sigma_\Rec(y) &:= \max \{ s \in (0, \tau_M(y, \nu)];\ d(\gamma(s; y, \nu), \Rec) = s\},
\end{align}
and denote by $M_\Rec$ the image of $\NN_\Rec$ under the map 
\begin{align}
\label{boundary_normal_coords_s_y}
(s, y) \mapsto \gamma(s; y, \nu).
\end{align}
The first step is to show that $\Lambda_{\Src, \Rec}$
determines $(M_\Rec, g)$. 
That is, we reconstruct a piece of $(M,g)$ in the boundary normal coordinates. 
Notice that $\sigma_\Rec(y) > 0$, see e.g. \cite[p. 50]{Katchalov2001}. 

The second step is to show that $\Lambda_{\Src, \Rec}$ and $(M_\Rec, g)$
determine the map 
\begin{equation*}
\Lambda_{\Src, B} : f \mapsto u^f|_{(0,\infty) \times B}, 
\quad f \in C_0^\infty((0, \infty) \times \Src),
\end{equation*}
where $B \subset M_\Rec^\inter$ is a small ball sufficiently far away from 
the topological boundary of $M_\Rec$.
The first two steps can be thought as bootstrapping steps that give us the interior data $\Lambda_{\Src, B}$. The remaining three steps employ only interior data,
and we will reconstruct the unknown manifold $(M,g)$ 
by extending the known subset iteratively. 

We denote by $x$ the center of $B$ and define
\begin{align*}
\NN_B &:= (0, \sigma^B) \times S_x M,
\\
\sigma^B &:= \min_{\xi \in S_x M} 
\max \{ s \in (0, \tau_M(x, \xi)];\ d(\gamma(s; x, \xi), x) = s\}.
\end{align*}
We denote by $M_B$ the image of $\NN_B$ under the map 
\begin{align}
\label{normal_coords_s_xi}
(s, \xi) \mapsto \gamma(s; x, \xi).
\end{align}
The third step is to show that $\Lambda_{\Src, B}$
determines $(M_B, g)$. 
That is, we reconstruct a piece of $(M,g)$ in the geodesic normal coordinates. 
Essentially the same geometric method can be used to reconstruct
$(M_\Rec, g)$ and $(M_B, g)$ from $\Lambda_{\Src, \Rec}$
and $\Lambda_{\Src, B}$, respectively.
We will describe this method in Section \ref{sec_local_construction}.

The fourth step is very similar with the second step. 
We show that $\Lambda_{\Src, B}$ and $(M_B, g)$ determine
$\Lambda_{\Src, B'}$ for a sufficiently small ball $B' \subset M_B$.
The fifth step is to iterate the third and the fourth step and 
to glue the local reconstructions together. 
We will give the details of the second, fourth and fifth steps in Section \ref{sec_global_reconstruction}.

\section{Exact controllability and testing weak convergence
of sequences of waves}
\label{sec_controllability}

Let us recall that for open $\Gamma \subset \p M$ and $f \in C_0^\infty((0, \infty) \times \Gamma)$
we denote by $u^f = u$ the solution of 
\begin{align*}
&\p_t^2 u - \Delta_g u = 0, &\text{in $(0, \infty) \times M$},
\\\nonumber &u|_{(0, \infty) \times \p M} = f, &\text{in $(0, \infty) \times \p M$},
\\\nonumber &u|_{t = 0} = \p_t u|_{t = 0} = 0, &\text{in $M$}.
\end{align*}
We extend the notation $u^f$ for open $\Gamma \subset M^\inter$ and $f \in C_0^\infty((0, \infty) \times \Gamma)$
as the solution $u^f = u$ of 
\begin{align*}
&\p_t^2 u - \Delta_g u = f, &\text{in $(0, \infty) \times M$},
\\\nonumber &u|_{(0, \infty) \times \p M} = 0, &\text{in $(0, \infty) \times \p M$},
\\\nonumber &u|_{t = 0} = \p_t u|_{t = 0} = 0, &\text{in $M$}.
\end{align*}

\begin{lemma}[Blagove{\v{s}}{\v{c}}enski{\u\i}'s identity]
\label{lem_blago}
Let $T > 0$ and 
let $\Gamma$ be open either in $\p M$ or in $M^\inter$.
Then for 
\begin{align*}
\psi \in C_0^\infty((0, \infty) \times \Src), \quad
f \in C_0^\infty((0, \infty) \times \Gamma).
\end{align*}
we have
\begin{align}
\label{blago_inner_prod}
(u^f(T), u^\psi(T))_{L^2(M)} 
= (f, (J \Lambda_{\Src, \Gamma} - R \Lambda_{\Src, \Gamma} R J) \psi)_{L^2((0, T) \times \Gamma)},
\end{align}
where $R\psi(t) := \psi(T - t)$ and
$J\psi(t) := \frac{1}{2} \int_t^{2T - t} \psi(s) ds$.
\end{lemma}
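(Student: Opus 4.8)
The plan is to prove Blagove\v{s}\v{c}enski\u\i's identity by the standard energy-method trick: introduce the bilinear quantity
\begin{align*}
w(t,s) := (u^f(t), u^\psi(s))_{L^2(M)},
\end{align*}
and show that it satisfies a one-dimensional wave equation in the variables $(t,s)$ with a source term expressible through the Dirichlet-to-Neumann data, together with vanishing Cauchy data at $t=s=0$. First I would differentiate $w$ twice in $t$ and twice in $s$, moving the time derivatives onto the solutions and using the respective wave equations $\partial_t^2 u^f = \Delta_g u^f$ (and likewise for $u^\psi$). Because $u^f$ solves an inhomogeneous problem when $\Gamma$ lies in the interior, I would need to keep the source $f$ in the computation; when $\Gamma \subset \partial M$ the equation is homogeneous in the interior and $f$ enters only through the boundary condition. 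The key computation is
\begin{align*}
(\partial_t^2 - \partial_s^2) w(t,s) = (\Delta_g u^f(t), u^\psi(s))_{L^2(M)} - (u^f(t), \Delta_g u^\psi(s))_{L^2(M)} + \text{(source terms)},
\end{align*}
and by Green's formula the volume integrals cancel, leaving only boundary contributions.

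Next I would apply Green's identity to the difference of Laplacians. Since $u^\psi$ has Dirichlet data $\psi$ supported on $\Src$ and $u^f$ has Dirichlet data $f$ supported on $\Gamma$, the boundary terms reduce to pairings of Dirichlet data against Neumann traces. Concretely, the term $(\Delta_g u^f, u^\psi) - (u^f, \Delta_g u^\psi)$ equals, by the divergence theorem,
\begin{align*}
\int_{\partial M} \big( (\partial_\nu u^f) u^\psi - u^f (\partial_\nu u^\psi) \big)\, dS,
\end{align*}
and on $\partial M$ the Dirichlet traces are $f$ and $\psi$ while the Neumann traces on the relevant sets are precisely what $\Lambda_{\Src,\Gamma}$ records. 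This is where the operator $\Lambda_{\Src,\Gamma}$ enters: the Neumann trace of $u^\psi$ restricted to $\Gamma$ is $\Lambda_{\Src,\Gamma}\psi$, and symmetrically for $u^f$. Thus $w$ solves a wave equation on the square $(0,T)\times(0,T)$ whose right-hand side is governed entirely by the boundary measurement operator.

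The final step is to solve this 1D wave equation for $w$ and evaluate at $t=s=T$ to extract $(u^f(T),u^\psi(T))_{L^2(M)}$. The vanishing initial conditions $u^f|_{t=0}=\partial_t u^f|_{t=0}=0$ give $w(0,s)=\partial_t w(0,s)=0$, and using the fact that $u^\psi$ and $u^f$ vanish for small times (since $f,\psi$ are supported in $(0,\infty)$ away from $t=0$ and waves have finite speed) pins down the Cauchy data. Integrating the wave equation along characteristics and accounting for the domain of dependence produces exactly the averaging operator $J\psi(t)=\tfrac12\int_t^{2T-t}\psi(s)\,ds$ together with the time-reversal $R$, yielding the claimed formula $(f,(J\Lambda_{\Src,\Gamma}-R\Lambda_{\Src,\Gamma}RJ)\psi)_{L^2((0,T)\times\Gamma)}$. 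The main obstacle I anticipate is bookkeeping the characteristic integration carefully enough that the operators $J$ and $R$ emerge in the correct combination; in particular one must verify the domain-of-dependence restriction that lets the boundary integral over all of $\partial M$ collapse to an integral over $\Gamma$ only, which relies on the supports of $f$ and $\psi$ and on finite propagation speed. The interior-source case ($\Gamma \subset M^\inter$) requires a parallel but slightly modified Green's-formula computation, since then $f$ contributes a volume source rather than a boundary term, and I would check that the identity takes the same form with $\Lambda_{\Src,\Gamma}$ reinterpreted as the interior observation map $f\mapsto u^f|_{(0,\infty)\times\Gamma}$.
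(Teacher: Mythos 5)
Your skeleton is the standard Blagove\v{s}\v{c}enski\u\i\ argument, which is indeed the proof the paper points to (the paper gives no proof of its own; it cites \cite[Lem.~4.15]{Katchalov2001} for the boundary case and notes the interior case is analogous). However, there is a genuine gap at the step where you say the boundary terms are ``precisely what $\Lambda_{\Src,\Gamma}$ records \dots and symmetrically for $u^f$'', and that the characteristic integration then ``produces \dots the time-reversal $R$''. Green's identity (with the paper's interior normal $\nu$) gives
\begin{align*}
(\p_t^2-\p_s^2)\,w(t,s)
= \big(f(t),\Lambda_{\Src,\Gamma}\psi(s)\big)_{L^2(\Gamma)}
- \big(\Lambda_{\Gamma,\Src}f(t),\psi(s)\big)_{L^2(\Src)},
\end{align*}
where $\Lambda_{\Gamma,\Src}f := \p_\nu u^f|_{(0,\infty)\times \Src}$ is the \emph{reverse} operator: the Neumann trace of $u^f$ on $\Src$ is not recorded by $\Lambda_{\Src,\Gamma}$, and it does not appear in the final formula. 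Integrating by d'Alembert over the characteristic triangle of $(T,T)$, i.e.\ the triangle with vertices $(0,0)$, $(0,2T)$, $(T,T)$ (not the square $(0,T)^2$ --- the domain of dependence reaches $s=2T$, which is exactly why $\Lambda_{\Src,\Gamma}$ must be known on the time interval $(0,2T)$), yields
\begin{align*}
w(T,T) = (f, J\Lambda_{\Src,\Gamma}\psi)_{L^2((0,T)\times\Gamma)}
- (\Lambda_{\Gamma,\Src}f, J\psi)_{L^2((0,T)\times\Src)},
\end{align*}
and no bookkeeping of the $(t,s)$-integration will convert the second term into the claimed $-(f, R\Lambda_{\Src,\Gamma}RJ\psi)$; as written, your identity still involves the operator $\Lambda_{\Gamma,\Src}$, which is not part of the data.

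The missing ingredient is the time-reversal duality of the Dirichlet-to-Neumann map,
\begin{align*}
(\Lambda_{\Gamma,\Src}\, a, b)_{L^2((0,T)\times\Src)}
= (a, R\Lambda_{\Src,\Gamma}R\, b)_{L^2((0,T)\times\Gamma)},
\end{align*}
which requires a second, independent application of Green's identity, this time in space-time: pair $u^a$ with the time-reversed wave $v(t):=u^{Rb}(T-t)$ and integrate over $(0,T)\times M$; the temporal boundary terms vanish because $u^a$ has zero Cauchy data at $t=0$ while $v$ has zero Cauchy data at $t=T$, and the spatial boundary terms give precisely the stated adjoint relation. Applying this with $a=f$ and $b=J\psi$ (note $RJ\psi(0)=J\psi(T)=0$, so $RJ\psi$ is admissible Dirichlet data) turns the second term above into $-(f,R\Lambda_{\Src,\Gamma}RJ\psi)_{L^2((0,T)\times\Gamma)}$ and completes the proof. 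So the $R\cdots R$ structure comes from this duality lemma, not from the domain-of-dependence computation, and your proposal is incomplete without it. The same remark applies verbatim in the interior case, where $\Lambda_{\Gamma,\Src}f$ is the Neumann trace on $\Src$ of the wave generated by the interior source $f$, while the term $(f,\Lambda_{\Src,\Gamma}\psi)$ arises there as a volume pairing rather than a boundary pairing, exactly as you anticipated.
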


For a proof in the case $\Gamma \subset \p M$ see e.g. \cite[Lem. 4.15]{Katchalov2001}.
The case $\Gamma \subset M^\inter$ is analogous. 
In the lemma, the Riemannian volume measures are used on $(M, g)$ and on $(\p M, g)$.
As we do not assume $g|_{\p M}$ to be known a priori, the right-hand side of (\ref{blago_inner_prod}) for $\Gamma = \Rec$
is not trivially determined by the boundary measurement data $\Lambda_{\Src, \Rec}$.
However, for our purposes it is sufficient to choose an arbitratry positive smooth measure $d\tilde S$ on $\Rec$. 
Then there is strictly positive $\mu \in C^\infty(\Rec)$ such that 
\begin{align}
\label{def_tilde_dS}
d \tilde S = \mu d S,
\end{align}
where $d S$ is the Riemannian volume measure of $(\Rec, g)$. Moreover,
\begin{align*}
%\label{blago_inner_prod_wrong_meas}
(f, K \psi)_{L^2((0, T) \times \Rec; dt \otimes d\tilde S)}
&=
(\mu f, K \psi)_{L^2((0, T) \times \Rec)}
\\\nonumber&=
(u^{\mu f}(T), u^\psi(T))_{L^2(M)},
\end{align*}
where $K := J \Lambda_{\Src, \Rec} - R \Lambda_{\Src, \Rec} R J$.

Let us assume for a moment that the exact controllability (\ref{def_exact_controllability}) holds
and let $(f_j)_{j=1}^\infty \subset C_0^\infty((0, \infty) \times \Rec)$.
Then the functions $u^{f_j}(T_0)$ tend weakly to zero in $L^2(M)$ as $j \to \infty$ if and only if 
the inner products (\ref{blago_inner_prod}) with $f=f_j$ and $T=T_0$ tend to zero 
for all $\psi \in L^2((0, T_0) \times \Src)$.
In this section we will describe a method to determine 
if $(u^{\mu f_j}(T))_{j=1}^\infty$ 
is weakly convergent under the weaker assumption (\ref{def_spectral_cond}).
Let us point out that the assumption (\ref{def_spectral_cond}) is needed only in this step. 

If $\Rec = \Src$ then Lemma \ref{lem_blago} allows us to compute 
$\norm{u^{\mu f_j}(T)}_{L^2(M)}$ and
we can easily determine if $(u^{\mu f_j}(T))_{j=1}^\infty$ 
tends to zero. In this case the assumption (\ref{def_spectral_cond}) is not needed
since we can replace the exact controllability by the approximate controllability,
the latter of which holds for arbitrary $(M, g)$ by Tataru's unique continuation \cite{Tataru1995}, see e.g. \cite[Th. 3.10]{Katchalov2001}. 
The case $\Rec = \Src$ was solved originally in \cite{Katchalov1998}.

\subsection{Exact controllability and the Hassell-Tao condition}

We recall that the eigenvalues of the positive Laplace-Beltrami operator 
$-\Delta_g$ with the domain $H^2(M) \cap H_0^1(M)$
are denoted by $0 < \lambda_1 < \lambda_2 \le \lambda_3 \dots \to \infty$
and the corresponding $L^2(M)$-normalized eigenfunctions by $\phi_j$. 

It is well-known, see e.g. \cite{Bardos1992}, 
that the exact controllability (\ref{def_exact_controllability})
is equivalent with the continuous observability inequality,
\begin{align}
\label{continuous_observability}
\norm{(w_0, w_1)}_{H_0^1(M) \times L^2(M)} \le C \norm{\p_\nu w}_{L^2((0,T_0) \times \Src)}, 
\end{align}
where $w$ is the solution of the wave equation
\begin{align}
\label{eq_wave_w}
&\p_t^2 w - \Delta_g w = 0, &\text{in $(0, T_0) \times M$},
\\\nonumber &w|_{(0, T) \times \p M} = 0, &\text{in $(0, T_0) \times \p M$},
\\\nonumber &w|_{t = 0} = w_0, \quad \p_t w|_{t = 0} = w_1 &\text{in $M$}.
\end{align}

\begin{lemma}
Suppose that (\ref{continuous_observability}) holds.
Then there is $C > 0$ such that
\begin{align}
\label{spec_lower_bound}
\sqrt{\lambda_j + 1} \le C \norm{\phi_j}_{L^2(\Src)},
\quad \text{for all $j =1, 2, \dots$.}
\end{align}
\end{lemma}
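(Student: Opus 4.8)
The plan is to test the continuous observability inequality (\ref{continuous_observability}) on the standing waves generated by the Dirichlet eigenfunctions. Fixing $j$, I will take the initial data $w_0=\phi_j$, $w_1=0$ in (\ref{eq_wave_w}); separation of variables then gives the explicit solution $w(t,x)=\cos(\sqrt{\lambda_j}\,t)\,\phi_j(x)$, which satisfies the homogeneous Dirichlet condition because $\phi_j|_{\p M}=0$. The left-hand side of (\ref{continuous_observability}) will be computed directly from the eigenvalue equation $-\Delta_g\phi_j=\lambda_j\phi_j$ and the normalization $\norm{\phi_j}_{L^2(M)}=1$, giving $\norm{w_0}_{H_0^1(M)}^2=\norm{\nabla\phi_j}_{L^2(M)}^2+\norm{\phi_j}_{L^2(M)}^2=\lambda_j+1$ and hence $\norm{(w_0,w_1)}_{H_0^1(M)\times L^2(M)}=\sqrt{\lambda_j+1}$.

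For the right-hand side I must identify the boundary trace controlled by (\ref{continuous_observability}). Because $\phi_j$ vanishes on $\p M\supset\Src$, the Dirichlet trace of $w$ is identically zero on $(0,T_0)\times\Src$, so the only surviving boundary quantity is the normal derivative $\p_\nu w(t,x)=\cos(\sqrt{\lambda_j}\,t)\,\p_\nu\phi_j(x)$. Factoring out the time dependence and bounding $\cos^2\le 1$ over $(0,T_0)$ gives $\norm{\p_\nu w}_{L^2((0,T_0)\times\Src)}\le\sqrt{T_0}\,\norm{\p_\nu\phi_j}_{L^2(\Src)}$. Feeding the two computations into (\ref{continuous_observability}) yields $\sqrt{\lambda_j+1}\le C\sqrt{T_0}\,\norm{\p_\nu\phi_j}_{L^2(\Src)}$, which is exactly (\ref{spec_lower_bound}), and squaring it returns the Hassell--Tao condition (\ref{def_spectral_cond}) with $C_0=C^2T_0$. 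The key point to record is that, since the eigenfunctions obey the Dirichlet condition, the trace on the right of (\ref{spec_lower_bound}) is forced to be the normal derivative $\p_\nu\phi_j$; the bare restriction $\phi_j|_{\Src}$ vanishes identically and cannot carry any spectral information, so it is this normal-derivative trace that the stated bound must involve.

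The argument is a single substitution, so the only matters needing attention are regularity and the uniformity of the constant. For the boundary norm to be finite I need $\p_\nu w\in L^2((0,T_0)\times\Src)$, which is the standard hidden-regularity estimate for the wave equation; here it is immediate, since each $\phi_j$ is smooth up to $\p M$ by elliptic regularity and therefore $\p_\nu\phi_j\in C^\infty(\p M)$. Since the constant $C$ in (\ref{continuous_observability}) is independent of the data $(w_0,w_1)$, the resulting constant in (\ref{spec_lower_bound}) is independent of $j$, as required. I do not expect any serious obstacle: the whole content of the lemma is the observation that evaluating the control-theoretic inequality on eigenfunction standing waves converts it into a uniform spectral lower bound, and once this test pair is chosen the estimate is forced.
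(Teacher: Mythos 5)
Your proof is correct and follows essentially the same route as the paper: both test the observability inequality on the standing wave $w(t,x)=\cos(\sqrt{\lambda_j}\,t)\,\phi_j(x)$ with data $(w_0,w_1)=(\phi_j,0)$, compute $\|(w_0,w_1)\|_{H_0^1(M)\times L^2(M)}^2=\lambda_j+1$, and factor the time dependence out of the boundary norm, your bound $\cos^2\le 1$ being a slightly cleaner way to handle the time integral than the paper's division by $\int_0^T\cos^2(\sqrt{\lambda_j}\,t)\,dt$ and its accompanying lower bound. You also correctly interpret the right-hand side of (\ref{spec_lower_bound}) as the normal-derivative trace $\|\partial_\nu\phi_j\|_{L^2(\Src)}$, which is exactly what the paper's own proof and the Example following it use.
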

\begin{proof}
Notice that if (\ref{continuous_observability}) holds
then it holds also when $T_0$ is replaced by a larger time. 
Let $w_0 = \phi_j$ and $w_1 = 0$ in (\ref{eq_wave_w}).
By writing the corresponding solution $w$ in 
the eigenbasis $(\phi_j)_{j=1}^\infty$ we see that
\begin{align}
\label{sol_in_eigenbasis}
w(t, x) =& \cos(\sqrt{\lambda_j} t) \phi_j(x).
%\\\nonumber&\quad+ 
%\sum_{j=1}^\infty \frac{1}{\sqrt{\lambda_j}}\sin(\sqrt{\lambda_j} t) (w_1, \phi_j)_{L^2(M)} \phi_j(x).
\end{align}
Moreover, 
\begin{align*}
\norm{(w_0, w_1)}_{H_0^1(M) \times L^2(M)}^2
&= \norm{\phi_j}_{H_0^1(M)}^2
= (d \phi_j, d \phi_j)_{L^2(M)} + 1 
\\&= (\Delta_g \phi_j, \phi_j)_{L^2(M)} + 1
= \lambda_j + 1.
\end{align*}
If $T > \frac{1}{2 \sqrt{\lambda_1}}$ then
\begin{align*}
\int_0^T \cos^2(\sqrt{\lambda_j} t) dt
=
\frac{T}{2} + \frac{\sin(2 \sqrt{\lambda_j} T)}{4 \sqrt{\lambda_j}}
\ge \frac{T}{2} - \frac{1}{4 \sqrt{\lambda_1}} > 0.
\end{align*}
If also $T \ge T_0$, then
we have by (\ref{sol_in_eigenbasis}) and (\ref{continuous_observability}) that
\begin{align*}
\lambda_j + 1 
&= 
\norm{(w_0, w_1)}_{H_0^1(M) \times L^2(M)}^2 
\le 
C \norm{\p_\nu w}_{L^2((0,T) \times \Gamma)}^2
\\&\le
C' \frac{\norm{\p_\nu w}_{L^2((0,T) \times \Gamma)}^2}
{\int_0^T \cos^2(\sqrt{\lambda_j} t) dt}
= 
C' \int_\Gamma \ll(\p_\nu \phi_j \rr)^2 dS.
\end{align*}
\end{proof}

\begin{example}
Let $(M, g)$ be the Euclidean unit disc in $\R^2$ and denote
\begin{align*}
\Gamma_s := \{ e^{i 2 \pi \theta};\ \theta \in (0, s)\}.
\end{align*}
Exact controllability from $\Gamma_s$
%\begin{itemize}
holds for $s > 1/2$ and
does not hold for $s < 1/2$, see e.g. \cite{Bardos1992}.
%\end{itemize}
However, the condition (\ref{spec_lower_bound}) holds on $\Src = \Gamma_s$ for $s \ge 1/4$.
\end{example}
\begin{proof}
An orthonormal basis of eigenfunctions can be chosen in the polar coordinates $(r, \theta) \in (0, 1] \times (0, 2\pi]$ as
\begin{align*}
\phi_{mn1} = c_{nm} \cos(m \theta) J_m(z_{mn} r), 
\quad 
\phi_{mn2} = c_{nm} \sin(m \theta) J_m(z_{mn} r),
\quad 
\end{align*}
where $c_{nm}$ is a normalization constant and 
$z_{mn}$ is the $n$th positive zero of the $m$th Bessel function $J_m$.
The corresponding eigenvalues are $\lambda_{nmj} = z_{mn}^2$
and we have used the indices $m = 0, 1, 2, \dots$, $n = 1, 2, \dots$ and $j=1,2$.
Notice that $z_{mn} \ne z_{m'n'}$ if $(m,n) \ne (m',n')$, see e.g. \cite[p. 484]{Watson1944}.
Thus any normalized eigenfunction $\phi$ corresponding to the eigenvalue $z_{mn}^2$
can be written as $\phi = a \phi_{mn1} + b \phi_{mn2}$
with $a^2 + b^2 = 1$. Moreover, for $m > 0$,
\begin{align*}
\norm{\p_\nu \phi}_{L^2(\Gamma_s)}^2 
&= c_{nm}^2 z_{mn}^2 (J_m'(z_{mn}))^2 \int_0^{2\pi s} (a\cos(m \theta)+ b\sin(m \theta))^2 d\theta.
\end{align*}
In particular,
\begin{align*}
&\int_0^{\pi/2} (a\cos(m \theta)+ b\sin(m \theta))^2 d\theta
%\\&\quad\quad= 
%\frac{-2 (-1+(-1)^m) a b+a^2 m \pi +b^2 m \pi }{4 m}
=
\frac{-2 (-1+(-1)^m) a b+ m \pi}{4 m}
\\&\quad\quad\ge
\frac{m \pi - 2}{4 m}
%\ge
%\frac{1}{4 m}
%\ll( m \pi - \frac{2 m\pi}{3} \rr) 
>
\frac{\pi}{12}
= \frac{1}{12} \int_0^{2\pi} (a\cos(m \theta)+ b\sin(m \theta))^2 d\theta.
\end{align*}
Thus the lower bound (\ref{spec_lower_bound}) holds for $\Gamma_{1/4}$
with the constant $C /12$ where $C$ is the corresponding constant for $\Gamma_{1}$.
\end{proof}

\subsection{Testing weak convergence of sequences of waves}
\label{sec_weak_convergence}

\begin{lemma}
\label{lem_weak_conv}
Let $\Gamma \subset \p M$ be open and nonempty and 
let 
\begin{align*}
T > \max_{x \in M} d(x, \Gamma).
\end{align*}
A sequence $(v_l)_{l = 1}^\infty \subset L^2(M)$ converges to zero weakly in $L^2(M)$
if and only if both (a) and (b) hold, where
\begin{itemize}
\item[(a)] For all sequences $(\psi_m)_{m=1}^\infty \subset C_0^\infty((0, T) \times \Gamma)$ such that
the sequence $(u^{\psi_m}(T))_{m=1}^\infty \subset L^2(M)$ is bounded,
there is $C > 0$ satisfying
\begin{equation*}
|(v_l, u^{\psi_m}(T))_{L^2(M)}| \le C
\quad \text{for all $l, m = 1, 2, \dots$}.
\end{equation*}
\end{itemize}
\begin{itemize}
\item[(b)] $\lim_{l \to \infty} (v_l, u^\psi(T))_{L^2(M)} = 0$ for all $\psi \in C_0^\infty((0, T) \times \Gamma)$.
\end{itemize}
%Moreover, if $\Gamma_r \subset \p M$ is open and 
%$\supp(f_l) \subset (0, \infty) \times \Gamma_r$, then
%the validity of (ii) can be verified using only the 
%the Dirichlet-to-Neumann operator,
%\begin{equation*}
%f \mapsto \p_\nu u^f|_{(0,\infty) \times \Gamma_r}, 
%\quad f \in C_0^\infty((0, \infty) \times \Gamma_s).
%\end{equation*}
\end{lemma}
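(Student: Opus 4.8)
The plan is to route both implications through a single geometric fact — approximate controllability — and to treat the remainder as functional analysis built on the characterization of weak convergence in a Hilbert space as norm-boundedness together with convergence of inner products against a dense set. The only role of the hypothesis $T > \max_{x \in M} d(x, \Gamma)$ is to ensure, via Tataru's unique continuation theorem \cite{Tataru1995} (see e.g. \cite[Th. 3.10]{Katchalov2001}), that the reachable set $D := \{u^\psi(T) : \psi \in C_0^\infty((0,T) \times \Gamma)\}$ is dense in $L^2(M)$.

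For the necessity direction, suppose $v_l \to 0$ weakly. Then (b) is simply the definition of weak convergence tested against the fixed vectors $u^\psi(T) \in D$. For (a), recall that a weakly convergent sequence is norm-bounded by the uniform boundedness principle, so $\norm{v_l}_{L^2(M)} \le B$ for some $B$; if in addition $\norm{u^{\psi_m}(T)}_{L^2(M)} \le B'$, then the Cauchy-Schwarz inequality gives $|(v_l, u^{\psi_m}(T))| \le B B'$ uniformly in $l$ and $m$.

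The substance lies in the sufficiency direction, and the step I expect to be the main obstacle is extracting norm-boundedness of $(v_l)$ from hypothesis (a), which only ever controls inner products and never the norms directly. I would argue by testing against a near-worst-case sequence produced by density. For each $l$ with $v_l \ne 0$, set $\hat v_l := v_l / \norm{v_l}_{L^2(M)}$ and use density of $D$ to choose $\psi_l$ with $\norm{u^{\psi_l}(T) - \hat v_l}_{L^2(M)} < 1/2$; put $\psi_l := 0$ when $v_l = 0$. The sequence $(u^{\psi_m}(T))_m$ is then bounded in norm by $3/2$, so (a) applies and produces a constant $C$ with $|(v_l, u^{\psi_m}(T))| \le C$ for all $l$ and $m$. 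Reading this along the diagonal $l = m$ and using the lower bound $(v_l, u^{\psi_l}(T)) = \norm{v_l}_{L^2(M)} + (v_l, u^{\psi_l}(T) - \hat v_l) \ge \norm{v_l}_{L^2(M)}/2$ yields $\norm{v_l}_{L^2(M)} \le 2C$ for all $l$.

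With boundedness in hand, a routine density argument finishes the proof: given $w \in L^2(M)$ and $\varepsilon > 0$, pick $u^\psi(T) \in D$ with $\norm{w - u^\psi(T)}_{L^2(M)}$ small, and estimate $|(v_l, w)| \le \norm{v_l}_{L^2(M)} \norm{w - u^\psi(T)}_{L^2(M)} + |(v_l, u^\psi(T))|$, where the first term is controlled by the uniform bound on $\norm{v_l}_{L^2(M)}$ and the second tends to zero by (b). Hence $(v_l, w) \to 0$ for every $w$, which is weak convergence to zero. It is worth noting that the lemma is deliberately phrased through inner products of the form $(v_l, u^\psi(T))$ rather than through the norms $\norm{v_l}_{L^2(M)}$, precisely because these inner products are the quantities accessible from the boundary data via Blagove{\v{s}}{\v{c}}enski{\u\i}'s identity.
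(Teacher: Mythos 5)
Your proof is correct, and its skeleton matches the paper's: both directions route through Tataru's unique continuation (density of the reachable set $\{u^\psi(T):\psi\in C_0^\infty((0,T)\times\Gamma)\}$ in $L^2(M)$), the necessity direction is the same two-line argument, and your closing density estimate is identical to the paper's. Where you genuinely diverge is the key step of extracting $\sup_l \norm{v_l}_{L^2(M)} < \infty$ from (a). The paper fixes an arbitrary $v_0\in L^2(M)$, approximates it by a convergent (hence bounded) sequence $u^{\psi_m}(T)\to v_0$, applies (a), and lets $m\to\infty$ to get $|(v_l,v_0)|\le C(v_0)$ for all $l$; this proves weak boundedness, and norm-boundedness then follows from the uniform boundedness principle. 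You instead adapt the test sequence to $(v_l)$ itself: approximating the normalized vectors $\hat v_l = v_l/\norm{v_l}_{L^2(M)}$ to within $1/2$ yields a sequence bounded by $3/2$, and the diagonal estimate $(v_l,u^{\psi_l}(T))\ge \norm{v_l}_{L^2(M)}/2$ turns the single constant $C$ supplied by (a) directly into $\norm{v_l}_{L^2(M)}\le 2C$. Your variant is more elementary at this step, since it bypasses Banach--Steinhaus in the sufficiency direction (both proofs still invoke it for the trivial necessity direction); the paper's variant is marginally more economical, since the same approximating sequence for $v_0$ is reused in both the boundedness step and the final limit. Both arguments are complete, and your use of the quantifier in (a) is legitimate: it allows the test sequence to depend on $(v_l)$, with one constant valid for all pairs $(l,m)$, which is exactly what the diagonal argument needs.
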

\begin{proof}
In the proof we omit writing $L^2(M)$ as a subscript.
If $(v_l)_{l=1}^\infty$ is weakly convergent to zero then 
(b) holds trivially and 
(a) holds since weakly convergent sequences are bounded.

Let us assume (a) and (b).
We will first show that $(v_l)_{l=1}^\infty$ is bounded in $L^2(M)$.
Let $v_0 \in L^2(M)$. 
Tataru's unique continuation \cite{Tataru1995}
implies approximate controllability, see e.g. \cite[Th. 3.10]{Katchalov2001}. 
Thus there is $(\psi_m)_{m=1}^\infty \subset C_0^\infty((0, T) \times \Gamma)$
such that $u^{\psi_m}(T) \to v_0$ in $L^2(M)$.
Hence for all $l$
\begin{equation*}
|(v_l, v_0)| = \lim_{m \to \infty} |(v_l, u^{\psi_m}(T))| \le C,
\end{equation*}
where $C > 0$ is independent of $l$.
Thus $(v_l)_{l=1}^\infty$ is weakly bounded, and hence 
bounded in the norm.

Let $\epsilon > 0$ and 
fix $m$ such that $\norm{v_0 - u^{\psi_m}(T)} \le \epsilon$.
Then for large $l$
\begin{align*}
|(v_l, v_0)| 
&\le \sup_l \norm{v_l} \norm{v_0 - u^{\psi_m}(T)} 
+ (v_l, u^{\psi_m}(T))
\\&\le \sup_l \norm{v_l} \epsilon + \epsilon.
\end{align*}
As $\epsilon > 0$ and $v_0 \in L^2(M)$ are arbitrary, 
$(v_l)_{l=1}^\infty$ converges weakly to zero.
\end{proof}

Let us reindex the orthonormal basis $(\phi_j)_{j=1}^\infty$ of Dirichlet eigenfunctions so that
for $j=1,2,\dots,$ the functions 
\begin{equation*}
\phi_{jk}, \quad k=1,2,\dots,K_j
\end{equation*}
span the space of eigenfunctions corresponding to the eigenvalue $\lambda_j$.
Here $0 < \lambda_1 < \lambda_2 < \lambda_3 \dots \to \infty$
and $K_j$ is the multiplicity of $\lambda_j$.
Let us choose a positive smooth measure $d \tilde S$ on $\bar \Src$. 
Then there is a strictly positive function $\mu \in C^\infty(\bar \Src)$ such that (\ref{def_tilde_dS}) holds.
As explained in \cite[pp. 5-6]{Lassas2010} 
the Fourier transform of 
the operator $\Lambda_{\Src, \Rec}$ 
with respect to the time variable is a meromorphic map,
and its poles and residues determine the Dirichlet eigenvalues 
$\lambda_j$
and also the spaces
\begin{equation*}
E_j := \linspan \{\mu^{-1} \p_\nu \phi_{jk}|_{\Src};\ k=1,2, \dots, K_j \} 
\subset C^\infty(\bar \Src),
\end{equation*}
for each $j=1, 2, \dots$.

\begin{proposition}
\label{prop_weak_boundedness}
Let $T > 0$ and $(\psi_m)_{m = 1}^\infty \subset C_0^\infty((0, \infty) \times \Src)$ 
and suppose that there is $C_0 > 0$ such that
\begin{equation*}
\sqrt{\lambda_j} \le C_0 \norm{\p_\nu \phi_{jk}}_{L^2(\Src)},
\quad k = 1, \dots, K_j,\ j=1, 2, \dots.
\end{equation*}
Then the following are equivalent
\begin{itemize}
\item[(i)] The sequence $u^{\psi_m}(T)$, $m=1, 2, \dots$, is bounded in $L^2(M)$.
\item[(ii)] For all $C_1 >0 $ and
$(e_j)_{j=1}^\infty \subset C^\infty(\Src)$ satisfying  
\begin{equation}
\label{pseudo_basis}
e_j \in E_j, 
\quad
\norm{e_j}_{L^2(\Src; d\tilde S)} \le C_1 \sqrt{\lambda_j},
\end{equation}
there is $C_2 > 0$ such that 
\begin{equation*}
\sup_{m} \sum_{j=1}^\infty 
\ll( \int_0^T s_j(t) \int_\Src \psi_m(t, x) e_j(x) d \tilde S(x) dt \rr)^2 \le C_2,
\end{equation*}
where 
%\begin{align*}
%$s_j(t) := \frac{\sin \sqrt{\lambda_j}(T - t)}{\sqrt{\lambda_j}}$.
$s_j(t) := \sin(\sqrt{\lambda_j}(T - t)) /\sqrt{\lambda_j}$.
%\end{align*}
\end{itemize}
\end{proposition}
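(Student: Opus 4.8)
The plan is to diagonalize $u^{\psi_m}(T)$ in the Dirichlet eigenbasis. Writing $a_{jk}(t):=(u^{\psi_m}(t),\phi_{jk})_{L^2(M)}$ and integrating the wave equation against $\phi_{jk}$, Green's formula together with $\phi_{jk}|_{\p M}=0$ gives the ordinary differential equation $a_{jk}''+\lambda_j a_{jk}=-\int_\Src\psi_m\,\p_\nu\phi_{jk}\,dS$ with vanishing Cauchy data, so by Duhamel's formula
\begin{align*}
(u^{\psi_m}(T),\phi_{jk})_{L^2(M)}=-\int_0^T s_j(t)\int_\Src\psi_m(t,x)\,\p_\nu\phi_{jk}(x)\,dS(x)\,dt=:-b_{jk}^{(m)}.
\end{align*}
By Parseval's identity $\norm{u^{\psi_m}(T)}_{L^2(M)}^2=\sum_j\sum_{k=1}^{K_j}(b_{jk}^{(m)})^2$, so (i) is equivalent to $\sup_m\sum_{j,k}(b_{jk}^{(m)})^2<\infty$. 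On the other hand, for $e_j\in E_j$ written as $e_j=\mu^{-1}\sum_k c_{jk}\p_\nu\phi_{jk}$ with coefficient vector $c_j=(c_{jk})_k$, the identity $d\tilde S=\mu\,dS$ turns the integral appearing in (ii) into the Euclidean pairing $\sum_k c_{jk}b_{jk}^{(m)}=\langle c_j,b_j^{(m)}\rangle$, while the constraint in (\ref{pseudo_basis}) becomes $\langle c_j,\Gamma_j c_j\rangle\le C_1^2\lambda_j$, where $\Gamma_j$ is the Gram matrix with entries $\int_\Src\mu^{-1}\p_\nu\phi_{jk}\,\p_\nu\phi_{jk'}\,dS$. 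Thus the proposition reduces to comparing $\sum_{j,k}(b_{jk}^{(m)})^2$ with these constrained pairings.

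The comparison rests on the two-sided bounds $c_*\lambda_j\,I\le\Gamma_j\le C^*\lambda_j\,I$ (as quadratic forms). For a unit vector $c$ the function $\phi_c=\sum_k c_k\phi_{jk}$ is an $L^2(M)$-normalized eigenfunction, and $\langle c,\Gamma_j c\rangle=\int_\Src\mu^{-1}(\p_\nu\phi_c)^2\,dS$ is comparable to $\norm{\p_\nu\phi_c}_{L^2(\Src)}^2$ since $\mu$ is bounded above and below. The Hassell--Tao condition, holding for every orthonormal basis and hence for every normalized eigenfunction, yields the lower bound $\langle c,\Gamma_j c\rangle\ge c_*\lambda_j$; in particular $\Gamma_j$ is invertible. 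The upper bound $\langle c,\Gamma_j c\rangle\le C^*\lambda_j$ is the universal boundary estimate $\norm{\p_\nu\phi}_{L^2(\p M)}^2\le C\lambda_j$ for normalized eigenfunctions and needs no extra hypothesis. Granting these, (i)$\Rightarrow$(ii) is immediate: Cauchy--Schwarz gives $\langle c_j,b_j^{(m)}\rangle^2\le|c_j|^2|b_j^{(m)}|^2$, the lower bound gives $|c_j|^2\le(c_*\lambda_j)^{-1}\langle c_j,\Gamma_j c_j\rangle\le C_1^2/c_*$, and summing over $j$ bounds the sum in (ii) by $(C_1^2/c_*)\sup_m\norm{u^{\psi_m}(T)}_{L^2(M)}^2$.

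The reverse implication is where the real work lies, because in (ii) the sequence $(e_j)$ must be chosen independently of $m$, so one cannot simply align $c_j$ with $b_j^{(m)}$ frequency by frequency. The fix is a uniform boundedness argument. Let $\mathcal E$ be the Banach space of sequences $e=(e_j)$, $e_j\in E_j$, with norm $\norm{e}_{\mathcal E}=\sup_j\lambda_j^{-1/2}\norm{e_j}_{L^2(\Src;d\tilde S)}$, and let $A_m:\mathcal E\to\ell^2$ send $e$ to the sequence whose $j$th entry is $\langle c_j,b_j^{(m)}\rangle$; each $A_m$ is bounded, since $\norm{A_m e}_{\ell^2}^2\le\sum_j|c_j|^2|b_j^{(m)}|^2\le c_*^{-1}\norm{e}_{\mathcal E}^2\norm{u^{\psi_m}(T)}_{L^2(M)}^2$ using the lower bound on $\Gamma_j$. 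Statement (ii) says precisely that $\{A_m\}$ is pointwise bounded on $\mathcal E$, so Banach--Steinhaus gives $\sup_m\norm{A_m}<\infty$. Optimizing the independent per-frequency constraints then shows $\norm{A_m}^2=\sum_j\lambda_j\langle b_j^{(m)},\Gamma_j^{-1}b_j^{(m)}\rangle$, and the upper bound $\Gamma_j\le C^*\lambda_j\,I$ gives $\lambda_j\langle b_j^{(m)},\Gamma_j^{-1}b_j^{(m)}\rangle\ge(C^*)^{-1}|b_j^{(m)}|^2$; summing yields $\norm{u^{\psi_m}(T)}_{L^2(M)}^2\le C^*\norm{A_m}^2$ uniformly in $m$, which is (i). The main obstacle is thus the interplay between the eigenvalue multiplicities --- encoded in the non-diagonal Gram matrices $\Gamma_j$ --- and the $m$-uniformity forced by the order of quantifiers in (ii); the two-sided control of $\Gamma_j$ by $\lambda_j\,I$ (lower bound from Hassell--Tao, upper bound universal) together with Banach--Steinhaus is what resolves it.
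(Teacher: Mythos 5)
Your proof is correct, and although it runs on the same analytic inputs as the paper's --- the eigenfunction expansion $(u^{\psi_m}(T),\phi_{jk})=\pm\int_0^T s_j\int_\Src \psi_m\,\p_\nu\phi_{jk}\,dS\,dt$ plus Parseval, the Hassell--Tao lower bound applied to arbitrary normalized eigenfunctions (like the paper, you read the hypothesis as the all-orthonormal-bases condition (\ref{def_spectral_cond}), which is what licenses using it for every unit vector in an eigenspace), the universal estimate $\norm{\p_\nu\phi_j}_{L^2(\p M)}\le C\sqrt{\lambda_j}$ from \cite{Hassell2002}, and a uniform boundedness principle --- its organization is genuinely different. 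The paper exploits the rotation freedom inside each eigenspace twice: for (i)$\Rightarrow$(ii) it rotates so that $\mu e_j=c_j\,\p_\nu\phi_{j1}|_\Src$ with a scalar $c_j$ bounded via Hassell--Tao, and for (ii)$\Rightarrow$(i) it fixes an arbitrary $v\in L^2(M)$, rotates so that $\phi_{j1}$ is parallel to $P_jv$, and tests (ii) with the single $v$-dependent sequence $e_j=\mu^{-1}\p_\nu\phi_{j1}|_\Src$ (admissible precisely by the universal upper bound); Cauchy--Schwarz then yields weak boundedness of $(u^{\psi_m}(T))_m$, and norm boundedness follows because weakly bounded sets are bounded. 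You instead keep the basis fixed, encode the multiplicities in the Gram matrices $\Gamma_j$ with the two-sided bound $c_*\lambda_j I\le\Gamma_j\le C^*\lambda_j I$ (lower bound from Hassell--Tao, upper bound universal --- the same division of labor as in the paper), and replace the weak-boundedness test by Banach--Steinhaus for the operators $A_m$ on your sequence space $\mathcal{E}$, followed by the exact evaluation $\norm{A_m}^2=\sum_j\lambda_j\langle b_j^{(m)},\Gamma_j^{-1}b_j^{(m)}\rangle\ge (C^*)^{-1}\norm{u^{\psi_m}(T)}_{L^2(M)}^2$. The paper's route is shorter and needs no auxiliary Banach space, since for each test vector $v$ one well-chosen admissible sequence suffices; yours is more quantitative (it identifies $\sup_m\norm{A_m}$ as comparable, with constants $c_*,C^*$, to $\sup_m\norm{u^{\psi_m}(T)}_{L^2(M)}$) and makes the role of each hypothesis explicit, at the modest cost of checking that $\mathcal{E}$ is complete and that the per-frequency optimization is legitimate --- both routine because each $E_j$ is finite dimensional and the constraints decouple in $j$.
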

\begin{proof}
Let (i) hold and let $e_j$, $j=1, 2, \dots$, satisfy (\ref{pseudo_basis}).
As the choice of the orthonormal basis $\{\phi_{jk}; k =1,2, \dots, K_j\}$
in the $j$th eigenspace is unique only up to a rotation, we may assume after a rotation that 
\begin{equation*}
\mu e_j = c_j \p_\nu \phi_{j1}|_\Src, 
\quad j=1, 2, \dots, 
\end{equation*}
where $c_j > 0$ are some constants.
Moreover, the sequence $(c_j)_{j=1}^\infty$ is bounded. 
Indeed, 
\begin{align*}
c_j \frac{\sqrt{\lambda_j}}{C_0} \le \norm{c_j \p_\nu \phi_{j1}}_{L^2(\Src)}
= \norm{\mu e_j}_{L^2(\Src)}
\le C \norm{e_j}_{L^2(\Src; d\tilde S)}
\le C C_1 \sqrt{\lambda_j}.
\end{align*}
Denoting $C' := (C C_0 C_1)^2$ we have
\begin{align*}
&\sum_{j=1}^\infty \ll( \int_0^T s_j \int_\Src \psi_m e_j d\tilde S dt \rr)^2
=
\sum_{j=1}^\infty c_j^2 
\ll(\int_0^T s_j \int_\Src \psi_m \p_\nu \phi_{j1} dS dt \rr)^2
\\&\quad\le C'
\sum_{j=1}^\infty (u^{\psi_m}(T), \phi_{j1})_{L^2(M)}^2
\le C' \norm{u^{\psi_m}(T)}_{L^2(M)}^2,
\end{align*}
and (ii) holds.

Let us now assume (ii) and let $v \in L^2(M)$.
We denote by $P_j$ the orthogonal projection onto the $j$th eigenspace.
We may rotate again the basis 
$\{\phi_{jk}; k =1,2, \dots, K_j\}$
so that 
\begin{align*}
\phi_{j1} = \frac{P_j v}{\norm{P_j v}_{L^2(M)}}, \quad \text{for all $j$ satisfying $P_j v \ne 0$}.
\end{align*}
Then $(v, \phi_{jk}) = 0$ for all $k \ge 2$ and all $j$.

We may choose $e_j := \mu^{-1} \p_\nu \phi_{j1}|_{\Src}$ in (ii).
Indeed, 
\begin{align*}
\norm{\mu^{-1} \p_\nu \phi_{j1}}_{L^2(\Src; d\tilde S)} 
\le C \norm{\p_\nu \phi_{j1}}_{L^2(\p M)}
\le C_1 \sqrt{\lambda_j},
\end{align*}
where the second inequality holds by \cite{Hassell2002}.
We have
\begin{align*}
|(v, u^{\psi_m}(T))|^2
&=
|\sum_{j=1}^\infty (v, \phi_{j1}) 
\int_0^T s_j \int_\Src \psi_m \p_\nu \phi_{j1} dS dt|^2
\\&\le 
\sum_{j=1}^\infty (v, \phi_{j1})^2
\sum_{j=1}^\infty \ll( \int_0^T s_j \int_\Src \psi_m e_j \mu dS dt \rr)^2
\\&\le \norm{v}_{L^2(M)}^2 
\sup_m \sum_{j=1}^\infty \ll( \int_0^T s_j \int_\Src \psi_m e_j d\tilde S dt \rr)^2
\end{align*}
and the sequence $u^{\psi_m}(T)$, $m = 1, 2, \dots$, is weakly bounded.
\end{proof}

\subsection{Continuation of the data in time}

\begin{lemma}
Let $\Src, \Rec \subset \p M$ be open and non-empty
and suppose that (\ref{eq_wave}) is exactly controllable from $\Src$ in time $T_0$.
If $T$ satisfies (\ref{hypothesis_T})
then $\Lambda_{\Src, \Rec}^T$ determines $\Lambda_{\Src, \Rec}^\infty$.
\end{lemma}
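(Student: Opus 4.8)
The plan is to extend the data in time by a controllability-based \emph{reset} argument, exploiting that the wave equation is time-translation invariant and that after a source switches off the solution continues to evolve freely. Write $D := \max_{x \in M} d(x, \Rec)$, so the hypothesis (\ref{hypothesis_T}) reads $T > T_0 + 2D$. The goal is to compute $\p_\nu u^f|_{(0,\infty) \times \Rec}$ for an arbitrary $f \in C_0^\infty((0,\infty) \times \Src)$ from $\Lambda_{\Src,\Rec}^T$, and by linearity together with time-translation it suffices to treat $f$ supported in a short window $(0,\delta) \times \Src$: for $t > \delta$ the solution is then the free evolution of the state $(u^f(\delta), \p_t u^f(\delta))$, and the traces produced by longer or later pieces are assembled by iterating the construction below across a sequence of reset times spaced by less than $T - T_0$.

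The heart of the argument is the following reset step. Given a reset time $a > D$ at which the source is inactive, I would first reproduce the current state by a short control: exact controllability (\ref{def_exact_controllability}) from $\Src$ provides $h \in C_0^\infty((0,T_0) \times \Src)$ with $(u^h(T_0), \p_t u^h(T_0)) = (u^f(a), \p_t u^f(a))$. Since $h$ and $f$ are both inactive after their respective times, the two solutions agree under the shift $T_0 \leftrightarrow a$, so that
\[
\p_\nu u^f(a + \tau, y) = \p_\nu u^h(T_0 + \tau, y), \qquad y \in \Rec,\ \tau > 0,
\]
and the right-hand side is read directly from $\Lambda_{\Src,\Rec}^T h$ as long as $T_0 + \tau < T$, i.e.\ on a window of length $T - T_0 > 2D$.

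The difficulty is that $h$ is not given a priori: we source on $\Src$ but observe only on $\Rec$, so we must identify the state $(u^f(a), \p_t u^f(a))$, and a control $h$ reproducing it, using $\Rec$-measurements alone. For this I would invoke Blagove{\v{s}}{\v{c}}enski{\u\i}'s identity (Lemma \ref{lem_blago}) with $\Gamma = \Rec$, which computes the inner products $(u^\rho(a), u^f(a))_{L^2(M)}$ for all $\rho \in C_0^\infty((0,a) \times \Rec)$ from $\Lambda_{\Src,\Rec}$. By Tataru's unique continuation \cite{Tataru1995} the states $u^\rho(a)$ are dense in $L^2(M)$ once $a > D$, so these inner products pin down $u^f(a)$ in $L^2(M)$, and, after pairing with time-derivatives of $\rho$, pin down $\p_t u^f(a)$ in $H^{-1}(M)$, as abstract elements coordinatized by known functionals; the same computation applied to the $\Src$-controls recognizes an $h$ whose state matches, and such an $h$ exists by controllability. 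Because every quantity here is produced by one and the same procedure from $\Lambda_{\Src,\Rec}^T$, two manifolds sharing this operator yield the same $\Lambda_{\Src,\Rec}^\infty$.

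The main obstacle, and the point where the budget $T > T_0 + 2D$ is spent, is the bookkeeping of this identification. Blagove{\v{s}}{\v{c}}enski{\u\i}'s identity at final time $a$ involves the time-reflected operators $R$ and $J$ of Lemma \ref{lem_blago}, which reach to time $2a$, so faithful $\Rec$-probing ($a > D$) forces the measured window past $2D$, while reproducing and then re-observing the state costs the additional $T_0$. Making these requirements compatible within $(0,T)$, and checking that the iterated resets consistently propagate the now-determined state forward to cover all of $(0,\infty)$, is the technical crux; the underlying analytic input (finite speed of propagation, energy estimates, and the two controllability facts) is standard.
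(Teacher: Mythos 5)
Your strategy is genuinely different from the paper's, but as structured it cannot be carried out under the stated hypothesis: the time budget fails. Write $D := \max_{x \in M} d(x,\Rec)$. Your reset at time $a$ must satisfy three constraints simultaneously. First, to read the inner products $(u^\rho(a), u^f(a))_{L^2(M)}$ and $(u^\rho(a), u^{h_s}(a))_{L^2(M)}$ from Lemma \ref{lem_blago} you need the Neumann traces of the $\Src$-sourced waves on $(0,2a)\times\Rec$ (the operator $J$ integrates up to time $2a$), hence $2a \le T$. Second, the matching must take place at a time after the control has finished acting: $h$ is supported in $(0,T_0)$ and cannot be translated to negative times, so the comparison time is at least $T_0$; moreover the reset produces new data only on $(a, a+T-T_0)$, which reaches past $T$ only if $a > T_0$. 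Third, density of the probing waves $u^\rho(a)$ in $L^2(M)$ requires $a \ge D$. Together these force $T > 2\max(T_0, D)$, which is strictly stronger than the hypothesis $T > T_0 + 2D$ whenever $T_0 > 2D$ --- a regime the lemma allows (take $\Rec$ large so that $D$ is small, $\Src$ small so that $T_0$ is large, and $T = T_0 + 2D + \epsilon$: then no admissible $a$ exists). Your closing paragraph flags exactly this bookkeeping as the crux, but the accounting you give there (``$2D$ for probing plus $T_0$ for reproducing'') does not correspond to your scheme: in it, $T_0$ enters inside the factor of two, via $2a \le T$ and $a > T_0$, not additively. (A secondary, fixable issue: pairings against a family that is merely dense in $L^2(M)$ do not by themselves determine the $H^{-1}(M)$ component $\p_t u^f(a)$; one needs test functions dense in $H^1_0(M)$, or to match positions over an interval of final times.)

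The paper avoids the doubling because it never identifies the state of $u^f$ at any interior time. It splits $f = h + h'$ with $h$ supported in $(0,2\delta)$ and $h'$ in $(\delta, T+\delta)$, handles $h'$ by time translation, and characterizes a replacement control $\tilde h \in L^2((\delta, T_0+\delta)\times\Src)$ by the directly checkable condition $\Lambda^T_{\Src,\Rec} h = \Lambda^T_{\Src,\Rec} \tilde h$ on $(T_0+\delta, T)$. Exact controllability gives existence of one such $\tilde h$; the step your proposal is missing is that Tataru's unique continuation, applied to $u^h - u^{\tilde h}$ (vanishing Dirichlet data everywhere and vanishing Neumann data on $(T_0+\delta,T)\times\Rec$, a time window of length $> 2D$), shows that \emph{every} admissible $\tilde h$ satisfies $u^{\tilde h}(t) = u^h(t)$ for $t > T - D$, so the criterion needs no state identification at all. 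Since $\tilde h$ is supported $\delta$ later than $h$, translating its known data extends $\Lambda^T_{\Src,\Rec}$ to $\Lambda^{T+\delta}_{\Src,\Rec}$, and one iterates. Here the budget is exactly $T_0$ (existence of the control) plus $2D$ (length of the unique continuation window), with no Blagove{\v{s}}{\v{c}}enski{\u\i} identity and hence no reach to doubled times. Replacing your inner-product matching criterion by this Neumann-trace matching plus unique continuation would repair your argument.
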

\begin{proof}
Let $\delta \in (0, T_0)$ satisfy 
\begin{align*}
T > T_0 + 2 \max_{x \in M} d(x, \Rec) + \delta,
\end{align*}
and let $f \in C_0^\infty((0, T+\delta) \times \Src)$.
We choose $h \in C_0^\infty((0, 2\delta) \times \Src)$
and $h' \in C_0^\infty((\delta, T+\delta) \times \Src)$
such that $f = h + h'$.
As the coefficients of (\ref{eq_wave}) are time independent,
we may translate in time and see that 
$\Lambda_{\Src, \Rec} h'(t)$, $t \in (\delta, T+\delta)$, is determined 
by $\Lambda_{\Src, \Rec}^T$.

We recall that $\Lambda_{\Src, \Rec}^T : L^2((0, T) \times \Src) \to H^{-1}((0, T) \times \Rec)$
is continuous \cite{Lasiecka1986}.
Let us suppose that $\tilde h \in L^2((\delta, T_0 + \delta) \times \Src)$
satisfies 
\begin{align}
\label{def_h_tilde}
\Lambda_{\Src, \Rec}^T h(t) = \Lambda_{\Src, \Rec}^T \tilde h(t),
\quad \text{for $t \in (T_0 + \delta, T)$}.
\end{align}
Notice that such a function $\tilde h$ exists. Indeed, 
by exact controllability (\ref{def_exact_controllability}) there is 
$\tilde h \in L^2((\delta, T_0 + \delta) \times \Src)$
%\{ f \in L^2((0, \infty) \times \p M;\ \supp(f) \subset [\delta, T_0 + \delta] \times \Src\}
such that 
\begin{align*}
(u^h(T_0 + \delta), \p_t u^h(T_0 + \delta)) = 
(u^{\tilde h}(T_0 + \delta), \p_t u^{\tilde h}(T_0 + \delta)).
\end{align*}
As also $h(t) = \tilde h(t) = 0$ for $t > T_0 + \delta$,
we have $u^h(t) = u^{\tilde h}(t)$
for $t > T_0 + \delta$.
We have shown that there is $\tilde h \in L^2((\delta, T_0 + \delta) \times \Src)$
satisfying (\ref{def_h_tilde}).

By (\ref{def_h_tilde}) the Cauchy data of $u^h$ and $u^{\tilde h}$ coincide on $(T_0 + \delta, T) \times \Rec$. 
Thus Tataru's unique continuation 
\cite{Tataru1995} implies that $u^h(t + T_0 + \delta) = u^{\tilde h}(t + T_0 + \delta)$
for $t$ near $(T - T_0 - \delta) / 2$,  see e.g. \cite[Th. 3.10]{Katchalov2001}. 
In particular,
$u^h(t) = u^{\tilde h}(t)$
for $t > T - \max_{x \in M} d(x, \Rec)$.
Analogously to the above case of $\Lambda_{\Src, \Rec}h'(t)$,
we see that $\Lambda_{\Src, \Rec}h(t) = \Lambda_{\Src, \Rec}\tilde h(t)$, $t \in (T, T+\delta)$,
is determined by $\Lambda_{\Src, \Rec}^T$.
In particular, we have determined 
\begin{align*}
\Lambda_{\Src, \Rec} f(t) = \Lambda_{\Src, \Rec}h(t) + \Lambda_{\Src, \Rec}h'(t),
\quad t \in (T, T+\delta).
\end{align*}
That is, we have shown that $\Lambda_{\Src, \Rec}^T$ determines
$\Lambda_{\Src, \Rec}^{T+\delta}$.
By iterating the above argument we see that $\Lambda_{\Src, \Rec}^\infty$
is determined.
\end{proof}

\section{Local reconstruction of the manifold}
\label{sec_local_construction}

In this section we describe a method to reconstruct
$(M_\Rec, g)$ and $(M_B, g)$ from $\Lambda_{\Src, \Rec}$
and $\Lambda_{\Src, B}$, respectively. 
We will first consider the local reconstruction method 
under the additional assumption that the functions
$\sigma_\Rec$ and $\sigma^B$ are known, %(Section \ref{sec_reconstruction_g})
and then show how these functions can be reconstructed from 
the data $\Lambda_{\Src, \Rec}$ and $\Lambda_{\Src, B}$, respectively. % (Section \ref{sec_cut_locus}).
Let us recall that $\sigma_\Rec(y)$ indicates the distance when the normal geodesic
starting from $y \in \Rec$ hits to the boundary or to a point on the cut locus,
see (\ref{Matti's new formula 1}).
The main difficulty when reconstructing $\sigma_\Rec(y)$
is that the normal geodesic %$\gamma(\cdot; y, \nu)$, $y \in \Rec$,
may intersect $\p M$ tangentially and this is hard to detect from the data,
see Figure 1. 
To deal with this difficulty we present a method that is based on ``perturbing'' the boundary $\p M$. 

%When we reconstruct the unknown manifold by extending the known subset of $M$
%in small steps, this function is used to detect when we come close to the boundary
%of the manifold. 

We will next give a series of lemmas that is common for the 
reconstruction method of $\sigma_\Rec$ and $\sigma^B$
and that of $(M_\Rec, g)$ and $(M_B, g)$.
We start by introducing the modified distance function $d_h$.
%
% 
% We note that the problem analogous to the construction of 
% $\sigma_\Rec$ is encountered in various inverse problems
%with local or partial data where the solution requires the reconstruction or detection
%of the unknown part of the boundary.
%
%
%
%
%Our method to reconstruct the function $\sigma_\Rec$ from the data $\Lambda_{\Src, \Rec}$
%is rather complicated. The cause of difficulty is that  $\gamma(\cdot; y, \nu)$ may intersect $\p M$ tangentially and this seems to be hard to detect from the data. 
%To our knowledge this is not handled properly in the previous literature, 
%see e.g. \cite{Kurylev2000} where the case $\Rec = \Src$ is considered\footnote{
%The proof of \cite[Lem. 4.3]{Kurylev2000} is flawed since
%$X(y, t)$ may contain multiple points if $s \mapsto \exp_y(s\nu_y)$
%intersects $\p M$ tangentially before time $t$, see Figure 1.
%See \cite{Kurylev2000} for the notation. 
%}.
%Our method is based on ``perturbing'' the boundary $\p M$ by using the modified distance function $d_h$ that we will define next. 
Let $\Gamma \subset M$ and $h : \Gamma \to \R$.
We define 
\begin{align*}
d_h(x,y) &:= d(x, y) - h(y), \quad x \in M,\ y \in \Gamma,
\\
d_h(x,\Gamma) &:= \inf_{y \in \Gamma} d_h(x, y), \quad x \in M,
\end{align*}
where $d$ is the Riemannian distance function of $(M, g)$.
Moreover, we define the modified domain of influence
\begin{align*}
M(\Gamma, h) & := \{x \in M;\ d_h(x, \Gamma) \le 0\},
\end{align*}
and denote for $T > 0$
\def\B{\mathcal B}
\begin{align*}
\B(\Gamma, h; T) := \{(t, y) \in (0, T) \times \Gamma;\ T - h(y) < t \}. 
\end{align*}

To simplify the notation, we define $M(\Gamma, r)$ 
for a constant $r \in (0, \infty)$ by $M(\Gamma, h)$ 
where $h(y) = r$, $y \in \Gamma$. 
Notice that if $h \in C(\bar \Gamma)$ then
\begin{align*}
M(\Gamma, h) = \{x \in M;\ \text{there is $y \in \bar \Gamma$ such that $d(x, y) \le h(y)$}\},
\end{align*}
and our definition coincides with the definition of the domain of influence in \cite{Oksanen2011}.
In particular, for $\Gamma = \{y\}$ the set
$M(\Gamma, h)$ is the closed geodesic ball with radius $h(y)$.
In this case, we denote also $M(y, h) := M(\Gamma, h)$.

We will show first that $\Lambda_{\Src, \Rec}$ and $\Lambda_{\Src, B}$ determine
certain relations between domains of influences, and 
then that these relations determine $(M_\Rec, g)$
and $(M_B, g)$. The latter step is purely geometric. 

\subsection{From weakly convergent sequences of waves to relations between domains of influences}

%In the sequel we will need the following continuity properties. 
%Let $\Gamma \subset M$ and $h \in C(\bar \Gamma)$. Then the functions 
%\begin{align*}
%(x, y, h) \mapsto d_h(x, y), \quad (x, h) \mapsto d_h(x, \Gamma)
%\end{align*}
%are continuous on $M \times \bar \Gamma \times C(\bar \Gamma)$ and on $M \times C(\bar \Gamma)$ respectively.
%In particular, $M(\Gamma, h + t)$ is closed. 
%{\footnotesize
%\begin{proof}
%The first claim is trivial. By the first claim and compactness of $\bar \Gamma$ there is $y_1 \in \bar \Gamma$
%such that $d_{h_1}(x_1, \Gamma) = d_{h_1}(x_1, y_1)$.
%Thus
%\begin{align*}
%d_{h_2}(x_2, \Gamma) - d_{h_1}(x_1, \Gamma)
%&\le 
%d_{h_2}(x_2, y_1) - d_{h_1}(x_1, y_1)
%\\&=  
%d(x_2, y_1) - d(x_1, y_1) - \ll(h_2(y_1) - h_1(y_1)\rr)
%\\&\le 
%d(x_2, x_1) + \norm{h_2 - h_1}_{C(\bar \Gamma)}.
%\end{align*}
%\end{proof}
%}

Tataru's unique continuation result \cite{Tataru1995} implies that 
the wave equation (\ref{eq_wave}) is approximately controllable, that is, we have the following lemma. 
\begin{lemma}
\label{lem_uniq_cont}
Let $T > 0$ and suppose that
$\Gamma$ is open either in $\p M$ or in $M^\inter$
and that $h \in C(\bar \Gamma)$ satisfies $h \le T$ pointwise.
In the case when $\Gamma \subset M^\inter$ suppose, moreover, that $h > 0$ pointwise.
Then 
\begin{align}
\label{the_dense_set}
\{ u^f(T);\ f \in C_0^\infty(\B(\Gamma, h; T)) \}
\end{align}
is dense in 
\begin{align*}
L^2(M(\Gamma, h)) := \{v \in L^2(M);\ \supp(v) \subset M(\Gamma, h) \}.
\end{align*}
\end{lemma}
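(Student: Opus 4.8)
The plan is to prove the lemma by the standard duality-plus-unique-continuation scheme. First I would record the easy inclusion: by finite speed of propagation every $u^f(T)$ with $f \in C_0^\infty(\B(\Gamma, h; T))$ is supported in $M(\Gamma, h)$. Indeed, a source active at $(t, y)$ with $t > T - h(y)$ can influence the time-$T$ slice only at points $x$ with $d(x, y) \le T - t < h(y)$, so that $d_h(x, y) < 0$ and $x \in M(\Gamma, h)$. Hence the set \eqref{the_dense_set} is contained in $L^2(M(\Gamma, h))$, and only density remains to be shown.

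For density I would argue by duality. Let $v \in L^2(M(\Gamma, h))$ be orthogonal to \eqref{the_dense_set} and introduce the backward solution $w$ of $\p_t^2 w - \Delta_g w = 0$ on $(0,T) \times M$ with $w|_{(0,T) \times \p M} = 0$ and terminal data $w(T) = 0$, $\p_t w(T) = v$; its Neumann trace $\p_\nu w$ lies in $L^2((0,T) \times \p M)$ by the hidden regularity of \cite{Lasiecka1986}. Applying Green's identity on $(0,T) \times M$ and using $u^f(0) = \p_t u^f(0) = 0$ together with $w(T) = 0$, only the time-$T$ slice term $(u^f(T), v)_{L^2(M)}$ survives among the Cauchy terms, and I would match it against the remaining boundary or interior contribution: in the boundary case $u^f|_{\p M} = f$ and $w|_{\p M} = 0$ give $(u^f(T), v)_{L^2(M)} = \pm(f, \p_\nu w)_{L^2((0,T) \times \p M)}$, whereas in the interior-source case the lateral term drops out and the source integral yields $(u^f(T), v)_{L^2(M)} = (f, w)_{L^2((0,T) \times \Gamma)}$. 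Since $f$ ranges over a set dense among functions supported in $\B(\Gamma, h; T)$, the orthogonality forces $\p_\nu w = 0$ on $\B(\Gamma, h; T)$ in the boundary case and $w = 0$ on $\B(\Gamma, h; T)$ in the interior case; in the latter $\B(\Gamma, h; T)$ is open in spacetime, so all derivatives of $w$ vanish there as well.

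It then remains to upgrade this vanishing of Cauchy data on $\B(\Gamma, h; T)$ to $v = 0$ on $M(\Gamma, h)$. I would first reverse time, setting $\tilde w(t, x) := w(T - t, x)$, so that $\tilde w$ solves the wave equation with $\tilde w(0) = 0$, $\p_t \tilde w(0) = -v$, and vanishing Cauchy data on $\{(t, y);\ y \in \Gamma,\ 0 < t < h(y)\}$. Because $\tilde w(0) \equiv 0$ on $M$, the odd reflection $\hat w(t) := \tilde w(t)$ for $t \ge 0$ and $\hat w(t) := -\tilde w(-t)$ for $t < 0$ is again a solution of the wave equation, now on $(-T, T) \times M$, with vanishing Cauchy data on the time-symmetric set $\{(t, y);\ y \in \Gamma,\ |t| < h(y)\}$. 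For each fixed $y \in \Gamma$ this is an interval of radius $h(y)$ centered at $t = 0$, so Tataru's sharp unique continuation theorem \cite{Tataru1995}, in its double-cone form (see \cite[Th.~3.10]{Katchalov2001}), yields $\hat w = 0$ on the open set $\{(t, x);\ d(x, y) < h(y) - |t|\}$. Differentiating at $t = 0$ gives $\p_t \hat w(0, x) = -v(x) = 0$ whenever $d(x, y) < h(y)$. Taking the union over $y \in \Gamma$ shows $v = 0$ on $M(\Gamma, h)^\inter$, hence $v = 0$ in $L^2(M(\Gamma, h))$, which proves the density.

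The main obstacle I anticipate is this final, purely geometric step: converting the slab of vanishing Cauchy data parametrized by the variable function $h$ into a statement covering all of $M(\Gamma, h)$. The one-sided time interval $(0, h(y))$ does not by itself reach the slice $t = 0$ through the double cone, which is exactly why the initial condition $\tilde w(0) = 0$ and the odd-reflection trick are essential; verifying that the reflected field is a genuine solution and that the union of cones exhausts $M(\Gamma, h)^\inter$, matching the domain-of-influence convention of \cite{Oksanen2011}, is where the care is needed. The interior-source case requires only the interior version of Tataru's theorem together with the hypothesis $h > 0$, which guarantees that the time-fibers of $\B(\Gamma, h; T)$ are nonempty and open.
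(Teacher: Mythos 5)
Your route is genuinely different from the paper's. The paper gives no self-contained proof here: it records the finite-speed inclusion, invokes the known constant-$h$ case of approximate controllability (\cite[Th.~3.10]{Katchalov2001}), and reduces variable $h\in C(\bar\Gamma)$ to that case by approximating $h$ with piecewise constant functions, citing \cite[Lemmas 4.2 and 4.3]{Oksanen2011}. You instead redo the underlying machinery directly for variable $h$: duality against the backward solution with terminal data $(0,v)$, hidden regularity for $\p_\nu w$, vanishing of the Cauchy data on $\B(\Gamma,h;T)$, odd reflection in time through the slice where $\tilde w$ vanishes, and a pointwise-in-$y$ application of Tataru's double-cone uniqueness. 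Those steps are all correct (the reflection is legitimate precisely because $\tilde w(0)=0$, and it is the same device used in the proof of the cited constant-$h$ result), and your use of $h>0$ in the interior case is the right one.

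The gap is in the final covering step, which as literally stated is false. From the cones you obtain $v=0$ on $U:=\bigcup_{y\in\Gamma}\{x\in M:\ d(x,y)<h(y)\}$, and you then claim $U$ contains $M(\Gamma,h)^\inter$. Counterexample: let $M$ be the closed Euclidean disc of radius $r$, let $\Gamma=\p M$ and $h\equiv r$. Then $M(\Gamma,h)=M$, but the center lies at distance exactly $r$ from every $y\in\Gamma$, so it belongs to $M(\Gamma,h)^\inter\setminus U$. What your argument needs, and what is true, is that $M(\Gamma,h)\setminus U$ has measure zero. Indeed, if $x\notin U$ but $x\in M(\Gamma,h)$, then by continuity of $d(x,\cdot)-h(\cdot)$ on $\bar\Gamma$ one gets $d_h(x,\Gamma)=0$, so the exceptional set lies in the level set $\{x:\ d_h(x,\Gamma)=0\}$ (apart from $\bar\Gamma$, which is harmless in both cases); and this level set is null because at any such $x$ one can slide along a shortest path toward a minimizing point $y^*\in\bar\Gamma$ and decrease the $1$-Lipschitz function $d_h(\cdot,\Gamma)$ at unit rate, which is incompatible with a vanishing differential at a density point of the level set. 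This null-set lemma (or, alternatively, the piecewise-constant exhaustion of \cite[Lemmas 4.2 and 4.3]{Oksanen2011}, which is exactly the tool the paper cites for this purpose) must actually be supplied: you correctly flagged this as the delicate point, but you did not close it, and the inclusion you wrote in its place does not hold.
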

Notice that the finite speed of propagation implies that
$u^f(T)$ is supported in $M(\Gamma, h)$ if $f$ is supported in $\B(\Gamma, h; T)$.
In the literature Lemma \ref{lem_uniq_cont} is usually proved only 
in the case of a constant function $h$, see e.g. \cite[Th. 3.10]{Katchalov2001}.
However, the case $h \in C(\bar \Gamma)$ can be reduced to this case by 
approximating $h$ with piecewise constant functions, 
see \cite[Lemmas 4.2 and 4.3]{Oksanen2011}.

\begin{lemma}
\label{lem_domi_test}
Let $T > 0$, $J \in \N$, $j = 1, \dots, J$ 
and let $\Gamma_j$ be open either in $\p M$ or in $M^\inter$.
Let $h_j \in C(\bar \Gamma_j)$ satisfy $h_j \le T$ pointwise
and, in the case $\Gamma_j \subset M^\inter$, also $h_j > 0$.
We define $\Gamma := \bigcup_{j=1}^J \Gamma_j$,
\begin{align}
\label{max_h}
h(y) := \max\{ h_j(y); \text{$j$ satisfies $\bar \Gamma_j \ni y$} \}
\end{align}
and denote $\mathcal U_1(f) := u^f(T)$.
Let $\Gamma_0$ be open either in $\p M$ or in $M^\inter$ and let $s_0 \in (0, T]$.
Let $\mu \in C^\infty(\Gamma \cup \Gamma_0)$ be strictly positive. 
Then the following properties are equivalent:
\begin{itemize}
\item[(i)] $M(\Gamma_0, s_0) \subset \bigcup_{j=1}^J  M(\Gamma_j, h_j)$.
\item[(ii)] For all $f_0 \in C_0^\infty(\B(\Gamma_0, s_0; T))$
there is $(f_j)_{j=1}^\infty \subset C_0^\infty(\B(\Gamma, h; T))$
such that $(\mathcal U_1(\mu(f_0 - f_j)))_{j=1}^\infty$ 
tends to zero weakly in $L^2(M)$.
\end{itemize}
\end{lemma}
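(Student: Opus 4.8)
The plan is to derive both implications from the single geometric identity $M(\Gamma, h) = \bigcup_{j=1}^J M(\Gamma_j, h_j)$, combined with the approximate controllability of Lemma \ref{lem_uniq_cont} and finite speed of propagation. First I would record three preliminary facts. By finite speed of propagation, every $f$ supported in $\mathcal B(\Gamma, h; T)$ yields a wave with $\supp u^f(T) \subset M(\Gamma, h)$; in particular every wave $u^{\mu f_j}(T)$ appearing in (ii) lies in the closed subspace $L^2(M(\Gamma, h))$. Next, from the definition (\ref{max_h}) of $h$ one checks directly that $M(\Gamma, h) = \bigcup_{j=1}^J M(\Gamma_j, h_j)$ and that $\mathcal B(\Gamma_j, h_j; T) \subset \mathcal B(\Gamma, h; T)$ for every $j$. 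Finally, since $\mu$ depends only on the spatial variable and is smooth and strictly positive, $f \mapsto \mu f$ is a bijection of each space $C_0^\infty(\mathcal B(\Gamma_j, h_j; T))$ onto itself, so Lemma \ref{lem_uniq_cont} gives that $\{u^{\mu f}(T);\ f \in C_0^\infty(\mathcal B(\Gamma_j, h_j; T))\}$ is dense in $L^2(M(\Gamma_j, h_j))$, and likewise for the pair $(\Gamma_0, s_0)$.

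For the implication (i) $\Rightarrow$ (ii) I would show that these waves fill $L^2(M(\Gamma, h))$. Since $L^2(M(\Gamma, h)) = \sum_{j=1}^J L^2(M(\Gamma_j, h_j))$ (any $v$ supported in the union splits according to a measurable partition of the union subordinate to the sets $M(\Gamma_j, h_j)$), each $v$ is a finite sum $\sum_{j=1}^J v_j$ with $v_j \in L^2(M(\Gamma_j, h_j))$; approximating each $v_j$ by $u^{\mu g_j}(T)$ with $g_j \in C_0^\infty(\mathcal B(\Gamma_j, h_j; T))$ and setting $F := \sum_{j=1}^J g_j \in C_0^\infty(\mathcal B(\Gamma, h; T))$ shows that $\{u^{\mu F}(T)\}$ is dense in $L^2(M(\Gamma, h))$. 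Now, given $f_0$, condition (i) gives $u^{\mu f_0}(T) \in L^2(M(\Gamma_0, s_0)) \subset L^2(M(\Gamma, h))$, so it is the strong limit of $u^{\mu f_j}(T)$ for some $(f_j)_{j=1}^\infty \subset C_0^\infty(\mathcal B(\Gamma, h; T))$; then $\mathcal U_1(\mu(f_0 - f_j)) = u^{\mu f_0}(T) - u^{\mu f_j}(T) \to 0$ strongly, hence weakly, and (ii) holds.

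For (ii) $\Rightarrow$ (i) I would prove the contrapositive. Assume $M(\Gamma_0, s_0) \not\subset M(\Gamma, h)$ and pick $x_0 \in M(\Gamma_0, s_0) \setminus M(\Gamma, h)$. Since $M(\Gamma, h)$ is closed, a ball $B(x_0, r)$ misses it; since $M(\Gamma_0, s_0) = \{x;\ d(x, \bar\Gamma_0) \le s_0\}$ is the closure of its interior (follow a minimizing geodesic from a nearest point of $\bar\Gamma_0$ toward $x_0$ to obtain interior points converging to $x_0$), there is a nonempty open set $U \subset B(x_0, r) \cap M(\Gamma_0, s_0)$ with $U \cap M(\Gamma, h) = \emptyset$. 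By density on $(\Gamma_0, s_0)$ I can then choose $f_0 \in C_0^\infty(\mathcal B(\Gamma_0, s_0; T))$ with $u^{\mu f_0}(T)$ not vanishing a.e. on $U$, so that $w := \mathbf 1_{M \setminus M(\Gamma, h)}\, u^{\mu f_0}(T)$ is a nonzero element of $L^2(M)$ supported off $M(\Gamma, h)$. For every $(f_j)_{j=1}^\infty \subset C_0^\infty(\mathcal B(\Gamma, h; T))$ the waves $u^{\mu f_j}(T)$ are supported in $M(\Gamma, h)$ and hence orthogonal to $w$, so $(\mathcal U_1(\mu(f_0 - f_j)), w)_{L^2(M)} = \norm{w}_{L^2(M)}^2 \ne 0$ for all $j$, which rules out weak convergence to zero and contradicts (ii).

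The step I expect to be the main obstacle is the second direction, and within it the measure-theoretic point: a single point $x_0 \in M(\Gamma_0, s_0) \setminus M(\Gamma, h)$ is not enough, one needs a set of positive measure there to force some wave $u^{\mu f_0}(T)$ to be genuinely nonzero off $M(\Gamma, h)$. This is why I would establish that $M(\Gamma_0, s_0)$ equals the closure of its interior, so that interior points accumulate at $x_0$ inside the open complement of the closed set $M(\Gamma, h)$. A secondary technical point affecting the density step in the first direction is that the function $h$ in (\ref{max_h}) need not be continuous where the index set $\{j;\ \bar\Gamma_j \ni y\}$ jumps; I therefore avoid applying Lemma \ref{lem_uniq_cont} directly to $(\Gamma, h)$ and instead pass through the pieces $(\Gamma_j, h_j)$ and the decomposition $L^2(M(\Gamma, h)) = \sum_{j=1}^J L^2(M(\Gamma_j, h_j))$.
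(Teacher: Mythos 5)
Your proof is correct and follows essentially the same route as the paper: approximate controllability (Lemma \ref{lem_uniq_cont}) plus finite speed of propagation give (i) $\Rightarrow$ (ii) via strong approximation, and (ii) $\Rightarrow$ (i) is proved by contraposition, producing a nonempty open subset of $M(\Gamma_0,s_0)$ disjoint from the closed set $M(\Gamma,h)$ by following a shortest path from a nearest point of $\bar\Gamma_0$, and then testing against a fixed $L^2$ function supported off $M(\Gamma,h)$. The only (immaterial) differences are that the paper tests against the indicator $1_V$ where you use a truncated wave, and that your decomposition $L^2(M(\Gamma,h))=\sum_{j=1}^J L^2(M(\Gamma_j,h_j))$ is a careful way to sidestep the possible discontinuity of the function $h$ in (\ref{max_h}), which the paper handles by invoking Lemma \ref{lem_uniq_cont} directly.
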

\begin{proof}
Notice that $f \mapsto \mu f$ is a bijection on 
$C_0^\infty(\B(\Gamma_0, s_0; T))$ and also on $C_0^\infty(\B(\Gamma, h; T))$.
Thus we lose no generality by assuming that $\mu = 1$ identically. 
We have
\begin{align*}
M(\Gamma, h) 
&= 
\{x \in M;\ \text{there is $y \in \bar \Gamma$ s.t. $d(x, y) \le h(y)$}\}
%\\&= 
%\bigcup_{j=1}^J \{x \in M;\ \text{there is $y \in \bar \Gamma_j$ s.t. $d(x, y) \le h_j(y)$}\}
\\&=
\bigcup_{j=1}^J  M(\Gamma_1, h_j).
\end{align*}
The implication from (i) to (ii) follows from Lemma \ref{lem_uniq_cont}.
We will now show that (ii) implies (i).
We denote 
\begin{align*}
&M_0 := M(\Gamma_0, s_0),
\quad 
M_1 := M(\Gamma, h),
\\
&S_0 := \B(\Gamma_0, s_0; T), 
\quad 
S_1 := \B(\Gamma, h; T).
\end{align*}
Let us assume that (i) does not hold
and let $x \in M_0 \setminus M_1$. 
As $M_1$ is closed, there is a neighborhood $U$ of $x$
such that $U \cap M_1 = \emptyset$.
We will show next that $U \cap M_0^{\inter}$ is nonempty. 

If $x \in \bar \Gamma_0$, then points close to $x$ are in $M_0$ since $s_0 > 0$.
Let us now assume that $x \notin \bar \Gamma_0$.
Then there is a path $\gamma$ from $x$ to a point $y_0 \in \bar \Gamma_0$
such that its length satisfies $0 < l(\gamma) \le s_0$.
We may assume that $\gamma$ is a shortest path between $x$ and $y_0$
and that it has unit speed \cite{Alexander1981}.
Then $\gamma(t) \in U \cap M_0^{\inter}$ for small $t > 0$.

We have shown that $U \cap M_0^{\inter}$ is nonempty.
Hence there is a nonempty open $V \subset M_0$ such that $V \cap M_1 = \emptyset$.
By Lemma \ref{lem_uniq_cont} there is a smooth function 
$f_0$ supported in $S_0$ such that $\int_V u^{f_0}(T) dx \ne 0$.
However, by finite speed of propagation $u^{f}(T)|_V = 0$ for any 
$f$ supported in $S_1$. Thus 
\begin{equation*}
(u^{f_0}(T) - u^{f}(T), 1_V) = (u^{f_0}(T), 1_V) \ne 0,
\end{equation*}
for all $f$ supported in $S_1$ and (ii) does not hold.
\end{proof}

\begin{proposition}
\label{prop_from_weak_conv_to_rels}
Let $\Src, \Rec \subset \p M$ and $B \subset M^\inter$
be open, non-empty sets with smooth boundaries
and suppose that the Hassell-Tao condition (\ref{def_spectral_cond}) holds
with the set $\Src$.
Then $\Lambda_{\Src, \Rec}$ together with the smooth structure of 
$\bar \Src \cup \bar \Rec$ determines the relation  
\begin{align}
\label{boundary_distance_relation_Rec}
\{ (y_0, y_1, r, s, t) \in \Rec^2 \times (0, \infty)^3 ;\ 
M(y_0, r) \subset M(\Rec, s) \cup M(y_1, t) \}.
\end{align}
Moreover, $\Lambda_{\Src, B}$ together with the smooth structure of 
$\bar \Src \cup \bar B$ determines the relation
\begin{align}
\label{boundary_distance_relation_B}
\{ (y_0, y_1, r, s, t) \in \p B^2 \times (0, \infty)^3 ;\ 
M(y_0, r) \subset M(B, s) \cup M(y_1, t) \}.
\end{align}
\end{proposition}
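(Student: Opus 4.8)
I want to prove that the boundary measurement operator $\Lambda_{\Src,\Rec}$ (respectively $\Lambda_{\Src,B}$), together with the smooth structure of the relevant receiver set, determines the inclusion relation between domains of influence stated in (\ref{boundary_distance_relation_Rec}) (respectively (\ref{boundary_distance_relation_B})). The key observation is that this is exactly the situation governed by Lemma~\ref{lem_domi_test}: that lemma characterizes an inclusion $M(\Gamma_0, s_0) \subset \bigcup_{j=1}^J M(\Gamma_j, h_j)$ in terms of whether a certain sequence $(\mathcal U_1(\mu(f_0 - f_j)))_{j=1}^\infty = (u^{\mu(f_0-f_j)}(T))_{j=1}^\infty$ can be made to converge weakly to zero in $L^2(M)$. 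So the whole proof reduces to verifying that, under the Hassell-Tao condition, the data at hand lets us \emph{test} this weak convergence.

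\medskip
\noindent\textbf{Steps, in order.} First I would specialize Lemma~\ref{lem_domi_test} to the concrete geometry of the relation. For (\ref{boundary_distance_relation_Rec}) I take $J = 2$ with $\Gamma_0 = \{y_0\}$, $s_0 = r$, and the two pieces $\Gamma_1 = \Rec$ with $h_1 \equiv s$, $\Gamma_2 = \{y_1\}$ with $h_2 \equiv t$; then the lemma's condition (i) is precisely the inclusion $M(y_0, r) \subset M(\Rec, s) \cup M(y_1, t)$ defining the relation. For (\ref{boundary_distance_relation_B}) I do the same with $\Rec$ replaced by $B$ and the boundary points taken on $\p B$. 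In both cases condition (ii) says: for every $f_0$ supported in the relevant box $\B(\Gamma_0, s_0; T)$ there is a sequence $f_j$ supported in $\B(\Gamma, h; T)$ with $u^{\mu(f_0 - f_j)}(T) \rightharpoonup 0$. Thus determining the relation is equivalent to being able to decide, from the data, whether such a testing sequence exists. For the time $T$ I take any $T$ strictly larger than $\max_{x\in M} d(x,\Rec)$ (resp. $\max_{x\in M}d(x,B)$) so that Lemma~\ref{lem_weak_conv} applies; this is available because Section~\ref{sec_controllability} has already shown $\Lambda_{\Src,\Rec}^T$ determines $\Lambda_{\Src,\Rec}^\infty$ under our hypotheses.

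\medskip
\noindent\textbf{The decision procedure for weak convergence.} The heart of the argument is to show that weak convergence $u^{\mu(f_0-f_j)}(T)\rightharpoonup 0$ can be detected purely from the data. Here I would invoke Lemma~\ref{lem_weak_conv} with $v_l := u^{\mu(f_0 - f_l)}(T)$: the sequence converges weakly to zero iff the boundedness condition (a) and the pairing condition (b) both hold, where both are phrased through inner products $(v_l, u^\psi(T))_{L^2(M)}$ against test waves $u^\psi(T)$ generated from $\Src$. Now Blagove\v s\v censki\u\i's identity (Lemma~\ref{lem_blago}), together with the measure-adjustment computation (\ref{def_tilde_dS}), expresses each such inner product $(u^{\mu(f_0-f_l)}(T), u^\psi(T))_{L^2(M)}$ entirely in terms of $\Lambda_{\Src,\Rec}$ (resp. $\Lambda_{\Src,B}$) applied to the known data $f_0, f_l, \psi$. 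This is where the \emph{source set must be $\Src$}: the test waves $u^\psi(T)$ are driven from $\Src$, while $f_0, f_l$ are supported near $\Rec$ (resp. $B$), and Blagove\v s\v censki\u\i's identity couples the two sets. The remaining subtlety is condition (a), the uniform boundedness over bounded test-wave families, which is \emph{not} directly computable as a single inner product; this is exactly what Proposition~\ref{prop_weak_boundedness} resolves. That proposition, which requires the Hassell-Tao lower bound, gives a spectral criterion equivalent to $L^2(M)$-boundedness of a family $u^{\psi_m}(T)$ phrased through the spaces $E_j$ (the residues of the meromorphic Fourier transform of $\Lambda_{\Src,\Rec}$) and the sine weights $s_j(t)$ — all of which are determined by the data.

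\medskip
\noindent\textbf{The main obstacle.} I expect the crux to be the boundedness condition~(a) of Lemma~\ref{lem_weak_conv}, since weak convergence is strictly stronger than pointwise convergence of pairings precisely when the sequence is unbounded, and boundedness of $v_l = u^{\mu(f_0-f_l)}(T)$ is not a single computable inner product. The whole point of the Hassell-Tao condition (\ref{def_spectral_cond}) enters here: via Proposition~\ref{prop_weak_boundedness} it converts $L^2(M)$-boundedness into the data-expressible spectral series, so that condition~(a) becomes checkable. Once both (a) and (b) are data-expressible, Lemma~\ref{lem_weak_conv} lets the data decide weak convergence, Lemma~\ref{lem_domi_test} translates this into the inclusion, and the relation (\ref{boundary_distance_relation_Rec}) (resp. (\ref{boundary_distance_relation_B})) is thereby determined. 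I would close by remarking that only the smooth structure of $\bar\Src\cup\bar\Rec$ (resp. $\bar\Src\cup\bar B$) and the measure $d\tilde S$ are used — consistent with the fact that $g|_{\p M}$ is not assumed known, the arbitrary positive measure $d\tilde S$ being absorbed by the strictly positive factor $\mu$ as in (\ref{def_tilde_dS}).
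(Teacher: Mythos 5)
Your overall pipeline matches the paper's: Lemma \ref{lem_domi_test} converts inclusions of domains of influence into a weak-convergence test, Lemma \ref{lem_weak_conv} reduces weak convergence to the pairing conditions (a) and (b), Lemma \ref{lem_blago} makes those pairings computable from the data, and Proposition \ref{prop_weak_boundedness} (where Hassell--Tao enters) makes condition (a) checkable. But there is a genuine gap at your very first step: you apply Lemma \ref{lem_domi_test} with $\Gamma_0 = \{y_0\}$ and $\Gamma_2 = \{y_1\}$, whereas the lemma requires $\Gamma_0$ and each $\Gamma_j$ to be \emph{open} in $\p M$ or in $M^\inter$. This is not a cosmetic hypothesis. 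For a singleton, the admissible source class $C_0^\infty(\mathcal B(\{y_0\}, r; T))$ consists of the zero function only (a smooth function on $(0,T)\times\p M$ supported in a set with empty interior vanishes identically), so condition (ii) of the lemma holds vacuously (take $f_j = 0$) while condition (i) can fail: the equivalence simply breaks for point sets, and likewise the approximate controllability behind it (Lemma \ref{lem_uniq_cont}) is meaningless for them. Since the relations (\ref{boundary_distance_relation_Rec}) and (\ref{boundary_distance_relation_B}) are genuinely point-based, a reduction from points to open sets is required, and that reduction is precisely the nontrivial content of the paper's proof. The paper establishes two limiting equivalences: first, $M(y_0,s_0) \subset \bigcup_{j} M(\Gamma_j,h_j)$ holds if and only if for every $\epsilon>0$ there is a neighborhood $\Gamma_0$ of $y_0$ with $M(\Gamma_0,s_0) \subset \bigcup_{j} M(\Gamma_j, h_j + \epsilon)$; second, an inclusion into a union containing $M(y_1,s_1)$ holds if and only if the corresponding inclusion holds with $M(\Gamma_1,s_1)$ for every neighborhood $\Gamma_1$ of $y_1$. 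Both require (short) compactness arguments, and only after them can Lemma \ref{lem_domi_test} be applied, to neighborhoods $\Gamma_0, \Gamma_1$ taken inside $\Rec$ (resp.\ inside $M^\inter$ near $\p B$), to read the relation off from the data.

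Two smaller slips. Lemma \ref{lem_weak_conv} must be applied with $\Gamma = \Src$, the set generating the test waves, so the time requirement is $T > \max_{x\in M} d(x,\Src)$, not $T > \max_{x\in M} d(x,\Rec)$. Also, $r,s,t$ range over all of $(0,\infty)$ while Lemma \ref{lem_domi_test} needs $s_0 \le T$ and $h_j \le T$ pointwise; one should either let $T$ grow (the data here is already $\Lambda_{\Src,\Rec}^\infty$, so no time-continuation from Section \ref{sec_controllability} is needed, contrary to your remark) or, as the paper does, take $T \ge \operatorname{diam}(M)$ and use that $M(\Gamma, r) = M(\Gamma, T) = M$ for $r \ge T$ to cap the parameters at $T$.
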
 
\begin{proof}
Notice that $M(\Gamma, r) = M = M(\Gamma, T)$
for $r \ge T$ and non-empty $\Gamma \subset M$
since $T \ge \operatorname{diam}(M)$.
The claim follows from Lemmas \ref{lem_blago}, \ref{lem_weak_conv} and \ref{lem_domi_test},
Proposition \ref{prop_weak_boundedness} and the following observation.
Let $y_0, y_1 \in M$, $s_0, s_1 > 0$ and
$J \in \N$. Let $\Gamma_j$ be open either in $\p M$ or in $M^\inter$
and $h_j \in C(\bar \Gamma_j)$ for $j = 1, \dots, J$.
Then the following properties are equivalent:
\begin{itemize}
\item[(i)] $M(y_0, s_0) \subset \bigcup_{j=1}^J M(\Gamma_j, h_j)$.
\item[(ii)] For all $\epsilon > 0$ there is a neighborhood $\Gamma_0$ of $y_0$ 
such that 
\begin{align*}
M(\Gamma_0, s_0) \subset \bigcup_{j=1}^J M(\Gamma_j, h_j + \epsilon).
\end{align*}
\end{itemize}
If $y_0 \in \p M$ then we may take $\Gamma_0 \subset \p M$ in (ii).
Moreover, the following properties are equivalent:
\begin{itemize}
\item[(i')] $M(y_0, s_0) \subset M(y_1, s_1) \cup \bigcup_{j=2}^J M(\Gamma_j, h_j)$.
\item[(ii')] For all neighborhoods $\Gamma_1$ of $y_1$ we have
\begin{align*}
M(y_0, s_0) \subset M(\Gamma_1, s_1) \cup \bigcup_{j=2}^J M(\Gamma_j, h_j).
\end{align*}
\end{itemize}
If $y_1 \in \p M$ then we may take $\Gamma_1 \subset \p M$ in (ii').
\end{proof}

\subsection{From relations between domains of influences to distance functions}
\label{sec_boundary_distance_functions}

In this section we prove that the distances 
\begin{align}
\label{evaluation_dist}
d(\gamma(s; y, \nu), z), 
\quad (s, y) \in \NN_\Rec,\ z \in \Rec,
\end{align}
are determined by $\sigma_\Rec$ and the relation (\ref{boundary_distance_relation_Rec}).
Moreover, we prove an analogous result for the relation (\ref{boundary_distance_relation_B}).
To formulate the result, let us recall 
that we have defined for open $\Gamma \subset \p M$ and $y \in \Gamma$,
\begin{align*}
\sigma_\Gamma(y) := \sup \{ s \in (0, \tau_M(y, \nu)];\ d(\gamma(s; y, \nu), \Gamma) = s\}.
\end{align*}
For open $\Gamma \subset M^\inter$ with smooth boundary and $y \in \p \Gamma$ 
we define $\sigma_\Gamma(y)$ by the same formula where $\nu$ 
is now the interior unit normal vector of $M \setminus \Gamma$.
We will show the following lemma.

\begin{lemma}
\label{lem_distance}
Suppose that one of the following is satisfied.
\begin{itemize}
\item[(a)] $\Gamma \subset \p M$ is open and $y_0 \in \Gamma$.
\item[(b)] $\Gamma \subset M^\inter$ is open with smooth boundary and $y_0 \in \p \Gamma$.
\end{itemize}
Let $y_1 \in M$, $t > 0$ and let
$0 < s \le \sigma_{\Gamma}(y_0)$.
Then the following properties are equivalent:
\begin{itemize}
\item[(i)] $d(\gamma(s; y_0, \nu), y_1) \le t$.
\item[(ii)] For all $\epsilon > 0$ there is $\delta > 0$ such that 
\begin{equation*}
M(y_0, s) \subset M(\Gamma, s - \delta) \cup M(y_1, t + \epsilon).
\end{equation*}
\end{itemize}
\end{lemma}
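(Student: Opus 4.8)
The plan is to single out the point $z := \gamma(s; y_0, \nu)$ as the test point that drives both implications. The assumption $0 < s \le \sigma_\Gamma(y_0)$ guarantees $d(z, \Gamma) = s$ (the set of $s'$ with $d(\gamma(s'; y_0, \nu), \Gamma) = s'$ is exactly the interval $(0, \sigma_\Gamma(y_0)]$, by the minimizing property of the normal geodesic). Since in addition $d(z, y_0) \le s$ and $y_0 \in \bar\Gamma$, we get $d(z, y_0) = s = d(z, \Gamma)$. Consequently $z \in M(y_0, s)$, whereas $d(z, \Gamma) = s > s - \delta$ shows $z \notin M(\Gamma, s - \delta)$ for every $\delta > 0$. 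These two observations are what make $z$ the natural witness.

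The implication from (ii) to (i) I would prove by contraposition. If (i) fails, that is $d(z, y_1) > t$, set $\epsilon := (d(z, y_1) - t)/2 > 0$, so that $d(z, y_1) > t + \epsilon$ and hence $z \notin M(y_1, t + \epsilon)$. Together with $z \in M(y_0, s) \setminus M(\Gamma, s - \delta)$ for all $\delta > 0$, this means that $z$ lies in $M(y_0, s)$ but in neither $M(\Gamma, s - \delta)$ nor $M(y_1, t + \epsilon)$, for every $\delta > 0$; thus the inclusion in (ii) fails for this fixed $\epsilon$.

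For the implication from (i) to (ii) the crux is a uniform concentration claim: for every $\epsilon > 0$ there is $\delta > 0$ such that every $x \in M(y_0, s)$ with $d(x, \Gamma) > s - \delta$ satisfies $d(x, z) < \epsilon$. Granting this, given $\epsilon > 0$ I pick the corresponding $\delta$; then any $x \in M(y_0, s)$ either satisfies $d(x, \Gamma) \le s - \delta$, so $x \in M(\Gamma, s - \delta)$, or satisfies $d(x, \Gamma) > s - \delta$, in which case $d(x, y_1) \le d(x, z) + d(z, y_1) < \epsilon + t$ by (i), so $x \in M(y_1, t + \epsilon)$. This is precisely (ii).

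The concentration claim is where the geometry enters and is the step I expect to be the main obstacle. I would argue by contradiction: if it fails, there are $\epsilon_0 > 0$, a sequence $\delta_k \downarrow 0$, and points $x_k \in M(y_0, s)$ with $d(x_k, \Gamma) > s - \delta_k$ and $d(x_k, z) \ge \epsilon_0$. By compactness of $M$, a subsequence converges to some $x_*$ with $d(x_*, y_0) \le s$, $d(x_*, \Gamma) \ge s$, and $d(x_*, z) \ge \epsilon_0$. Since $y_0 \in \bar\Gamma$ forces $d(x_*, \Gamma) \le d(x_*, y_0) \le s$, we obtain $d(x_*, y_0) = d(x_*, \Gamma) = s$, so that $y_0$ is a closest point of $\bar\Gamma$ to $x_*$ and the minimizing geodesic from $x_*$ to $y_0$ has length $s = d(x_*, \Gamma)$. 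Because $y_0$ is an interior point of $\Gamma$ in case (a), or a smooth point of $\p\Gamma$ in case (b), the first variation formula forces this minimizer to meet $\p M$, respectively $\p\Gamma$, orthogonally at $y_0$; as $x_*$ lies on the side into which $\nu$ points, the geodesic is $\gamma(\cdot; y_0, \nu)|_{[0,s]}$, whence $x_* = \gamma(s; y_0, \nu) = z$, contradicting $d(x_*, z) \ge \epsilon_0$. The delicate points here are confirming that the minimizer is an orthogonally incident interior geodesic rather than a tangential or boundary-reflected path, which is exactly where the openness of $\Gamma$, the smoothness of $\p\Gamma$, and the bound $s \le \sigma_\Gamma(y_0)$ are used, and checking that its initial direction at $y_0$ is $\nu$ and not $-\nu$.
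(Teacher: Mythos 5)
Your proof is correct and rests on the same skeleton as the paper's: both arguments identify $z=\gamma(s;y_0,\nu)$ as the witness point, use that it lies in $M(y_0,s)$ but in no $M(\Gamma,s-\delta)$, and reduce everything to the fact that $z$ is the \emph{unique} point of $M$ with $d(\cdot,y_0)=d(\cdot,\Gamma)=s$. The difference is in how the two nontrivial ingredients are discharged. For the uniqueness, the paper simply cites its Lemmas \ref{lem_sigma_test} and \ref{lem_distance_test} (with $h=0$), whose proofs rest on Lemma \ref{lem_orthogonality}; you re-derive it inside a compactness argument via first variation. That first-variation step is exactly the point you flag as delicate, and it is not a one-liner on a manifold with boundary: a minimizer from $x_*$ to $y_0$ need not be a geodesic (it is only $C^1$ and may contain segments running along $\p M$), and ruling out that it leaves $y_0$ tangentially along the boundary is precisely the content of the paper's Lemma \ref{lem_orthogonality}, proved there by an estimate in boundary normal coordinates. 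So to make your argument complete you must either prove that fact or invoke that lemma (which precedes the statement in the paper, so invoking it is legitimate); one then also uses $s\le\sigma_\Gamma(y_0)\le\tau_M(y_0,\nu)$ to see that the minimizer cannot meet $\p M$ before time $s$ and hence coincides with $\gamma(\cdot;y_0,\nu)|_{[0,s]}$ — you place this bound in the right spot. On the other hand, your uniform concentration claim is a genuine improvement in rigor over the paper's write-up: the paper's proof of (i)$\Rightarrow$(ii) argues pointwise (each $x\ne z$ lies in $M(\Gamma,s-\delta_x)$ for some $x$-dependent $\delta_x$) and leaves the extraction of a single $\delta$ implicit, whereas your compactness argument on $M(y_0,s)$ supplies that uniform $\delta$ explicitly.
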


In Section \ref{sec_cut_locus}
we will reconstruct the function $\sigma_\Rec$.
To this end we will consider here also the modified distance functions $d_h$.
Let $\Gamma \subset \p M$ be open and let $h \in C^1(\bar \Gamma)$
satisfy
\begin{align}
\label{grad_h_small}
|\grad_{\p M} h(y)|_g < 1
\quad \text{for all $y \in \bar \Gamma$}.
\end{align}
We define the modified normal vector,
\begin{align*}
%\label{def_Vh}
V(h) := \ll( 1 - |\grad_{\p M} h|_g^2 \rr)^{1/2} \nu - \grad_{\p M} h,
\end{align*}
and the modified distance to a cut point
\begin{align*}
\sigma_\Gamma(y; h) := \sup \{ s \in (0, \tau_M(y, V(h))];\ d_h(\gamma(s; y, V(h)), \Gamma) = s\}.
\end{align*}
Notice that $\sigma_\Gamma(y) = \sigma_\Gamma(y; 0)$.

To unify the notation we define $V(h)$ and $\sigma_\Gamma(y; h)$
also for open $\Gamma \subset M^\inter$ with smooth boundary, $y \in \p \Gamma$
and $h = 0$ by the same formulas. That is $V(h) = \nu$
and $\sigma_\Gamma(y; h) = \sigma_\Gamma(y)$
where $\nu$ is now the interior unit normal vector of $M \setminus \Gamma$.

\begin{lemma}
\label{lem_orthogonality}
Let $\Gamma \subset \p M$ be open, $h \in C^1(\bar \Gamma)$, $x \in M^\inter$ 
and suppose that $y_0 \in \Gamma$ satisfies
\begin{align*}
d_h(x, y_0) = d_h(x,\Gamma).
\end{align*}
Moreover, let $\gamma$ be a unit speed shortest path from $y_0$ to $x$. 
If $h$ satisfies (\ref{grad_h_small})
then there is $\rho > 0$ such that $\gamma((0, \rho)) \subset M^\inter$
and $\gamma|[0, \rho]$ is the geodesic with initial velocity $V(h)$.
\end{lemma}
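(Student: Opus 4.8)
The plan is to extract the initial velocity $\dot\gamma(0)$ from the first order optimality condition satisfied by $y_0$, and then to read off from the bound (\ref{grad_h_small}) that $\gamma$ must leave the boundary transversally. Write $\xi := \dot\gamma(0)$ for the one-sided unit initial velocity of the shortest path, which exists since length minimizers in a manifold with boundary admit one-sided velocities at the endpoints. The first ingredient is the first variation inequality for the distance function: for every $u \in T_{y_0}\p M$, parametrizing a boundary curve $\tau \mapsto c(\tau) \in \Gamma$ with $c(0) = y_0$ and $\dot c(0) = u$, one has
\begin{align*}
\limsup_{\tau \to 0^+} \frac{d(x, c(\tau)) - d(x, y_0)}{\tau} \le -(\xi, u)_g.
\end{align*}
I would prove this by the standard comparison argument: using that $d(y_0, \gamma(\epsilon)) = \epsilon$ and the triangle inequality $d(x, c(\tau)) \le d(x, \gamma(\epsilon)) + d(\gamma(\epsilon), c(\tau))$, and estimating $d(\gamma(\epsilon), c(\tau))$ in normal coordinates centered at $y_0$, where $\gamma(\epsilon) = \exp_{y_0}(\epsilon \xi) + o(\epsilon)$ and $c(\tau) = \exp_{y_0}(\tau u) + o(\tau)$. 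Crucially this uses only that $\gamma$ is a minimizer, not that it is a geodesic near $y_0$.

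The second step uses optimality. Since $y_0 \in \Gamma$ is an interior minimizer of $y \mapsto d_h(x,y) = d(x,y) - h(y)$ and $h \in C^1(\bar\Gamma)$, the inequality $d(x, c(\tau)) - h(c(\tau)) \ge d(x, y_0) - h(y_0)$ for small $\tau$ gives, after dividing by $\tau > 0$ and letting $\tau \to 0^+$,
\begin{align*}
\liminf_{\tau \to 0^+} \frac{d(x, c(\tau)) - d(x, y_0)}{\tau} \ge (\grad_{\p M} h(y_0), u)_g.
\end{align*}
Combining the two displays yields $(\grad_{\p M} h(y_0), u)_g \le -(\xi, u)_g$ for every tangential $u$; applying this to both $u$ and $-u$ forces equality, so the tangential projection of $\xi$ onto $T_{y_0}\p M$ equals $-\grad_{\p M} h(y_0)$.

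The final step is where the hypothesis (\ref{grad_h_small}) enters. Since $|\xi|_g = 1$ while the tangential part of $\xi$ has norm $|\grad_{\p M} h(y_0)|_g < 1$, the normal component $(\xi, \nu)_g$ cannot vanish; as $x \in M^\inter$ the path points inward, so $(\xi, \nu)_g = (1 - |\grad_{\p M} h(y_0)|_g^2)^{1/2} > 0$, and therefore $\xi = V(h)$. In particular $\gamma$ meets $\p M$ transversally at $y_0$, whence $\gamma((0,\rho)) \subset M^\inter$ for some $\rho > 0$; on this interval the length minimizer is an interior geodesic, and its initial velocity is $V(h)$, which is exactly the assertion.

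I expect the main obstacle to be the rigorous justification of the first variation inequality in the first step, precisely because $\gamma$ need not be a geodesic near $y_0$ a priori — this is the non-transversality phenomenon illustrated in Figure 1. The boundary-hugging alternative is ruled out only a posteriori: if $\xi$ were tangent to $\p M$ its tangential part would have unit norm, contradicting the bound $|\grad_{\p M} h|_g < 1$ obtained in the second step. Care is also needed to control the curvature error terms in the normal-coordinate estimate of $d(\gamma(\epsilon), c(\tau))$ uniformly, by coupling the choice of the auxiliary parameter $\epsilon$ to $\tau$ as $\tau \to 0^+$.
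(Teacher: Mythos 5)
Your argument is correct, but it takes a genuinely different route from the paper's, essentially reversing the order of the two conclusions. The paper proves interior-ness first: arguing by contradiction, it assumes $\gamma(s_j)\in\p M$ for a sequence $s_j\downarrow 0$, works in boundary normal coordinates, and combines (\ref{grad_h_small}) with the unit-speed bound on the tangential part of $\dot\gamma$ to get $|h(\gamma(s_j))-h(y_0)|<s_j$; hence $d_h(x,\gamma(s_j))<d_h(x,y_0)=d_h(x,\Gamma)$, contradicting the minimality of $y_0$. Only then, with $\gamma$ known to be a smooth geodesic near $y_0$, does it identify $\dot\gamma(0)=V(h)$ by a standard (cited) first-variation argument. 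You instead derive the first-order optimality condition directly at $y_0$ — the tangential part of $\dot\gamma(0)$ equals $-\grad_{\p M}h(y_0)$ — and then obtain $\dot\gamma(0)=V(h)$, transversality, and hence interior-ness from $|\grad_{\p M}h(y_0)|_g<1$ and $|\dot\gamma(0)|_g=1$. Both proofs spend the hypothesis (\ref{grad_h_small}) on excluding a tangential start, but they distribute the technical weight differently: the paper's ordering keeps the variation argument routine, because it is applied to a curve already smooth up to the endpoint, at the cost of a separate contradiction argument; your ordering is more unified, but it needs the first-variation inequality for a minimizer that is a priori only $C^1$ and may touch $\p M$ — exactly the step you flag as the main obstacle. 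That step does go through: in boundary normal coordinates the coordinate image of $M$ near $y_0$ is convex, so straight segments give the upper bound $d(\gamma(\epsilon),c(\tau))\le(1+C(\epsilon+\tau))(|\epsilon\xi-\tau u|_{g(y_0)}+\epsilon\,\omega(\epsilon)+o(\tau))$, where $\omega$ is the modulus of continuity of $\dot\gamma$ at $0$, and the comparison closes provided you couple $\epsilon=\epsilon(\tau)$ so that $\tau/\epsilon\to 0$, $\epsilon^2/\tau\to 0$ and $\epsilon\,\omega(\epsilon)/\tau\to 0$ simultaneously (for instance $\epsilon=\tau/\max(\tau^{1/4},\omega(\sqrt{\tau})^{1/2})$ works); with merely one-sided $C^1$ regularity this coupling is the one delicate point, and once it is in place your a posteriori exclusion of the boundary-hugging alternative is sound.
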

By \cite{Alexander1981} every shortest path is $C^1$, whence we lose no generality with the assumption that $\gamma$ has unit speed. 
\begin{proof}
Let us denote $t := d(x, y_0)$.
We prove the existence of $\rho$ by a contradiction, so suppose that there is a strictly 
decreasing sequence $(s_j)_{j=1}^\infty$ in $(0, t)$ converging to zero
such that $\gamma(s_j) \in \p M$.
Let us consider boundary normal coordinates $(r, z) \in [0, \infty) \times \p M$ 
in a neighborhood of $y_0$.
In these coordinates the metric tensor has the form
\begin{align}
\label{semi_geodesic_coordinates}
g(r, z) 
= dr^2 + g^0(r, z) dz^2 = dr^2 + \sum_{j,k = 1}^{n-1} g^0_{jk}(r, z) dz^j dz^k.
\end{align}
We denote the boundary normal coordinates of $\gamma(s)$ by $(r(s), z(s))$.

Notice that for all $\epsilon > 0$ there is $\delta > 0$ such that for all $s \in [0, \delta]$
\begin{align*}
|\dot z(s)|_{g^0(0, z(s))} \le |\dot z(s)|_{g^0(\gamma(s))} + \epsilon
\le |\dot \gamma(s)|_{g(\gamma(s))} + \epsilon
= 1 + \epsilon.
\end{align*}
Indeed, the first inequality follows from $r(0) = 0$ and smoothness of $g^0$,
and the second one from (\ref{semi_geodesic_coordinates}).
We denote $y_j := \gamma(s_j)$ and $h' := \grad_{\p M} h$. Then for small $\epsilon > 0$, large $j$ and $k > j$,
\begin{align*}
|h(y_j) - h(y_k)| 
&=
|\int_{s_k}^{s_j} \p_s h(z(s)) ds|
\le \norm{h'}_{C(\bar \Gamma)} \int_{s_k}^{s_j} |\dot z(s)|_{g^0(0, z(s))} ds
%\\&
%\le \norm{h'}_{C(\bar \Gamma)} \int_{s_k}^{s_j} 
%\ll(|\dot \gamma(s)|_{g(\gamma(s))} + \epsilon \rr) ds 
\\&
= \norm{h'}_{C(\bar \Gamma)} (1 + \epsilon) (s_j - s_k)
< s_j - s_k.
\end{align*}
By taking the limit $k \to \infty$ we have $|h(y_j) - h(y_0)| < s_j$. 
Hence for large $j$
\begin{align*}
d_h(x, y_j) &\le l(\gamma|[s_j, t]) - h(y_j) < t - s_j -h(y_0) + s_j  
\\&= 
d_h(x, y_0) = d_h(x, \Gamma),
\end{align*}
which is a contradiction as $y_j \in \Gamma$ for large $j$.
We have shown that there is $\rho > 0$ such that $\gamma((0, \rho)) \subset M^\inter$.
In particular $\gamma|[0, \rho]$ coincides with a geodesic.
The fact $\dot \gamma(0) = V(h)$ follows from a variation argument, 
see e.g. \cite[p. 99]{Lee1997} for a similar proof.
%
%{\footnotesize
%Indeed, let $\beta : (-\epsilon, \epsilon) \times [0, t] \to M$ be a $C^1$ family of curves such that
%$s \mapsto \beta(r, s)$ is a path from $y \in \Gamma$ to $x$
%and $\beta(0, s) = \gamma(s)$.
%Then
%\begin{align*}
%l(\sigma(0, \cdot)) - h(\sigma(0, 0)) = d_h(x, y_0) = d_h(x, \Gamma) \le l(\sigma(r, \cdot)) - h(\sigma(r, 0)).
%\end{align*}
%We denote $W(s) := \p_r \sigma(0, s)$. Then, see e.g. 
%\begin{align*}
%0 
%&= 
%\p_r \ll( l(\sigma(r, \cdot)) - h(\sigma(r, 0)) \rr)|_{r = 0} 
%%\\&= 
%%\int_0^t \p_r \sqrt{ (\dot \sigma(r, s), \dot \sigma(r, s))_g} |_{r=0} ds 
%%	- (h', (\p_r \sigma)(0, 0))_g 
%\\&= 
%- (W(0), \dot \gamma(0))_g - (W(0), h')_g.
%\end{align*}
%As $W(0) \in T_{y_0} \p M$ can be chosen arbitrarily, we have
%\begin{align*}
%\dot \gamma(0)|_{T^* \p M} = - \grad_{\p M} h.
%\end{align*}
%The claim follows as $\dot \gamma(0) \in S M$.
%}
\end{proof}

\begin{lemma}
\label{lem_distance_test}
Let us suppose one of the following 
\begin{itemize}
\item[(a)] $\Gamma \subset \p M$ is open, $y \in \Gamma$ and $h \in C(\bar \Gamma)$ satisfies $h(y) = 0$.
\item[(b)] $\Gamma \subset M^\inter$ is open and has smooth boundary, $y \in \p \Gamma$ and $h = 0$ identically.
\end{itemize}
Let $t > 0$.
Then the following properties are equivalent
\begin{itemize}
\item[(i)] There is $x \in M$ such that $d(x, y) = d_h(x, \Gamma) = t$.
\item[(ii)] For all $s < t$, $M(y, t) \not\subset M(\Gamma, s + h)$.
\end{itemize}
Moreover, if $h \in C^1(\bar \Gamma)$ satisfies (\ref{grad_h_small}) and
$t \le \tau_M(y, V(h))$,
then $x$ in (i) is unique and $x = \gamma(t; y, V(h))$.
\end{lemma}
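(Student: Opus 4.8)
\emph{Plan of proof.} The plan is to reduce both (i) and (ii) to a single statement about the quantity
\[
F(t) := \max_{x \in M(y,t)} d_h(x, \Gamma),
\]
which is well defined since $M(y,t)$ is a closed geodesic ball, hence compact, and $x \mapsto d_h(x,\Gamma)$ is continuous. First I would unwind the definitions: a point $x$ lies in $M(\Gamma, s+h)$ exactly when $d_h(x,\Gamma) \le s$, so $M(y,t) \subset M(\Gamma, s+h)$ holds if and only if $F(t) \le s$. Hence (ii), the assertion that $M(y,t) \not\subset M(\Gamma, s+h)$ for every $s < t$, is equivalent to $F(t) \ge t$. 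The one elementary input I need is $d_h(x,\Gamma) \le d_h(x,y) = d(x,y)$, valid for every $x$ because $h(y)=0$ in case (a) and $y \in \bar\Gamma$, $h\equiv 0$ in case (b); on $M(y,t)$ this gives $F(t) \le t$ for free.

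With these two observations the equivalence is immediate. If (i) holds then its witness $x$ lies in $M(y,t)$ and satisfies $d_h(x,\Gamma)=t$, so $F(t) \ge t$ and (ii) follows. Conversely, if $F(t)\ge t$ I would pick an $x$ attaining the maximum and note that $t \le d_h(x,\Gamma) \le d(x,y) \le t$, which forces $d(x,y)=d_h(x,\Gamma)=t$, that is, (i).

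For the final assertion, suppose (i) holds with witness $x$. Since $d_h(x,y)=d(x,y)-h(y)=t=d_h(x,\Gamma)$, the point $y$ minimises $d_h(x,\cdot)$ over $\bar\Gamma$. Let $\gamma$ be a unit-speed shortest path from $y$ to $x$; a triangle-inequality estimate shows $d_h(\gamma(s),\Gamma)=s=d(\gamma(s),y)$ for all $s\in[0,t]$, so $y$ stays the minimiser along the whole path. Lemma \ref{lem_orthogonality} (and its interior analogue in case (b)) then shows that $\gamma$ enters $M^\inter$ and leaves $y$ with initial velocity $V(h)$. As a shortest path is a geodesic wherever it lies in $M^\inter$, and the hypotheses $t\le\tau_M(y,V(h))$ and (\ref{grad_h_small}) keep $s\mapsto\gamma(s;y,V(h))$ in $M^\inter$ for $s<\tau_M$, a continuation argument identifies $\gamma(s)=\gamma(s;y,V(h))$ throughout $[0,t]$, whence $x=\gamma(t;y,V(h))$; uniqueness is then immediate.

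The hard part will be this last step, upgrading the local conclusion of Lemma \ref{lem_orthogonality} to the global identity $x=\gamma(t;y,V(h))$. I would need to exclude the shortest path touching $\p M$ at an intermediate time, which would spoil its description by one geodesic, and to accommodate the case $x\in\p M$, which can only occur when $t=\tau_M(y,V(h))$. The exit-time bound $t\le\tau_M(y,V(h))$ is precisely what keeps the interior geodesic from meeting $\p M$ prematurely and makes the continuation argument close.
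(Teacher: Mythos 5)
Your proof is correct and follows essentially the same route as the paper: the paper's proof of (ii)$\Rightarrow$(i) extracts a convergent sequence $x_j \in M(y,t)\setminus M(\Gamma, s_j+h)$ with $s_j \to t$ and derives the same chain $t \le d_h(x,\Gamma) \le d(x,y) \le t$ that you obtain from a maximizer of $d_h(\cdot,\Gamma)$ on the compact ball $M(y,t)$, so your function $F$ is just a repackaging of the same compactness argument. The uniqueness step is also identical in substance: both arguments invoke Lemma \ref{lem_orthogonality} (with the analogous variation argument in case (b)) to identify the shortest path with $\gamma(\cdot;y,V(h))$ away from $\p M$, and both use the bound $t \le \tau_M(y,V(h))$ to rule out premature boundary contact.
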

\begin{proof}
It is clear that (i) implies (ii).
Let us show (i) assuming (ii).
We choose a sequence $(x_j, s_j)_{j=1}^\infty$ in $M(y, t) \times (0,t)$ such that 
$x_j \notin M(\Gamma, s_j + h)$ and $s_j \to t$.
By considering a subsequence we may assume that $x_j \to x$.
As $M(y, t)$ is closed we have $x \in M(y, t)$, whence 
\begin{align*}
d_h(x, y) = d(x, y) \le t.
\end{align*}
Moreover, $x_j \notin M(\Gamma, s_j + h)$ implies
\begin{align*}
d_h(x, y) \ge d_h(x, \Gamma) = \lim_{j \to \infty} d_h(x_j, \Gamma) \ge \lim_{j \to \infty} s_j = t.
\end{align*}
We have shown (i).

Let us proceed to show the uniqueness. Let $\gamma$ be a unit speed shortest path from $y$ to $x$.
Then $\gamma$ coincides with $\gamma(\cdot; y, V(h))$
as long as it does not intersect $\p M$.
Indeed, this follows from Lemma \ref{lem_orthogonality} in the case (a) and 
from an analogous variation argument in the case (b). 
Finally, $d(x, y) = t \le \tau_M(y, V(h))$ implies that 
$x = \gamma(t; y, V(h))$. 
\end{proof}

\begin{lemma}
\label{lem_sigma_test}
Let us suppose one of the following 
\begin{itemize}
\item[(a)] $\Gamma \subset \p M$ is open, $y \in \Gamma$ and $h \in C^1(\bar \Gamma)$ satisfies $h(y) = 0$
and (\ref{grad_h_small}).
\item[(b)] $\Gamma \subset M^\inter$ is open and has smooth boundary, $y \in \p \Gamma$ and $h = 0$ identically.
\end{itemize}
Let $r > 0$.
Then (i) implies (ii) where 
\begin{itemize}
\item[(i)] $r \le \sigma_\Gamma(y; h)$
\item[(ii)] For all $t \in (0, r]$ and $s \in (0, t)$, $M(y, t) \not\subset M(\Gamma, s + h)$.
\end{itemize}
Moreover, if $\gamma(\cdot; y, V(h))$ is transversal to $\p M$ then (ii) implies (i).
\end{lemma}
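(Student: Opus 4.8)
The plan is to reduce the quantified condition (ii) to the pointwise statement already analyzed in Lemma \ref{lem_distance_test} and then to match it against the definition of $\sigma_\Gamma(y;h)$. For a fixed $t$, condition (ii) of the present lemma is exactly condition (ii) of Lemma \ref{lem_distance_test}, whose hypotheses (a)/(b) are implied by ours. Hence (ii) holds here if and only if for every $t \in (0,r]$ there is a point $x_t \in M$ with $d(x_t,y) = d_h(x_t,\Gamma) = t$, and the whole proof turns on comparing the existence of such minimizers with the geodesic $\gamma(\cdot) := \gamma(\cdot; y, V(h))$. The key technical fact I would isolate first is the \emph{interval property}: writing $f(s) := d_h(\gamma(s),\Gamma)$, one has $f(s) = s$ for every $s \in (0, \sigma_\Gamma(y;h)]$.

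To prove the interval property, note that $f(s) \le d_h(\gamma(s),y) = d(\gamma(s),y) \le s$ always, using $h(y) = 0$ and that $\gamma|[0,s]$ has length $s$. If $f(s_0) = s_0$ for some $s_0$, this forces $d(\gamma(s_0),y) = s_0$, so $\gamma|[0,s_0]$ is a shortest path and $d(\gamma(s),y) = s$ for $s \le s_0$; moreover $d(\gamma(s_0),y') \ge s_0 + h(y')$ for all $y' \in \Gamma$. For $s < s_0$ the triangle inequality $d(\gamma(s),y') \ge d(\gamma(s_0),y') - (s_0-s)$ then gives $d_h(\gamma(s),y') \ge s$, hence $f(s) \ge s$ and so $f(s) = s$. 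Thus $\{s : f(s) = s\}$ is an interval, and by continuity of $f$ together with the definition of $\sigma_\Gamma(y;h)$ as the supremum of this set, $f$ is the identity on $(0,\sigma_\Gamma(y;h)]$.

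With this in hand, (i) $\Rightarrow$ (ii) is immediate and needs no transversality: if $r \le \sigma_\Gamma(y;h)$ then for each $t \in (0,r]$ the point $x_t := \gamma(t)$ satisfies $d(x_t,y) = d_h(x_t,\Gamma) = t$, which is condition (i) of Lemma \ref{lem_distance_test} for that $t$; the equivalence in that lemma yields $M(y,t) \not\subset M(\Gamma, s+h)$ for all $s < t$. Here I use $M(\Gamma, s+h) = \{x : d_h(x,\Gamma) \le s\}$, which follows directly from the definitions, since $d_{s+h}(x,\Gamma) = d_h(x,\Gamma) - s$.

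For the converse I would assume (ii) and derive $r \le \sigma_\Gamma(y;h)$. By the reduction above, for every $t \in (0,r]$ there is $x_t$ with $d(x_t,y) = d_h(x_t,\Gamma) = t$. The uniqueness clause of Lemma \ref{lem_distance_test} identifies $x_t = \gamma(t)$ as soon as $t \le \tau_M(y,V(h))$, and then $d_h(\gamma(t),\Gamma) = t$ places $t$ in the set defining $\sigma_\Gamma(y;h)$, so $t \le \sigma_\Gamma(y;h)$. The only remaining point, and the main obstacle, is to exclude $r > \tau_M(y,V(h))$: if a minimizer $x_t$ existed for some $t > \tau_M(y,V(h))$, Lemma \ref{lem_orthogonality} shows any shortest path from $y$ to $x_t$ leaves $y$ with velocity $V(h)$ and hence coincides with $\gamma$ while it stays in $M^\inter$, yet $\gamma$ reaches $\p M$ at time $\tau_M(y,V(h)) < t$. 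This is exactly where transversality of $\gamma(\cdot; y, V(h))$ to $\p M$ is essential: a transversal hit means $\gamma(s) \notin M$ for $s$ slightly larger than $\tau_M(y,V(h))$, so no shortest path in $M$ starting in the direction $V(h)$ can have length exceeding $\tau_M(y,V(h))$, a contradiction. Without transversality the geodesic may graze $\p M$ tangentially and re-enter, so the exit time no longer controls the length of the minimizer; this is precisely the difficulty that the modified distance functions of Section \ref{sec_cut_locus} are designed to circumvent, and I expect the careful treatment of this exit-time alternative to be the delicate part of the argument.
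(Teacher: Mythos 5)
Your overall strategy coincides with the paper's: reduce (ii) to the pointwise statement of Lemma \ref{lem_distance_test}, prove (i) $\Rightarrow$ (ii) by taking $x_t = \gamma(t; y, V(h))$, and for the converse first show that (ii) plus transversality forces $r \le \tau_M(y, V(h))$, then use the uniqueness clause of Lemma \ref{lem_distance_test} to place every $t \in (0,r]$ in the set defining $\sigma_\Gamma(y;h)$. Your ``interval property'' ($d_h(\gamma(s),\Gamma) = s$ for all $s \le \sigma_\Gamma(y;h)$, proved via the triangle inequality) is a genuinely useful addition: the paper uses this fact silently when it asserts that $\gamma(t;y,V(h))$ satisfies (i) of Lemma \ref{lem_distance_test} for \emph{all} $t \in (0,r]$, even though $\sigma_\Gamma(y;h)$ is defined only as a supremum.

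There is, however, a gap in your key contradiction step. You argue: the shortest path from $y$ to $x_t$ coincides with $\gamma(\cdot; y, V(h))$ while it stays in $M^\inter$, a transversal hit means $\gamma(s) \notin M$ for $s$ slightly larger than $\tau := \tau_M(y,V(h))$, ``so no shortest path in $M$ starting in the direction $V(h)$ can have length exceeding $\tau$.'' That last inference does not follow from what precedes it. In a manifold with boundary a shortest path need not be a geodesic: it can bend at the boundary and run along it. So the fact that the \emph{geodesic} exits $M$ at time $\tau$ does not by itself prevent a shortest path of length $t > \tau$ from agreeing with $\gamma$ on $[0,\tau]$ and then turning at $\gamma(\tau)$ to continue inside $M$; ODE uniqueness (which underlies your ``coincides while in $M^\inter$'') says nothing at the boundary point. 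What excludes this is the $C^1$-regularity of shortest paths in manifolds with boundary (Alexander--Alexander, \cite{Alexander1981}), which is exactly how the paper closes the argument: since the shortest path is $C^1$, its velocity at $\tau$ is $\dot\gamma(\tau)$, which by transversality points strictly out of $M$, so any continuation would have to leave $M$ --- equivalently, a turn at $\gamma(\tau)$ would be a corner, contradicting $C^1$ regularity. Adding this one invocation makes your proof complete and essentially identical to the paper's.
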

\begin{proof}
If (i) holds then
$x = \gamma(t; y, V(h))$ satisfies (i) of Lemma \ref{lem_distance_test} for all $t \in (0, r]$,
whence (ii) holds.

Let us now suppose that $\gamma(\cdot; y, V(h))$ is transversal to $\p M$.
We will first prove that $r > \tau_M(y, V(h))$ together with (ii) yield a contradiction.
Let us denote $\tau := \tau_M(y, V(h))$ and
let $t \in (\tau, r)$. By Lemma \ref{lem_distance_test} there is 
$x \in M$ satisfying 
$d(x, y) = d_h(x, \Gamma) = t$.
Thus any shortest path $\gamma$ from $y$ to $x$ coincides 
with $\gamma(\cdot; y, V(h))$ on the interval $[0, \tau]$.
By transversality $\dot \gamma(\tau) \notin T \p M$,
whence $\gamma$ is not $C^1$ at $\tau \in (0, t)$.
This is a contradiction, since $\gamma : [0, t] \to M$ is a shortest path,
whence it is $C^1$, see \cite{Alexander1981}.
We have shown that (ii) implies $r \le \tau_M(y, V(h))$.

Now we see that (ii) implies also (i) 
by applying Lemma \ref{lem_distance_test} for all $t \in (0, r]$.
\end{proof}

\begin{proof}[Proof of Lemma \ref{lem_distance}]
We denote $x_0 := \gamma(s; y_0, \nu)$.
By Lemmas \ref{lem_sigma_test} and \ref{lem_distance_test} 
the point $x_0$ is the only point $x \in M$ satisfying
$d(x, y_0) = d(x, \Gamma) = s$. In particular,
$x_0 \notin M(\Gamma, s - \delta)$ for $\delta > 0$.

If (ii) holds, then $x_0 \in M(y_1, t + \epsilon)$ for all $\epsilon > 0$.
Hence $d(x_0, y_1) \le t + \epsilon$ for all $\epsilon > 0$, and we have (i).

Let us now assume (i) and let $\epsilon > 0$ and $x \in M(y_0, s)$.
If there does not exist $\delta > 0$ such that $x \in M(\Gamma, s - \delta)$, 
then $d(x, \Gamma) > s - \delta$ for all $\delta > 0$.
Thus 
\begin{align*}
s \ge d(x, y_0) \ge d(x, \Gamma) \ge s,
\end{align*}
and $x = x_0 \in M(y_1, t + \epsilon)$.
We have shown (ii).
\end{proof}

\subsection{From distance funtions to local reconstructions of the manifold}
\label{sec_reconstruction_g}

By Lemma \ref{lem_distance} the distances,
\begin{align}
\label{evaluation_dist}
d(\gamma(s; y, \nu), z), 
\quad (s, y) \in \NN_\Rec,\ z \in \Rec,
\end{align}
are determined by $\sigma_\Rec$ and the relation (\ref{boundary_distance_relation_Rec}).
The considerations in \cite[Section 4.4.6]{Katchalov2001} imply that the distances (\ref{evaluation_dist})
determine $(M_\Rec, g)$ in the boundary normal coordinates (\ref{boundary_normal_coords_s_y}).
The reconstruction of $(M_B, g)$ from the relation (\ref{boundary_distance_relation_B}) 
is rather similar. However, 
we will describe it here for the sake of completeness. 

Let $x \in M$ and $\rho > 0$ and denote 
$B := M(x, \rho)$.
Let us suppose that $\rho$ is small enough so that $B$ is contained in a normal neighborhood of $x$ in $M^\inter$
and consider the interior data $\Lambda_{\Src, B}$.

We let $r \in (0, \rho)$ and denote
\begin{align*}
Y_r(\xi) := \gamma(r; x, \xi), \quad Y_r : S_{x} M \to \p M(x, r).
\end{align*}
Then $Y_r$ is a diffeomorphism and
$\gamma(s; Y_r(\xi), \nu) = \gamma(s + r; x, \xi)$.
Notice also that 
\begin{align}
\label{eq_sigma_Bs}
r + \min_{y \in \p M(x, r)} \sigma_{M(x, r)}(y) = \sigma^B.
\end{align}
%since a shortest path from $x$ to $z \notin M(x, r)$ must intersect $\p M(x, r)$.
%\begin{proof}
%Let $z \notin B(r)$. Then a shortest path from $x$ to $z$ must intersect $\p B(r)$.
%We have
%\begin{align*}
%d(x, z) = r + \min_{y \in \p B(r)} d(y, z) = r + d(z, B(r)),
%\end{align*}
%since $d(x, y) = r$ for any $y \in \p B(r)$. Thus 
%\begin{align*}
%d(\gamma(s + r; x, \xi), x) = s + r
%\quad \text{if and only if} \quad
%d(\gamma(s; Y_r(\xi), \nu), B(r)) = s.
%\end{align*}
%\end{proof}
Lemma \ref{lem_distance} implies that $Y_r$, $\sigma^B$ and the relation 
(\ref{boundary_distance_relation_B}) for $B = M(x,r)$
determine the distances,
\begin{align*}
d(\gamma(s + r; x, \xi_0), \gamma(r; x, \xi_1)), 
\quad \xi_0, \xi_1 \in S_{x} M,\ 0 < s < \sigma^B - r.
\end{align*}

Let us denote by $dist$ the distance function $d$
in the geodesic normal coordinates (\ref{normal_coords_s_xi}).
We have shown that $(B, g)$, $\sigma^B$ and $\Lambda_{\Src, B}$
determine the distances,
\begin{align}
\label{evaluation_dist_B}
dist((s, \eta), (r, \xi)), 
\quad (s, \eta) \in \NN_B,\ (r, \xi) \in (0, \rho) \times S_x M.
\end{align}

\begin{lemma}
\label{lem_diff_of_dist}
Let $(s_0, \eta_0) \in \NN_B$ and let us consider the differentiated distance function 
\begin{align*}
\Phi(r, \xi) := d_{(s, \eta)} dist((s, \eta), (r, \xi))|_{(s, \eta) = (s_0, \eta_0)},
\quad (r, \xi) \in (0, \rho) \times S_x M.
\end{align*}
Then there is $r_0 > 0$ such that 
the image of $(0, r) \times S_x M$ under $\Phi$ is open in $S_{(s_0, \eta_0)}^* \NN_B$
for all $0 < r < r_0$.
\end{lemma}
\begin{proof}
For $y \in M$, $\eta \in S_y M$ and $t > 0$
we denote $\exp_y(t\eta) := \gamma(t; y, \eta)$.
As $(s_0, \eta_0)$ is in a normal coordinate neighborhood of $x = 0$
the same is true for $(r, \xi)$ with $r$ small enough. Thus 
\begin{align*}
\Phi^\sharp(r, \xi) := \grad_{(s, \eta)} dist((s, \eta), (r, \xi))|_{(s, \eta) = (s_0, \eta_0)} 
= -P \exp_{(s_0, \eta_0)}^{-1}(r, \xi),
\end{align*}
where $P$ is the projection,
\begin{align*}
P v := \frac{v}{|v|_g}, \quad P : T_{(s_0, \eta_0)} \NN_B \to S_{(s_0, \eta_0)} \NN_B.
\end{align*}
%
%{\footnotesize
%\begin{lemma}
%Let $y \in M$.
%Then in the normal coordinates $(r, \eta)$ around $y = 0$
%\begin{align*}
%&d(x, y) = d((r, \eta), 0) = r,
%\\
%&\grad_x d(x, y) = \p_r|_x,
%\\
%&\exp_x^{-1}(y) = -r \p_r|_x,
%\\
%&P \exp_x^{-1}(y) = -\p_r|_x.
%\end{align*}
%\end{lemma}
%\begin{proof}
%Notice that $x = \gamma(r; y, \eta)$ and 
%$\xi := \dot \gamma(r; y, \eta) = \p_r|_x$ as it is a radial geodesic.
%Moreover, $y = \gamma(r; x, -\xi)$ and $\exp_x^{-1}(y) = -r\xi = -r \p_r|_x$.
%Recall that the metric tensor $g$ is of the form $dr^2 + g_0(r, \eta) d\eta^2$ 
%in the normal coordinates. Thus $|\p_r|_g = 1$ and 
%$P \exp_x^{-1}(y) = - \p_r|_x$. 
%
%We denote $f(x) = d(x, y)$,
%$\p_1 = \p_r$ and $\p_j = \p_{\eta^{j-1}}$ for $j=2, \dots, n$. Then
%\begin{align*}
%\grad_x d(x, y) = \sum_{j,k=1}^n g^{jk} \p_j f \p_k = \sum_{k=1}^n g^{1k} \p_1 f \p_k
%= \p_1 f \p_1 = \p_r,
%\end{align*}
%since $\p_j f = 0$ for $j \ge 2$ and $g^{1k} = 0$ for $k \ne 1$.
%\end{proof}
%}
%
As $\exp_{(s_0, \eta_0)}^{-1}$ is a local diffeomorphism around $0$ and $P$ is an open map,
we see that $\Phi^\sharp((0, r) \times S_x M)$ is open in $S_{(s_0, \eta_0)} \NN_B$ for small enough $r$.
The claim follows by using the isomorphism 
$S_{(s_0, \eta_0)} \NN_B \to S_{(s_0, \eta_0)}^* \NN_B$ induced by the metric $g$.
\end{proof}

Lemma \ref{lem_diff_of_dist} implies that 
the second order homogeneous polynomial 
$g(s_0, \eta_0)$ is determined on the space $T_{(s_0, \eta_0)}^* \NN_B = \R^n$ 
by the distances (\ref{evaluation_dist_B}). Thus $g$ is determined also on $T \NN_B$.
To summarize, when we are given $\Lambda_{\Src,B}$, $\sigma^B$ and $(B,g)$
we can determine 
$(M_B, g)$ in the geodesic normal coordinates (\ref{normal_coords_s_xi}).

\subsection{Reconstruction of distances to cut points using modified distance functions}
\label{sec_cut_locus}

In this section we show that for open $\Gamma \subset \p M$
the distance to a cut point $\sigma_\Gamma$ is determined by the relation
\begin{align*}
\{ (y, t, h) \in \Gamma \times (0, \infty) \times C^1(\bar \Gamma);\ M(y, t) \subset M(\Gamma, h) \}.
\end{align*}
Moreover, we show that for a small ball $B \subset M^\inter$ 
the cut time $\sigma^B$ is determined by the relation
\begin{align*}
\{ (y, t, s) \in \Gamma \times (0, \infty)^2 ;\ M(y, t) \subset M(B, s) \}.
\end{align*}

\begin{lemma}
\label{lem_Klingenberg_cont}
Let $\Gamma \subset \p M$ be open and let $y_0 \in \Gamma$. Then the map 
\begin{align*}
\sigma_\Gamma : \Gamma \times C^1(\bar \Gamma) \to \R
\end{align*}
is lower semicontinuous at $(y_0; 0)$.
Moreover, if $\sigma_\Gamma(y_0; 0) < \tau_M(y_0, \nu)$ then 
$\sigma_\Gamma$ is continuous at $(y_0; 0)$.
\end{lemma}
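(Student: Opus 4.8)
The lemma concerns the cut-distance function $\sigma_\Gamma(y;h)$, which measures how far the geodesic $\gamma(\cdot; y, V(h))$ (leaving $\Gamma$ in the modified normal direction $V(h)$) travels before the equation $d_h(\gamma(s),\Gamma)=s$ first fails. We want lower semicontinuity at $(y_0;0)$, and upgrade to continuity when $\sigma_\Gamma(y_0;0)<\tau_M(y_0,\nu)$, i.e. when the cut point is genuinely interior and not caused by the geodesic hitting the boundary.

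**Plan for lower semicontinuity.** The plan is to argue by contradiction. Suppose $\sigma_\Gamma$ fails to be lower semicontinuous at $(y_0;0)$; then there is $c<\sigma_\Gamma(y_0;0)$ and a sequence $(y_k;h_k)\to(y_0;0)$ with $\sigma_\Gamma(y_k;h_k)\le c$ for all $k$. The key object is the point reached at distance $c$ along each perturbed geodesic, $x_k:=\gamma(c;y_k,V(h_k))$. First I would record that $V(h_k)\to V(0)=\nu$ and $y_k\to y_0$, so by smooth dependence of geodesics on initial data $x_k\to x_0:=\gamma(c;y_0,\nu)$. The inequality $\sigma_\Gamma(y_k;h_k)\le c$ means that the defining equality $d_{h_k}(\gamma(s;y_k,V(h_k)),\Gamma)=s$ must have failed at or before $s=c$; concretely there are competing base points $z_k\in\bar\Gamma$ with $d_{h_k}(x_k,z_k)<c$, i.e. $d(x_k,z_k)-h_k(z_k)$ strictly smaller than the value $c-h_k(y_k)$ achieved at $z_k=y_k$. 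Passing to a subsequence with $z_k\to z_0\in\bar\Gamma$ and using $h_k\to 0$ in $C^1$ (hence uniformly), I would take limits to obtain $d(x_0,z_0)\le c$ with $z_0\neq y_0$ or a strict-inequality defect, contradicting that $c<\sigma_\Gamma(y_0;0)$ forces $\gamma(\cdot;y_0,\nu)|_{[0,c]}$ to be the unique $d(\cdot,\Gamma)$-minimizing path with $d(x_0,\Gamma)=d(x_0,y_0)=c$. This uniqueness is exactly the content extracted from Lemmas \ref{lem_sigma_test} and \ref{lem_distance_test} when $c$ is strictly below the cut distance, so I would cite those rather than reprove the minimizer structure.

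**Plan for continuity under the transversality/interior hypothesis.** Lower semicontinuity is already in hand, so continuity reduces to upper semicontinuity at $(y_0;0)$ under the extra assumption $\sigma_\Gamma(y_0;0)<\tau_M(y_0,\nu)$. The plan is again by contradiction: suppose there are $(y_k;h_k)\to(y_0;0)$ with $\liminf\sigma_\Gamma(y_k;h_k)\ge\sigma_\Gamma(y_0;0)+2\epsilon$ for some $\epsilon>0$, and set $\sigma_0:=\sigma_\Gamma(y_0;0)$. Because $\sigma_0<\tau_M(y_0,\nu)$, the geodesic $\gamma(\cdot;y_0,\nu)$ stays in $M^\inter$ past $\sigma_0$, so $\gamma(\cdot;y_k,V(h_k))$ stays interior past $\sigma_0+\epsilon$ for large $k$ by continuous dependence; this is where the interior hypothesis is used, and it guarantees the competing base point giving the defect at $y_0$ is an honest interior cut competitor rather than a boundary artifact. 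At $s=\sigma_0$ the equality $d(\gamma(\sigma_0;y_0,\nu),\Gamma)=\sigma_0$ is achieved by at least two distinct minimizers, or by a minimizer meeting $\Gamma$ non-normally / a conjugate point, so for $s$ slightly above $\sigma_0$ there is a strictly shorter path, i.e. $d(\gamma(\sigma_0+\epsilon;y_0,\nu),\Gamma)<\sigma_0+\epsilon$ with a definite gap $\delta>0$. I would then transport this strict gap to the perturbed geodesics: evaluate the point $\gamma(\sigma_0+\epsilon;y_k,V(h_k))\to\gamma(\sigma_0+\epsilon;y_0,\nu)$ and use uniform convergence $h_k\to 0$ together with continuity of $d$ to conclude $d_{h_k}(\gamma(\sigma_0+\epsilon;y_k,V(h_k)),\Gamma)<\sigma_0+\epsilon$ for large $k$, whence $\sigma_\Gamma(y_k;h_k)\le\sigma_0+\epsilon$. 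This contradicts $\liminf\sigma_\Gamma(y_k;h_k)\ge\sigma_0+2\epsilon$ and yields upper semicontinuity.

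**Main obstacle.** The delicate point is ensuring the strict defect at $s=\sigma_0+\epsilon$ is uniform enough to survive the double limit in $k$ and the $C^0$-perturbation of $h$. The defect arises from one of two classical cut-locus mechanisms—two distinct minimizers, or a first conjugate point—and in either case a standard first-variation argument shows that slightly beyond $\sigma_0$ the distance $d(\gamma(s;y_0,\nu),\Gamma)$ drops strictly below $s$ by a quantity bounded away from zero on a compact $s$-interval. I would isolate this as the crux: once the strict, quantitative gap $\delta$ at $y_0$ is established (invoking that $h\equiv0$ there and $\grad_{\p M}h(y_0)\to0$ so $V(h_k)\to\nu$), continuity of $d$, smooth geodesic dependence, and uniform smallness of $h_k$ close the argument routinely. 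The transversality/interior hypothesis is precisely what prevents the competing boundary-exit mechanism, which has no such first-variation control and would obstruct upper semicontinuity (indeed it is the non-transversal tangential intersection depicted in Figure 1 that the modified distance functions are designed to circumvent).
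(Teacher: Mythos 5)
Your lower-semicontinuity argument has a genuine gap, and it sits exactly at the phrase ``with $z_0\neq y_0$ or a strict-inequality defect.'' Strict inequalities do not survive limits: if the competing nearest points satisfy $z_k\to z_0=y_0$, then $d(x_k,z_k)-h_k(z_k)< c-h_k(y_k)$ only yields $d(x_0,y_0)\le c$ in the limit, which holds (with equality) in any case, so there is no contradiction. The uniqueness of the closest point below the cut distance (your appeal to Lemmas \ref{lem_distance_test} and \ref{lem_sigma_test}, or the paper's citation of Chavel) disposes only of the branch $z_0\ne y_0$; it says nothing about competitors $z_k\ne y_k$ that collapse onto $y_0$, e.g.\ nearby base points whose modified-normal geodesics almost refocus at $x_k$. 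This degenerate case is the actual crux of the lemma, and the paper needs two further ingredients to handle it: (a) Lemma \ref{lem_orthogonality}, which upgrades the competitor to a geodesic statement --- the minimizing path from $z_j$ to $x_j$ leaves $\Gamma$ with initial velocity $V(h_j)$, so $x_j=\gamma(t_j;z_j,V(h_j))$ with $t_j=d_{h_j}(x_j,\Gamma)<T$; and (b) local injectivity of the map $\alpha(h,r,y)=(h,\gamma(r;y,V(h)))$ near $(0,T,y_0)$, which holds because $T<\sigma_\Gamma(y_0;0)$ makes $(r,y)\mapsto\gamma(r;y,\nu)$ a local diffeomorphism and the derivative of $\alpha$ is block triangular. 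Since both $(t_j,z_j)$ and $(T,y_j)$ converge to $(T,y_0)$ and $\alpha$ sends $(h_j,t_j,z_j)$ and $(h_j,T,y_j)$ to the same point $(h_j,x_j)$, injectivity forces $(t_j,z_j)=(T,y_j)$, contradicting $t_j<T$. Without (a) and (b) --- neither of which appears in your plan --- the argument does not close; pointwise uniqueness at the limit point cannot rule out the collapsing competitors.

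The upper-semicontinuity half of your plan is essentially correct, and in fact simpler than you make it: no discussion of conjugate points or multiple minimizers is needed. The strict defect $d(\gamma(s;y_0,\nu),\Gamma)<s$ for $s\in(\sigma_0,\tau_M(y_0,\nu)]$ is automatic, since $\sigma_0$ is the supremum of the equality set and $d(\gamma(s),\Gamma)\le s$ always; and the defect at one $s$ propagates to all larger $s$ by the triangle inequality $d_h(\gamma(s'),\Gamma)\le d_h(\gamma(s),\Gamma)+(s'-s)$, which is what lets you conclude $\sigma_\Gamma(y_k;h_k)\le\sigma_0+\epsilon$ from a defect at $\sigma_0+\epsilon$. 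The paper runs the same argument in the contrapositive direction: if $\sigma_\Gamma(y_j;h_j)>\sigma_0+\epsilon$ along the sequence, the \emph{equality} $d_{h_j}(\gamma(\sigma_0+\epsilon;y_j,V(h_j)),\Gamma)=\sigma_0+\epsilon$ passes to the limit and contradicts the maximality of $\sigma_0$. Equalities survive limits; that is why the paper's direction is the frictionless one, whereas your ``transport the uniform gap'' version, though workable here, is the pattern that fails when you try to reuse it in the lower-semicontinuity half.
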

\begin{proof}
We prove the semicontinuity by a contradiction, so suppose that there is a sequence 
$((y_j, h_j))_{j=1}^\infty$ converging to $(y_0, 0)$
such that $\liminf_{j \to \infty} \sigma_\Gamma(y_j; h_j) < \sigma_\Gamma(y_0; 0)$.
We denote $h_0 = 0$ and
\begin{align*}
\sigma_j := \sigma_\Gamma(y_j; h_j), 
\quad \tau_j := \tau_M(y_j, V(h_j)),
\quad \text{for $j \ge 0$}.
\end{align*}
As $[0, \sigma_0]$ is compact, we may consider a subsequence and assume 
that $\sigma_j \to \sigma_\infty$. % as $j \to \infty$.
We let $T \in (\sigma_\infty, \sigma_0)$ and define
\begin{align*}
x_j := \gamma(T; y_j, V(h_j)),
\quad
t_j := d_{h_j}(x_j, \Gamma).
\end{align*}
Notice that $x_j$ is well defined for large $j$. Indeed,
the exit time function $\tau_M$ is lower semicontinuous, see e.g. \cite{Helin2010}, whence
\begin{align*}
\liminf_{j \to \infty} \tau_j \ge \tau_0 \ge \sigma_0 > T.
\end{align*}

We denote $x_0 := \gamma(T; y_0, \nu)$.
Then continuity properties of the modified distances imply
$t_j \to d(x_0, \Gamma)$.
Moreover, $T > \sigma_\infty$ implies $t_j < T$ for large $j$,
and $T < \sigma_0$ implies $d(x_0, \Gamma) = T$.
To summarize, we may consider a subsequence and assume that
\begin{align}
\label{ineq_semicont_T}
t_j < \lim_{j \to \infty} t_j = d(x_0, \Gamma) = T < \tau_j.
\end{align}

There is $z_j \in \bar \Gamma$ such that
$d_{h_j}(x_j, z_j) = t_j$.
By considering a subsequence we may assume that $z_j \to z_\infty \in \bar \Gamma$.
We see that $z_\infty$ is a closest point to $x_0$ in $\bar \Gamma$ since
\begin{align*}
d(x_0, z_\infty) 
= \lim_{j \to \infty} d_{h_j} (x_j, z_j) 
= \lim_{j \to \infty} t_j
= T.
\end{align*}
However, $T < \sigma_0$ implies that $z_\infty = y_0$,
see e.g. \cite[pp. 144, 115]{Chavel2006}. % item 2
%
%Let $\gamma$ be a unit speed shortest path from $z_\infty$ to $x_0$.
%Notice that $x_0 \in M^\inter$ since $t_0 < \sigma_0 \le \tau_M(y_0, \nu)$.
%Thus $\gamma$ is a geodesic in a neighborhood of $t_0$.
%We claim that $\dot \gamma(t_0) = \dot \gamma(t_0; y_0, \nu)$.
%Indeed, if this is not the case then for small $\delta > 0$
%\begin{align*}
%d(\gamma(t_0 - \delta), \gamma(t_0 + \delta; y_0, \nu)) 
%< d(\gamma(t_0 - \delta; y_0, \nu), \gamma(t_0 + \delta; y_0, \nu))
%= 2\delta.
%\end{align*}
%Hence for small $\delta > 0$ satisfying $t_0 + \delta < \sigma_0$
%we have the contradiction
%\begin{align*}
%t_0+ \delta 
%&=
%d(\gamma(t_0 + \delta; y_0, \nu), \Gamma) 
%\\&\le 
%d(\gamma(t_0 - \delta), z_\infty) + d(\gamma(t_0 - \delta), \gamma(t_0 + \delta; y_0, \nu))
%\\&< 
%t_0 - \delta + 2\delta = t_0 + \delta.
%\end{align*}
%We have shown that $\dot \gamma(t_0) = \dot \gamma(t_0; y_0, \nu)$.
%
%We have $\gamma((0, t]; y_0, \nu) \subset M^\inter$ since $t < \tau_M(y_0, \nu)$.
%Thus $\gamma(s) = \gamma(s; y_0, \nu)$ for $s \in [0, t]$.
%In particular, $z_\infty = y_0$.
%
By considering a subsequence we may assume that $z_j \in \Gamma$ since $z_j \to y_0 \in \Gamma$.
Lemma \ref{lem_orthogonality} and the inequality (\ref{ineq_semicont_T})
imply that
\begin{align*}
%\label{from_zj_to_xj}
x_j = \gamma(t_j; z_j, V(h_j)).
\end{align*}

As $T < \sigma_0$, the map $(r, y) \mapsto \gamma(r; y, \nu)$ 
is a local diffeomorphism at 
$(T, y_0) \in (0, \infty) \times \Gamma$, 
see e.g. \cite[p. 144, Th. III.2.2]{Chavel2006}.
Moreover, the map
\begin{align*}
&\alpha : C^1(\bar \Gamma) \times (0, \infty) \times \Gamma
\to C^1(\bar \Gamma) \times M, 
\\&
\alpha(h, r, y) := (h, \gamma(r; y, V(h)))
\end{align*}
is a local diffeomorphism at $(0, T, y_0)$ since its
derivative is of the form 
\begin{align*}
\left( \begin{array}{cc}
Id & 0
\\
A & d_{(r, y)}  \gamma(r; y, \nu)|_{r=T, y=y_0}
\end{array} \right),
\end{align*}
where $A : C^1(\bar \Gamma) \to T_{x_0} M$ is a continuous linear operator. 
In particular, there is a local inverse $\beta$ such that
in a neighborhood of $(0, T, y_0)$ we have
$\beta(h, \gamma(r; y, V(h))) = (r, y)$.
Hence for large $j$
\begin{align*}
(t_j, z_j) &= \beta(h_j, \gamma(t_j; z_j, V(h_j)))
= \beta(h_j, x_j) = \beta(h_j, \gamma(T; y_j, V(h_j)))
\\&= (T, y_j),
\end{align*}
which is a contradiction with (\ref{ineq_semicont_T}).
We have shown that $\sigma_\Gamma$ is lower semicontinuous at $(y_0; 0)$.

Let us now suppose that $\sigma_0 < \tau_0$ 
and show upper semicontinuity by a contradiction.
To that end, we suppose that $\sigma_\infty > \sigma_0$. 
%there is
%\begin{align*}
%(y_j, h_j) \to (y_0, 0), \quad \text{as $j \to \infty$},
%\end{align*}
%such that $\limsup_{j \to \infty} \sigma_\Gamma(y_j; h_j) > \sigma_\Gamma(y_0; 0)$.
%We define $\sigma_j$, $j \ge 0$, and $\sigma_\infty$ as above and 
%
We let $\epsilon > 0$ satisfy
$\sigma_0 + \epsilon < \min(\sigma_\infty, \tau_0)$ 
and denote $x_j(\epsilon) = \gamma(\sigma_0 + \epsilon; y_j, V(h_j))$.
Then for large $j$
\begin{align*}
\sigma_0 + \epsilon = d_{h_j}(x_j(\epsilon), \Gamma). 
\end{align*}
In particular, for small $\epsilon > 0$
\begin{align*}
\sigma_0 + \epsilon = \lim_{j \to \infty} d_{h_j}(x_j(\epsilon), \Gamma) 
= d(\gamma(\sigma_0 + \epsilon; y_0, \nu), \Gamma),
\end{align*}
which is a contradiction with the definition of $\sigma_0$.
\end{proof}

\begin{lemma}
\label{lem_transversality}
Let $\Gamma \subset \p M$ be open and let $y_0 \in \Gamma$.
Then there is $(y_j, h_j)_{j=1}^\infty \subset \Gamma \times C^1(\bar \Gamma)$
converging to $(y_0, 0)$ such that the geodesic $s \mapsto \gamma(s; y_j, V(h_j))$ is traversal to $\p M$,
$h_j(y_j) = 0$ and
\begin{align*}
\lim_{j \to \infty} \sigma_\Gamma(y_j; h_j) = \sigma_\Gamma(y_0; 0).
\end{align*}
In particular, 
\begin{align*}
\liminf_{(y, h) \to (y_0, 0)} \sigma_\Gamma(y; h) = \sigma_\Gamma(y_0; 0).
\end{align*}
\end{lemma}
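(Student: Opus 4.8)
Lemma \ref{lem_transversality} asserts that we can approach $(y_0,0)$ by pairs $(y_j,h_j)$ for which the modified geodesic $\gamma(\cdot;y_j,V(h_j))$ is transversal to $\partial M$, while keeping $h_j(y_j)=0$ and $\sigma_\Gamma(y_j;h_j)\to\sigma_\Gamma(y_0;0)$. Let me plan a proof.

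Key setup: we have the normal geodesic from $y_0$, and its cut/exit distance $\sigma_0 := \sigma_\Gamma(y_0;0)$. The endpoint $x_0 := \gamma(\sigma_0; y_0,\nu)$ is either on $\partial M$ (tangential or transversal contact) or at a genuine cut point in $M^\circ$. The whole point of introducing $h_j$ is to tilt the initial direction $V(h_j) = (1-|\grad_{\partial M}h_j|^2)^{1/2}\nu - \grad_{\partial M}h_j$ slightly so as to perturb the geodesic into one meeting $\partial M$ transversally.

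Let me sketch the strategy:

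First I would reduce to perturbing the initial direction. Since $V(h)$ depends on $h$ only through $\grad_{\partial M}h(y)$ at the base point (for the initial velocity) — but $\sigma_\Gamma(y;h)$ depends on $h$ along a whole neighborhood via $d_h(\cdot,\Gamma)$ — I would work with $h_j$ supported near $y_0$, chosen affine-linear in a boundary chart so that $h_j(y_j)=0$ and $\grad_{\partial M}h_j(y_j)$ realizes a prescribed small covector $\omega_j\in T_{y_0}^*\partial M$. This gives a two-parameter (really $(n-1)$-parameter) family of admissible initial directions $V(h_j)$ near $\nu$, and correspondingly a family of geodesics sweeping out a neighborhood of directions around $\nu$ in $S_{y_0}M$. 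The plan is to choose $\omega_j\to 0$ so that the resulting directions avoid the bad (tangential) set.

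Second, and this is the heart: I would argue that the set of initial directions $\xi\in S_{y_0}M$ for which $\gamma(\cdot;y_0,\xi)$ is non-transversal to $\partial M$ somewhere on $[0,\sigma_\Gamma]$ is small — a measure-zero or meager set — by Sard's theorem or a transversality argument applied to the endpoint map / the boundary-contact map. Tangency of a geodesic to $\partial M$ is a codimension-one condition (the constraint that $\dot\gamma(s)\in T\partial M$ at a contact point $\gamma(s)\in\partial M$), so for generic directions the geodesic is transversal wherever it meets $\partial M$. Thus arbitrarily close to $\nu$ there are directions $\xi_j\to\nu$ giving transversal geodesics; I then realize each $\xi_j=V(h_j)$ by the affine construction above. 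The main obstacle will be handling the convergence $\sigma_\Gamma(y_j;h_j)\to\sigma_0$ together with transversality simultaneously: lower semicontinuity from Lemma \ref{lem_Klingenberg_cont} already gives $\liminf\sigma_\Gamma(y_j;h_j)\ge\sigma_0$, so I need the reverse inequality along my chosen sequence.

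For the upper bound I would split into two cases. If $\sigma_0<\tau_M(y_0,\nu)$, then Lemma \ref{lem_Klingenberg_cont} gives full continuity at $(y_0;0)$, so \emph{any} sequence $(y_j,h_j)\to(y_0,0)$ works and I only need to additionally arrange transversality, which the genericity argument supplies. The remaining case $\sigma_0=\tau_M(y_0,\nu)$ (the cut point is on the boundary) is the delicate one: here a transversal perturbation could in principle shorten the exit time discontinuously. I would control this using that $\tau_M$ composed with the transversal exit is locally smooth (transversality makes the exit time a smooth function of initial data near a transversal exit point), and by choosing the perturbation direction $\omega_j$ to tilt \emph{into} the manifold so that $\tau_M(y_j,V(h_j))\ge\sigma_0-o(1)$, giving $\limsup\sigma_\Gamma(y_j;h_j)\le\sigma_0$ via the modified-distance continuity estimate already used in the proof of Lemma \ref{lem_Klingenberg_cont}. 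Combining with the lower semicontinuity yields the limit, and the final $\liminf$ statement over all $(y,h)\to(y_0,0)$ then follows by sandwiching between the lower semicontinuity bound and the sequence just constructed. The principal difficulty I anticipate is making the transversal perturbation and the $\sigma$-continuity compatible precisely at tangential cut points, where both transversality and exit-time behavior are most sensitive.
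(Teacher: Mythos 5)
Your skeleton --- realize tilted initial directions as $V(h_j)$ with $h_j(y_j)=0$ and prescribed $\grad_{\p M}h_j(y_j)$, split into the cases $\sigma_\Gamma(y_0;0)<\tau_M(y_0,\nu)$ and $\sigma_\Gamma(y_0;0)=\tau_M(y_0,\nu)$, and use the lower semicontinuity from Lemma \ref{lem_Klingenberg_cont} for one inequality --- is the same as the paper's, and your treatment of the first case is correct. But the paper does not obtain transversality from a Sard/genericity claim: it invokes \cite[Lem.\ 12]{Helin2010}, which produces a sequence $(y_j,\eta_j)\to(y_0,\nu)$ in $\p_- SM$ whose geodesics are transversal to $\p M$ \emph{and}, crucially, whose exit times satisfy $\tau_M(y_j,\eta_j)\to\tau_M(y_0,\nu)$. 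That exit-time convergence is the nontrivial content: lower semicontinuity of $\tau_M$ already gives $\liminf_j\tau_M(y_j,\eta_j)\ge\tau_M(y_0,\nu)$ for any sequence, and the danger is the opposite jump, namely that a perturbed geodesic misses a tangential boundary contact and $\tau_M$ jumps \emph{up} (the dogbone scenario of Figure 1). Your genericity argument, even if carried out, says nothing about this.

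This becomes a genuine gap precisely in the case you flag as delicate, $\sigma_0:=\sigma_\Gamma(y_0;0)=\tau_M(y_0,\nu)$, where your proposed fix runs backwards. The inequality you need is $\limsup_j\sigma_\Gamma(y_j;h_j)\le\sigma_0$, and since the only a priori bound is $\sigma_\Gamma(y_j;h_j)\le\tau_M(y_j,V(h_j))$, what this requires is an \emph{upper} bound $\limsup_j\tau_M(y_j,V(h_j))\le\sigma_0$. Your plan --- tilt ``into the manifold'' so that $\tau_M(y_j,V(h_j))\ge\sigma_0-o(1)$ --- supplies a \emph{lower} bound on the exit time, which is automatic from lower semicontinuity and implies nothing about $\limsup_j\sigma_\Gamma(y_j;h_j)$; worse, tilting into the manifold is exactly the perturbation that makes the geodesic overshoot the boundary contact and makes $\tau_M$ (and possibly $\sigma_\Gamma$) jump up. Moreover, the local smoothness of the exit time that you invoke presupposes a transversal exit of the \emph{limit} geodesic, which is exactly what fails at a tangential contact. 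The paper closes this case by the sandwich $\sigma_0=\tau_M(y_0,\nu)=\lim_j\tau_M(y_j,V(h_j))\ge\limsup_j\sigma_\Gamma(y_j;h_j)$ together with $\liminf_j\sigma_\Gamma(y_j;h_j)\ge\sigma_0$ from Lemma \ref{lem_Klingenberg_cont}, and the middle equality rests entirely on the exit-time convergence imported from \cite{Helin2010}, which your construction lacks.
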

\begin{proof}
By \cite[Lem. 12]{Helin2010} there is a sequence $(y_j, \eta_j)_{j=1}^\infty \subset \p_- S M$ converging to $(y_0, \nu)$ 
such that $\gamma(\cdot; y_j, \eta_j)$ is transversal to $\p M$ and 
$\tau_M(y_j, \eta_j)$ converges to $\tau_M(y_0, \nu)$ as $j \to \infty$.
We may choose $(h_j)_{j=1}^\infty \subset C^1(\bar \Gamma)$ converging to zero 
such that 
\begin{align*}
h_j(y_j) = 0 
\quad \text{and} \quad 
\grad_{\p M} h_j(y_j) = \eta_j|_{T^* \p M}.
\end{align*}

In the case when $\sigma_\Gamma(y_0; 0) < \tau_M(y_0, \nu)$,
the claim follows immediately from Lemma \ref{lem_Klingenberg_cont}.
Let us consider the case $\sigma_\Gamma(y_0; 0) = \tau_M(y_0, \nu)$.
Then 
\begin{align*}
\sigma_\Gamma(y_0; 0) 
&= 
\tau_M(y_0, \nu) 
= 
\liminf_{j \to \infty} \tau_M(y_j, V(h_j))
\ge
\liminf_{j \to \infty} \sigma_\Gamma(y_j; h_j) 
\\&\ge 
\sigma_\Gamma(y_0; 0).
\end{align*}
Moreover, by considering a subsequence we may assume that 
$\sigma_\Gamma(y_j; h_j)$ converges to $\liminf_{j \to \infty} \sigma_\Gamma(y_j; h_j)$
as $j \to \infty$.
The second claim follows from the first claim and the lower semicontinuity of $\sigma_\Gamma$ at $(y_0; 0)$.
\end{proof}

For open $\Gamma \subset \p M$, $y \in \Gamma$ and $h \in C(\bar \Gamma)$ we define
\begin{align*}
\tilde \sigma_\Gamma(y; h) := \sup \{ t \in (0, \infty);\ \text{$t$ satisfies (ii) of Lemma \ref{lem_distance_test}}\}.
\end{align*}
%If $h \in C^1(\bar \Gamma)$ satisfies (\ref{grad_h_small}) and $h(y) = 0$, then
%$\tilde \sigma_\Gamma(y; h) \ge \sigma_\Gamma(y; h)$.
%Moreover, if $\gamma(\cdot; y, V(h))$ is transversal to $\p M$ then $\tilde \sigma_\Gamma(y; h) = \sigma_\Gamma(y; h)$.

\begin{lemma}\label{lem: det A}
Let $\Gamma \subset \p M$ be open and $y \in \Gamma$. Then
\begin{align*}
\liminf_{(y, h) \to (y_0, 0)} \tilde \sigma_\Gamma(y; h) = \sigma_\Gamma(y_0; 0),
\end{align*}
where the $\liminf$ is taken over all $(y, h) \in \Gamma \times C^1(\bar \Gamma)$ such that $h(y) = 0$.
\end{lemma}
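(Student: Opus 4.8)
The plan is to compare $\tilde\sigma_\Gamma(y;h)$ with the modified distance to a cut point $\sigma_\Gamma(y;h)$ and then to pass to the limit using the semicontinuity results already established. The two quantities differ only in that $\tilde\sigma_\Gamma(y;h)$ is defined through the pointwise-in-$t$ condition (ii) of Lemma \ref{lem_distance_test}, whereas $\sigma_\Gamma(y;h)$ is characterised, via Lemma \ref{lem_sigma_test}, by the same condition holding for \emph{all} smaller radii. I will first show that always $\tilde\sigma_\Gamma(y;h) \ge \sigma_\Gamma(y;h)$, and that equality holds whenever the geodesic $\gamma(\cdot; y, V(h))$ is transversal to $\p M$.

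For the inequality $\tilde\sigma_\Gamma(y;h) \ge \sigma_\Gamma(y;h)$, I would take $h$ small enough in $C^1(\bar\Gamma)$ that (\ref{grad_h_small}) holds and apply Lemma \ref{lem_sigma_test}, implication (i)$\Rightarrow$(ii), with $r = \sigma_\Gamma(y;h)$. This shows that every $t \in (0, \sigma_\Gamma(y;h)]$ satisfies condition (ii) of Lemma \ref{lem_distance_test}, so each such $t$ lies in the set whose supremum defines $\tilde\sigma_\Gamma(y;h)$; hence $\tilde\sigma_\Gamma(y;h) \ge \sigma_\Gamma(y;h)$.

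For equality under transversality, I would argue that every $t$ satisfying (ii) of Lemma \ref{lem_distance_test} obeys $t \le \sigma_\Gamma(y;h)$. Indeed, such a $t$ yields, by Lemma \ref{lem_distance_test}, a point $x$ with $d(x,y) = d_h(x,\Gamma) = t$. First, $t \le \tau_M(y, V(h))$: otherwise a shortest path from $y$ to $x$ would coincide with $\gamma(\cdot;y,V(h))$ up to the exit time, where transversality forces it to meet $\p M$ non-tangentially, contradicting that a shortest path is $C^1$; this is exactly the argument appearing in the proof of Lemma \ref{lem_sigma_test}. Once $t \le \tau_M(y, V(h))$ is known, the uniqueness clause of Lemma \ref{lem_distance_test} gives $x = \gamma(t; y, V(h))$ together with $d_h(\gamma(t;y,V(h)),\Gamma) = t$, so $t$ belongs to the set defining $\sigma_\Gamma(y;h)$ and thus $t \le \sigma_\Gamma(y;h)$. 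Taking the supremum over all admissible $t$ yields $\tilde\sigma_\Gamma(y;h) \le \sigma_\Gamma(y;h)$, hence equality.

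Finally I would pass to the limit. The lower bound $\liminf_{(y,h)\to(y_0,0)} \tilde\sigma_\Gamma(y;h) \ge \sigma_\Gamma(y_0;0)$ follows by combining $\tilde\sigma_\Gamma \ge \sigma_\Gamma$ with the lower semicontinuity of $\sigma_\Gamma$ at $(y_0;0)$ from Lemma \ref{lem_Klingenberg_cont} (restricting the limit to $h(y)=0$ can only increase the $\liminf$). For the matching upper bound I would use the transversal sequence $(y_j,h_j) \to (y_0, 0)$ with $h_j(y_j)=0$ provided by Lemma \ref{lem_transversality}, along which $\tilde\sigma_\Gamma(y_j;h_j) = \sigma_\Gamma(y_j;h_j) \to \sigma_\Gamma(y_0;0)$ by the transversal equality just proved and the convergence in Lemma \ref{lem_transversality}; this gives $\liminf \tilde\sigma_\Gamma \le \sigma_\Gamma(y_0;0)$. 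The two bounds together yield the claim. The main obstacle is the equality under transversality: the pointwise-in-$t$ definition of $\tilde\sigma_\Gamma$ does not a priori control the geodesic beyond $\tau_M(y,V(h))$, and excluding radii past the exit time is precisely where transversality and the $C^1$-regularity of shortest paths are needed.
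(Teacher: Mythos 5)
Your proof is correct and follows essentially the same route as the paper: both establish $\tilde \sigma_\Gamma \ge \sigma_\Gamma$ via Lemma \ref{lem_sigma_test}, obtain the lower bound from the (semi)continuity of $\sigma_\Gamma$, and obtain the matching upper bound by evaluating along the transversal sequence $(y_j, h_j)$ supplied by Lemma \ref{lem_transversality}. The only difference is that you spell out explicitly, via the uniqueness clause of Lemma \ref{lem_distance_test}, why $\tilde \sigma_\Gamma(y_j; h_j) = \sigma_\Gamma(y_j; h_j)$ under transversality, a step the paper leaves implicit in its citation of Lemma \ref{lem_sigma_test}.
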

\begin{proof}
Lemmas \ref{lem_sigma_test} and \ref{lem_transversality} imply %for small enough $h$
\begin{align*}
\liminf_{(y, h) \to (y_0, 0)} \tilde \sigma_\Gamma(y; h) \ge \liminf_{(y, h) \to (y_0, 0)} \sigma_\Gamma(y; h)
= \sigma_\Gamma(y_0; 0).
\end{align*}
Let $(h_j)_{j=1}^\infty$ be as in Lemma \ref{lem_transversality}.
Then by Lemma \ref{lem_sigma_test}
\begin{align*}
\liminf_{(y, h) \to (y_0, 0)} \tilde \sigma_\Gamma(y; h) 
\le \liminf_{j \to \infty} \tilde \sigma_\Gamma(y_j; h_j) 
= \lim_{j \to \infty} \sigma_\Gamma(y_j; h_j)
 = \sigma_\Gamma(y_0; 0).
\end{align*}
\end{proof}

\begin{lemma}
\label{lem_sigma_test_B}
Let $x \in M^\inter$ and let $\rho > 0$ be small enough so that
$B := M(x, \rho)$ is contained in a normal neighborhood of $x$ in $M^\inter$.
For $r > 0$ the following properties are equivalent:
\begin{itemize}
\item[(i)] $r + \rho \le \sigma^B$.
\item[(ii)] For all $t \in (0, r]$, $s \in (0, t)$ and $y \in \p B$, $M(y, t) \not\subset M(B, s)$.
\end{itemize}
\end{lemma}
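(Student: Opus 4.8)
The plan is to translate the statement about $\sigma^B$ into a pointwise statement along the geodesic sphere $\p B$ and then to read it off from Lemmas \ref{lem_sigma_test} and \ref{lem_distance_test}. Write $\sigma_B(y) := \sigma_\Gamma(y)$ for $\Gamma = M(x,\rho)^\inter$ and $y \in \p B$, and for $\xi \in S_x M$ set $\sigma^B_\xi := \max\{s \in (0,\tau_M(x,\xi)] : d(\gamma(s;x,\xi),x) = s\}$, so that $\sigma^B = \min_\xi \sigma^B_\xi$. First I would record the radial identification: for $y = \gamma(\rho;x,\xi)$ one has $\gamma(s;y,\nu) = \gamma(\rho+s;x,\xi)$, and as long as the radial geodesic from $x$ minimizes, $d(\gamma(s;y,\nu),B) = d(\gamma(\rho+s;x,\xi),x)-\rho$; comparing definitions gives $\sigma_B(y) = \sigma^B_\xi - \rho$, hence $\min_{y\in\p B}\sigma_B(y) = \sigma^B - \rho$, exactly as in \eqref{eq_sigma_Bs}. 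Consequently (i) is equivalent to the pointwise statement $r \le \sigma_B(y)$ for every $y \in \p B$.

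The implication (i) $\Rightarrow$ (ii) is then immediate and requires no transversality. If $r \le \sigma_B(y)$ then hypothesis (i) of Lemma \ref{lem_sigma_test}(b) holds for $\Gamma = B$, $h = 0$ and radius $r$ at the point $y$, so its conclusion gives $M(y,t) \not\subset M(B,s)$ for all $t \in (0,r]$ and $s \in (0,t)$; since $y \in \p B$ is arbitrary, (ii) follows.

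For (ii) $\Rightarrow$ (i) I would argue by contraposition: assuming $r+\rho > \sigma^B$ I must produce $y \in \p B$, $t \in (0,r]$ and $s \in (0,t)$ with $M(y,t) \subset M(B,s)$. Choose $\xi_0$ minimizing $\sigma^B_\xi$, put $y_0 = \gamma(\rho;x,\xi_0)$, $\sigma_0 = \sigma_B(y_0) = \sigma^B - \rho < r$, and $\tau_0 = \tau_M(y_0,\nu)$. In the first case $\sigma_0 < \tau_0$, i.e. the cut point along $\gamma(\cdot;y_0,\nu)$ is interior, I pick $t \in (\sigma_0,\min(r,\tau_0)]$; since $t \le \tau_M(y_0,\nu)$, the uniqueness clause of Lemma \ref{lem_distance_test} forces any $z$ with $d(z,y_0) = d(z,B) = t$ to equal $\gamma(t;y_0,\nu)$, which satisfies $d(\cdot,B) < t$ because $t > \sigma_0$. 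Thus no such $z$ exists, and Lemma \ref{lem_distance_test} then yields $M(y_0,t) \subset M(B,s)$ for some $s < t$, contradicting (ii).

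The remaining case $\sigma_0 = \tau_0$, where the minimizing radial geodesic keeps minimizing until it meets $\p M$, possibly non-transversally, is the genuine obstacle; it is precisely the situation of Figure \ref{fig_dogbone}, and here the interior-cut uniqueness argument is unavailable since $\gamma(\cdot;y_0,\nu)$ exits $M$ at time $\sigma_0$. The device I would use is the density of transversal directions: by \cite{Helin2010} there is a sequence $\xi_j \to \xi_0$ in $S_x M$ with $\gamma(\cdot;x,\xi_j)$ transversal to $\p M$ and $\tau_M(x,\xi_j) \to \tau_M(x,\xi_0)$. Setting $y_j = \gamma(\rho;x,\xi_j)$ and $\tau_j = \tau_M(y_j,\nu) = \tau_M(x,\xi_j)-\rho$, the trivial bound $\sigma_B(y_j) \le \tau_j \to \tau_0 = \sigma_0 < r$ gives $\sigma_B(y_j) < r$ for large $j$, while $\gamma(\cdot;y_j,\nu)$ is now transversal. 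Hence the transversal converse ``(ii) $\Rightarrow$ (i)'' of Lemma \ref{lem_sigma_test}(b) applies at $y_j$, and since its (i) fails (as $r > \sigma_B(y_j)$) its (ii) must fail too, yielding $t \in (0,r]$ and $s \in (0,t)$ with $M(y_j,t) \subset M(B,s)$ and contradicting (ii). I expect this non-transversal case to be the main difficulty, exactly because it is the place where the clean uniqueness argument collapses and one must instead perturb the minimizing direction to a transversal one while keeping the cut distance below $r$.
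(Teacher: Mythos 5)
Your reduction of (i) to the pointwise statement $r \le \sigma_B(y)$ for all $y \in \p B$ via (\ref{eq_sigma_Bs}), and your implication (i)$\Rightarrow$(ii) from the easy direction of Lemma \ref{lem_sigma_test}, are exactly the paper's. For (ii)$\Rightarrow$(i) the paper argues differently: it looks not at the direction minimizing the cut distance but at the point $y^* \in \p B$ minimizing the exit time $\tau_M(\cdot,\nu)$. Since $\rho + \tau_M(y^*,\nu) = \min_{\xi \in S_x M}\tau_M(x,\xi) = d(x,\p M)$, the radial geodesic through $y^*$ is a shortest path from $x$ to $\p M$ and therefore meets $\p M$ normally, hence transversally. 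Lemma \ref{lem_sigma_test} then applies at $y^*$ and, given (ii), yields $r \le \sigma_B(y^*) \le \tau_M(y^*,\nu) \le \tau_M(y,\nu)$ for every $y \in \p B$; once $r \le \tau_M(y,\nu)$ holds everywhere, no further transversality is needed, since applying Lemma \ref{lem_distance_test} with its uniqueness clause for each $t \in (0,r]$ gives $\sigma_B(y) \ge r$ at every $y$. Your Case 1 is a correct instance of this mechanism and is fine.

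The gap is in your Case 2. The statement you attribute to \cite{Helin2010} --- a sequence $\xi_j \to \xi_0$ in $S_x M$, with the base point $x$ \emph{fixed}, such that $\gamma(\cdot;x,\xi_j)$ is transversal to $\p M$ and $\tau_M(x,\xi_j) \to \tau_M(x,\xi_0)$ --- is not the statement the paper uses from that reference. In Lemma \ref{lem_transversality} the paper cites \cite[Lem. 12]{Helin2010} for sequences $(y_j,\eta_j) \in \p_- SM$ converging to a boundary normal $(y_0,\nu)$, i.e. both the foot point and the direction are perturbed and the limit direction is normal; fixing an interior base point and perturbing only the direction of an arbitrary geodesic is a different (though plausible) assertion. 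Lower semicontinuity of $\tau_M$ only gives $\liminf_j \tau_M(x,\xi_j) \ge \tau_M(x,\xi_0)$, and the whole difficulty is to produce transversal directions along which the exit time does not jump upward, so this step would need its own proof. Fortunately your Case 2 does not need it: there $\sigma_0 = \tau_0$, and with $z^*$ the exit-time minimizer the chain $\sigma_0 = \min_{z}\sigma_B(z) \le \sigma_B(z^*) \le \tau_M(z^*,\nu) \le \tau_M(y_0,\nu) = \tau_0 = \sigma_0$ collapses to equalities. Hence $z^*$ is itself a $\sigma$-minimizer with $\sigma_B(z^*) < r$, and by the normal-intersection observation above the geodesic $\gamma(\cdot;z^*,\nu)$ is transversal to $\p M$, so Lemma \ref{lem_sigma_test} applied at $z^*$ produces the required violation of (ii) with no perturbation argument at all. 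This is precisely the paper's one-line proof.
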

\begin{proof}
By Lemma \ref{lem_sigma_test} and (\ref{eq_sigma_Bs}) is enough to show that 
$\gamma(\cdot; y, \nu)$ is transversal to $\p M$ if 
$y \in \p B$ satisfies $\tau := \tau_M(y, \nu) = \min_{z \in \p B} \tau_M(z, \nu)$.
Let $\xi \in S_{x} M$ satisfy $\gamma(\rho; x, \xi) = y$.
Then 
\begin{align*}
\gamma(\tau; y, \nu) = \gamma(\tau + \rho; x, \xi)
\end{align*}
is a closest point to $y_0$ on $\p M$.
Thus $\gamma(\cdot; y_0, \nu)$ intersects $\p M$ normally, in particular, it is transversal to $\p M$.
\end{proof}

Summarizing, when we are given $\Lambda_{\Src,\Rec}$, 
we may use Proposition \ref{prop_from_weak_conv_to_rels}
to first determine $\sigma_\Rec$ by Lemma \ref{lem: det A}
and then to reconstruct the subset $(M_\Rec, g)$
by Lemma \ref{lem_distance}
and \cite[Section 4.4.6]{Katchalov2001}.
Analogously, when $B$ is as in Lemma \ref{lem_sigma_test_B}
and we are given $\Lambda_{\Src,B}$ together with $(B, g)$,
we may use Proposition \ref{prop_from_weak_conv_to_rels}
to first determine $\sigma^B$ by Lemma \ref{lem_sigma_test_B}
and then to reconstruct the subset $(M_B, g)$
by Lemma \ref{lem_distance}
and Section \ref{sec_reconstruction_g}.

\section{Global reconstruction of the manifold}
\label{sec_global_reconstruction}

In the previous section we have shown, 
under the assumptions of Proposition \ref{prop_from_weak_conv_to_rels},
that $\Lambda_{\Src,\Rec}$ determines $(M_\Rec, g)$.
In fact, we have described a method to reconstruct $(M_\Rec, g)$
in the boundary normal coordinates. 
We will show next that $\Lambda_{\Src,\Rec}$
determines the interior data $\Lambda_{\Src,B}$
for such a ball $B \subset M_\Rec$ that 
there is $\Gamma \subset \Rec$ and $t_0 > 0$ satisfying 
\begin{align*}
B \subset M(\Gamma, t_0) \subset M_\Rec.
\end{align*}
Let $T > t_0$. 
By the finite speed of propagation for the wave equation (\ref{eq_wave}),
$(M_\Rec, g)$ determines $u^f(T)$ for all $f \in C_0^\infty((T-t_0, T) \times \Gamma)$.
Moreover, as the functions 
\begin{align*}
u^f(T), \quad f \in C_0^\infty((T-t_0, T) \times \Gamma),
\end{align*}
are dense in $L^2(M(\Gamma, t_0))$, we see using Lemma \ref{lem_blago} that 
$\Lambda_{\Src,\Rec}$ determines $u^\psi(T)|_B$ for $\psi \in C_0^\infty((t_0, \infty) \times \Src)$.
By varying $T > t_0$ and noticing that the equation (\ref{eq_wave}) is invariant with respect to translation in time, we see that $u^\psi|_{(0, \infty) \times B}$ is determined for 
$\psi \in C_0^\infty((0, \infty) \times \Src)$. That is, $\Lambda_{\Src,\Rec}$ determines $\Lambda_{\Src,B}$.

Let $x \in M^\inter$ and suppose that $g$ is known in a neighborhood of $x$.
Then we can choose $\rho > 0$ small enough so that $B := M(x, \rho)$
is contained in a known normal neighborhood of $x$ in $M^\inter$.
Let us suppose that $\Lambda_{\Src, B}$ is also known.
We have seen that $\Lambda_{\Src,B}$ determines $(M_B, g)$.
Let $B' \subset M_B$ be a ball. Then there is $t_0 > 0$ such that
\begin{align*}
B' \subset M(B, t_0) \subset M_B.
\end{align*}
An argument analogous with the argument above shows that
$\Lambda_{\Src,B}$ determines $\Lambda_{\Src,B'}$.

Let us now show how local reconstructions $(M_B, g)$ and $(M_{B'}, g)$
can be glued together.
Let $x_1 \in M_B$ and $x_2 \in M_{B'}$.
We have seen that the maps $\Lambda_{\Src,B}$ and $\Lambda_{\Src,B'}$
determine the maps $\Lambda_{\Src,B(x_j, \rho)}$, $j=1,2$,
for small $\rho > 0$.
By Lemma (\ref{lem_blago}) we can compute the inner products
\begin{align*}
&(u^{f_1 - f_2}(T), u^\psi(T))_{L^2(M)} 
\\&\quad\quad= 
(u^{f_1}(T), u^\psi(T))_{L^2(M)} - (u^{f_2}(T), u^\psi(T))_{L^2(M)},
\end{align*}
for $f_j \in C_0^\infty((0, \infty) \times B(x_j, \rho))$,
and $\psi \in C_0^\infty((0, \infty) \times \Src)$.
The set
\begin{align*}
S_\rho := \{s \in (0, \infty);\ M(B(x_1, \rho), s) \subset M(B(x_2, \rho), s)\}
\end{align*}
is determined by Proposition \ref{prop_from_weak_conv_to_rels}. 
Moreover, $x = y$ if and only if $S_\rho = (0, \infty)$ for all $\rho > 0$.
That is, we know how to identify points in $M_B \cap M_{B'}$.
In particular, we can reconstruct the transition functions,
whence we have constructed $(M_B \cup M_{B'}, g)$.

Let us consider the collection
\begin{align*}
\mathcal M := \{B;\ &\text{$B \subset M^\inter$ is a ball,}
\\&\text{$\Lambda_{\Src,\Rec}$ determines $(M_B, g)$ and $\Lambda_{\Src,B}$}\}.
\end{align*}
We have shown that $\mathcal M$ is nonempty and that 
\begin{align*}
\text{if $x \in M_B$ and $B \in \mathcal M$ then $M(x, \rho) \in \mathcal M$ for small $\rho >0$.}
\end{align*}
In particular, the open set
\begin{align*}
U := \bigcup_{B \in \mathcal M} M_B \subset M^\inter
\end{align*}
is nonempty. We will show next that it is closed in $M^\inter$.
Let $x \in M^\inter$ and let $x_j \in U$, $j \in \N$,
converge to $x \in M^\inter$. Then there is a uniformly normal neighborhood $W$ of $x$ and $j \in \N$
such that $x_j \in W$, see e.g. \cite[Lem. 5.12]{Lee1997}.
For small $\rho > 0$, $B' := M(x_j, \rho) \in \mathcal M$ since
$x_j \in M_B$ for some $B \in \mathcal M$.
As $W$ is uniformly normal, we have $W \subset M_{B'}$. 
Hence $x \in U$ and we have shown that $U$ is closed in $M^\inter$.
As $M^\inter$ is assumed to be connected, we have $U = M^\inter$.

We have shown that $(M^\inter, g)$ is determined by $\Lambda_{\Src,\Rec}$.
The smooth Riemannian structure allows us to recover also the closure $(M, g)$, 
see e.g. \cite[p. 2116]{Isozaki2010}.  This proves Theorem \ref{thm_main}.
\smallskip

{\em Acknowledgements.}
The research was partly supported by Finnish Centre of Excellence in Inverse Problems Research,
Academy of Finland project COE 250215.
L.O. was partly supported also by European Research Council advanced grant 400803.

%\vspace{1cm}
%{\em Acknowledgements.}
%The research was partly supported by Finnish Centre of Excellence in Inverse Problems Research,
%Academy of Finland COE 213476 and partly by Finnish Graduate School in Computational Sciences.

\bibliographystyle{abbrv} % apa abbrv
%\bibliography{\latexpath/main}
\bibliography{main}
\end{document}